\renewcommand{\comment}[1]{}
\newcommand{\us}[1]{{\upshape{#1}}}
\newcommand{\bp}{\begin{pmatrix}}
\newcommand{\ep}{\end{pmatrix}}
\newcommand{\be}{\begin{equation}}
\newcommand{\ee}{\end{equation}}
\newcommand{\bs}{\begin{split}}
\newcommand{\es}{\end{split}}
\newcommand{\bc}{\begin{center}}
\newcommand{\ec}{\end{center}}
\newcommand{\ed}{{\rm d}}
\newcommand{\w}{{\mathchoice{\,{\scriptstyle\wedge}\,}{{\scriptstyle\wedge}}    
      {{\scriptscriptstyle\wedge}}{{\scriptscriptstyle\wedge}}}}                
\newcommand{\lhk}{\mathbin{\hbox{\vrule height1.4pt width4pt depth-1pt          
             \vrule height4pt width0.4pt depth-1pt}}}  
\newcommand{\ol}{\overline}
\renewcommand{\Re}{\operatorname{Re}}
\renewcommand{\Im}{\operatorname{Im}}
\newcommand{\mo}{\sqrt{-1}}
\newcommand{\om}{\omega}
\newcommand{\Om}{\Omega}
\newcommand{\I}[1]{{\rm I}^{(#1)}}
\newcommand{\R}{\mathbb R}
\newcommand{\C}{\mathbb C}
\newcommand{\Z}{\mathbb Z}
\newcommand{\s}[1]{{\mathbb S}^{#1}}
\newcommand{\mcc}{\mathcal C}
\newcommand{\mce}{\mathcal E}
\newcommand{\mch}{\mathcal H}
\newcommand{\mci}{\mathcal I}
\newcommand{\mcl}{\mathcal L}
\newcommand{\mcs}{\mathcal S}
\newcommand{\vp}{\varphi}
\newcommand{\del}{{\partial}}
\newcommand{\delb}{\bar{\partial}}
\newcommand{\ddv}[1]{\frac{\partial}{\partial {#1}}}
\newcommand{\dd}[2]{\frac{\partial {#1}}{\partial {#2}}}
\newcommand{\M}[1]{M^{(#1)}}
\newcommand{\mcip}[1]{{\mci}^{(#1)}}
\newcommand{\wt}{{\rm {wd}}}
\newcommand{\ob}[2]{\ol \Omega^{{#1},{#2}}}
\newcommand{\obf}[3]{\ol \Omega_{#3}^{{#1},{#2}}}
\newcommand{\et}[3]{E_{#3}^{{#1},{#2}}}
\newcommand{\ub}{\ol{u}}
\newcommand{\zb}{\ol{z}}
\newcommand{\zetab}{\ol{\zeta}}
\newcommand{\etab}{\ol{\eta}}
\newcommand{\omb}{\ol{\om}}
\newcommand{\hb}{{\bar H^1}}
\theoremstyle{plain}
\newtheorem{thm}{Theorem}[section]
\newtheorem{lem}[thm]{Lemma}
\newtheorem{cor}[thm]{Corollary}
\newtheorem{prop}[thm]{Proposition}
\newtheorem{rem}[thm]{Remark}
\theoremstyle{definition}
\newtheorem{defn}{Definition}[section]
\newtheorem{exam}[thm]{Example}
\begin{document}
\title[higher-order conservation laws]{higher-order conservation laws for the non-linear Poisson equation via characteristic cohomology}
\author{Daniel Fox}
\address{24-29 St Giles', Mathematics Institute, University of Oxford, Oxford, OX1 3LB, UK}
\email{foxd@maths.ox.ac.uk}
\author{Oliver Goertsches}
\address{Mathematisches Institut, Universit\"at zu K\"oln, Weyertal 86-90, 50931 K\"oln, Germany}
\email{ogoertsc@math.uni-koeln.de}
\subjclass[2000]{35J05 58A15 58H10}
\date{\today}
\keywords{Conservation Laws, Characteristic Cohomology, Soliton,  Integrable Systems, Exterior Differential Systems}
\begin{abstract}
We study higher-order conservation laws of the non-linearizable elliptic Poisson equation
$
\frac{{\partial}^2 u}{\partial z \partial \zb} = -f(u)
$
as elements of the characteristic cohomology of the associated exterior differential system.  The theory of characteristic cohomology determines a normal form for differentiated conservation laws by realizing them as elements of the kernel of a linear differential operator.  The $\s{1}$--symmetry of the PDE leads to a normal form for the undifferentiated conservation law as well. 

We show that for higher-order conservation laws to exist, it is necessary that $f$ satisfies a linear second order ODE.  In this case, an at most real two--dimensional space of new conservation laws in normal form appears at each even prolongation.  When $f_{uu}=\beta f$ this upper bound is attained and the work of Pinkall and Sterling \cite{Pinkall1989} allows them to be written explicitly.

We relate higher-order conservation laws to generalized symmetries of the exterior differential system by identifying their generating functions.   This Noether correspondence provides the connection between conservation laws and the canonical Jacobi fields of Pinkall and Sterling.  
\end{abstract}
\maketitle

%-----------------------------------------------------------------------------%
\tableofcontents

%-----------------------------------------------------------------------------%
%\listoftables   %if you have any tables

%-----------------------------------------------------------------------------%
%\listoffigures  %if you have any figures

%-----------------------------------------------------------------------------%
%
% MAIN BODY OF PAPER
%
% replace TEXT in \chapter{TEXT} with actual chapter name
% replace FILE in \input{FILE} with name of text file
%   containing the given chapter, eg. for the introduction one could
%   have FILE = intro IF stored in intro.tex (.tex extension is assumed!).
%
%-----------------------------------------------------------------------------%

\section{Introduction}
A select set of elliptic Poisson equations 
\be\label{eq:fGordon}
\frac{{\partial}^2 u}{\partial z \partial \zb}  = -f(u),
\ee
where $u:\C \to \R$, is central in the study of submanifold geometries:  When  
\be\label{eq:Potentials}
f(u)=
\begin{cases}
-\frac{(\epsilon+\delta^2)}{4}\sinh(2u) \;\;\;&{\rm if}\;\; \epsilon+\delta^2>0\\
-\frac{(\epsilon+\delta^2)}{4}\cosh(2u) \;\;\;&{\rm if}\;\; \epsilon+\delta^2<0\\
e^{-2u} \;\;\;\;\;&{\rm if}\;\; \epsilon+\delta^2=0 
\end{cases}
\ee
then Equation \eqref{eq:fGordon} arises as the Gauss equation for a surface of constant mean curvature $-2\delta$ in a three dimensional space form of constant sectional curvature $\epsilon$.  When 
\[f(u)=e^{-2u}-e^{u},\] 
Equation \eqref{eq:fGordon} is the Gauss equation for a special Legendrian surface in $\s{5}$.  In all of these cases, the metric on the surface is locally given by $e^{2u}\ed z \circ \ed \zb$.  Once one has a solution $u(z,\zb)$ of \eqref{eq:fGordon}, the map of the surface into the space form can be recovered by solving a system of ODE.   It is also well known that the hyperbolic equation $u_{xt}=\sin(u)$, where $x,t$ are coordinates on $\R^2$, is the Gauss equation for surfaces in $\R^3$ with Gauss curvature equal to $-1$.   

For all of the potentials $f(u)$ listed above, Equation \eqref{eq:fGordon} is often referred to as a \emph{soliton  equation} or \emph{integrable system} and is known to have many special properties, including a loop group formulation, infinitely many conserved quantities, and a description of solutions using algebraic geometry (the spectral curve).  The literature on soliton equations is fascinating but also sprawling and tangled. 

Integrable systems of the form \eqref{eq:fGordon} underlie the simplest cases of primitive maps from Riemann surfaces into $k$-symmetric spaces \cite{Burstall1995}.  For this perspective the reader might consult the articles by Uhlenbeck \cite{Uhlenbeck1989}, Pinkall and Sterling \cite{Pinkall1989}, Hitchin \cite{Hitchin1990}, Bobenko \cite{Bobenko1991}, Burstall \cite{Burstall1992}, Bolton et al.~\cite{Bolton1995}, Dai and Terng \cite{Dai2000}, McIntosh \cite{McIntosh2001}, and the references within.   Hyperbolic equations of the form $u_{tx}=f(u)$ fit into the hierarchies developed by Terng and Uhlenbeck \cite{Terng1980,Terng2000}.  All of these references use the fact that soliton equations can be phrased in terms of flat connections on a Riemann surface.  

A markedly different approach using recursion operators was initiated by Lenard (a private communication cited in \cite{Gardner1974}) and Olver \cite{Olver1977,Olver1993}, and later developed and formalized by, for example, Guthrie \cite{Guthrie1994}, Dorfman \cite{Dorfman1993}, and Sanders and Wang \cite{Sanders2001}.   

Yet another approach to investigating integrable systems is through the theory of characteristic cohomology developed by Bryant and Griffiths \cite{Bryant1995}.  They cite Vinogradov (see the references in \cite{Bryant1995}) as their main influence, but the theory of characteristic cohomology, and in particular its formulation in the special case of Euler--Lagrange systems, is also closely related to the work of Shadwick using the Hamilton--Cartan formalism  (\cite{Shadwick1980} and the references within) and the work of Olver \cite{Olver1993}. 

Thus far, the theory of characteristic cohomology has mostly been used as a method for classifying partial differential equations, or more generally, exterior differential systems (EDS).  In \cite{Bryant1995,Bryant1995a,Bryant1995b,Bryant1995c,Clelland1997,Clelland1997a,Wang2004} scalar parabolic and hyperbolic PDE for 2 and 3 independent variables are classified  (using the method of equivalence and the characteristic cohomology) in terms of the dimension of the space of conservation laws.  Bryant, Griffiths, and Hsu  \cite{Bryant1995,Bryant1995b,Bryant1995c,Bryant1995d} make many interesting suggestions for other ways in which it might be used, including, for example, to study boundaries of integral manifolds and to study singularities.  Motivated by this the first author introduced an elementary approach to studying boundaries of integral manifolds using conservation laws in \cite{Fox2009}. 

The references to the literature given above are by no means exhaustive or even representative.  They were highlighted to give examples of other approaches that turn out to have close links to the theory of characteristic cohomology.   It is not clear, for example, how the existence of hydrodynamic reductions (see \cite{Ferapontov2006} and the references within) relate to the existence of conservation laws.  No doubt there are many more approaches and many more connections to be made between the various techniques in the literature. 

In \cite{Bryant2003} (Proposition 4.6) it was shown that the non-linear Poisson equation $\Delta u = f(u)$, where $u: \R^n \to \R$ and $\Delta$ is the Laplacian, admits no non-classical conservation laws if $n \geq 3$ and $f_{uu} \neq 0$.  On the other hand, the class of equations $\Delta u = f(u)$ with $n=2$ and $u$ and $f(u)$ vector valued encompass the Toda equations, which are known to be integrable \cite{Bolton1995}.  It does not appear to be known if higher-order conservation laws exist for the non-linear Poisson equation when $n \geq 3$ and $u$ and $f(u)$ are vector valued. 

In this article we study the (possibly infinite dimensional) space of conservation laws of equations of the form \eqref{eq:fGordon} using the characteristic cohomology.   We show that for there to exist higher-order conservation laws, it is necessary that $f$ satisfies a linear second order ODE.  We give a complete and explicit description of the conservation laws in terms of the characteristic cohomology in the case that $f_{uu}=\beta f$ and $f$ doesn't satisfy a linear ODE. We find that in this case the conservation laws for \eqref{eq:fGordon} are equivalent to those studied by Olver in the hyperbolic case \cite{Olver1977}, though his characterization is not complete because he doesn't prove that the necessary recursion operator is always well defined, nor does he prove that the method would produce the complete set of conservation laws.   There is also some overlap with the work of Dodd and Bullough \cite{Dodd1977}, though they also do not address the issue of completeness. 

The conservation laws turn out to be equivalent to the canonical Jacobi fields of \cite{Pinkall1989} and thus to the formal Killing fields of \cite{Burstall1993}.  This is not surprising given the Noether correspondence between generalizaed symmetries and conservation laws (see Section \ref{sec:Symmetries}).    In a future article we will describe how the characteristic cohomology can be used to recapture the notion of finite type solutions \cite{Pinkall1989,Burstall1993}.  We also hope to elaborate on the relationship between conservation laws and formal/polynomial Killing fields for primitive map systems and to study the structure of the characteristic cohomology on solutions with compact domains.

We conclude this section with a sketch of the remainder of the article.   In Section \ref{sec:EDS} we reformulate \eqref{eq:fGordon} as an exterior differential system \eqref{eq:EDS} and present the structure equations for the $k^{th}$ prolongation $(\M{k},\mci^{(k)})$, allowing $k=\infty$.   An $\s{1}$--symmetry of the PDE leads to an $\s{1}$--symmetry of  $(\M{k},\mci^{(k)})$ and to the notion of weighted degree for functions and differential forms.  We also introduce an almost complex structure $J$ on a codimension--$1$ subbundle of $T^*\M{k}$ which leads to $\del$ and $\delb$ operators.

In Section \ref{sec:CharacteristicCohomology} we present the basic definition of classical and higher-order conservation laws for an EDS, in both their differentiated and undifferentiated forms.  The classical conservation laws for \eqref{eq:EDS} are presented in Section \ref{sec:Classical}.  In Section \ref{sec:FirstApproximation} we use the general theory \cite{Bryant1995} to obtain the first approximation to the (differentiated) conservation laws.  In Section \ref{sec:AlgebraicForm} we refine the first approximation, obtaining an exact formula for differentiated conservation laws in normal form in terms of a generating function which is a solution to an (overdetermined) system of linear PDE, Equations \eqref{eq:NoMixing} and \eqref{eq:ED}.  Complicated calculations that would be necessary to directly verify that this formula does in fact convert solutions of \eqref{eq:NoMixing} and \eqref{eq:ED} into conservation laws (i.e.~to show that the thus defined differential forms are closed) are circumvented by studying (weighted) homogeneous conservation laws in Section \ref{sec:Homogeneous}.  The $\s{1}$--symmetry of the EDS allows one to produce  from the differentiated conservation laws  a normal form for undifferentiated conservation laws -- something that has not appeared in the general theory but is likely to be generally applicable to systems that have a gauge symmetry.\footnote{See Section \ref{sec:Conclusion}.}

In Section \ref{sec:Existence} we use the normal form of undifferentiated conservation laws to show that any solution to  \eqref{eq:NoMixing} and \eqref{eq:ED} defines a nontrivial conservation law.  Furthermore, we show that there is an at most one--dimensional complex space of solutions of  \eqref{eq:NoMixing} and \eqref{eq:ED} for each odd weighted degree, none of nonzero even weighted degree, and that these solutions are either `holomorphic' or `anti-holomorphic' polynomials in the derivatives $\frac{\partial^i u}{\partial z^i}$.  

In Section \ref{sec:fuu=betaf} we investigate the space of solutions of  \eqref{eq:NoMixing} and \eqref{eq:ED} under certain assumptions on $f$. We prove that if $f$ does not satisfy a linear second order ODE, no higher-order conservation laws exist. When $f_{uu}=\beta f$ and $f$ does not satisfy any first--order ODE, we use the work of Pinkall and Sterling \cite{Pinkall1989} to produce the complete set of generating functions, and hence the complete (infinite dimensional) space of conservation laws. We also provide examples of higher-order conservation laws for the case when $f_{uu}=\alpha f_u + 2 \alpha^2 f$.  In this case a coordinate change of \eqref{eq:fGordon} transforms it to the Tzitzeica equation $ u _{z \zb}=e^u- e^{-2u}$.

In Section \ref{sec:Symmetries} we show that generalized symmetries of $(\M{\infty},\mci^{(\infty)})$ are determined by generating functions that are solutions to \eqref{eq:ED}, though they need not satisfy \eqref{eq:NoMixing}.  This leads to a limited version of Noether's theorem which explains the relationship between conservation laws and the canonical Jacobi fields of Pinkall and Sterling \cite{Pinkall1989}. Section \ref{sec:Conclusion} contains some concluding remarks.\\

{\it Acknowledgments:} The authors would like to thank Dominic Joyce for many helpful conversations and Jenya Ferapontov for pointing out the reference \cite{Ziber1979}.  The work on this article began when they were at UC Irvine and the second author was supported by a DAAD postdoctoral scholarship. They wish to thank UC Irvine and Chuu-Lian Terng for their hospitality.  While finishing this work the first author was at Oxford University, supported by National Science Foundation grant OISE-0502241.  He thanks Oxford University and Dominic Joyce for their hospitality.

\section{The EDS and its prolongations}\label{sec:EDS}
To begin, we encode the PDE as an exterior differential system (EDS) with independence condition.\footnote{For a basic introduction to EDS see \cite{Bryant1991} or \cite{Ivey2003}.} Recall that an {\bf exterior differential system} consists of a smooth manifold $M$ and a homogeneous differential ideal $\mci \subset\bigoplus_p  \Om^p(M,\C)$.  An {\bf integral manifold} of $(M,\mci)$ is an immersed submanifold $\iota:N \to M$ such that $\iota^*(\mci)=0$.  If the ideal is generated by forms $\alpha_i $ (and their exterior derivatives since it is a \emph{differential} ideal) we will write $\mci= \langle\alpha_i\rangle$.  For any set of $1$-forms $\beta_i\in \Om^1(M,\C)$, we use $\{\beta_i\} \subset \Om^1(M,\C)$ to denote the subbundle they span.  If $\alpha \in \mci$ is a complex valued differential form then by $\iota^*(\alpha)=0$ we mean that both the real and imaginary parts pull back to the real manifold $N$ to be zero.  

In order to encode \eqref{eq:fGordon} as an EDS, let $M=\C^2 \times \R$ have coordinates $(z,u_0,u)$ and define the differential forms
\begin{align*}
\zeta&=\ed z\\ 
\om_1&=\ed u_0 +f \zetab\\
\eta_0&=\ed u - u_0 \zeta - \ub_0 \zetab\\
\psi&=\Im(\zeta \w \om_1)=-\frac{\mo}{2}(\zeta \w \om_1-\zetab \w \omb_1).
\end{align*}
The reader may recognize $M$ as the first jet space of maps $u:\C \to \R$. The desired differential ideal is $\mci=\left\langle \eta_0, \psi  \right\rangle $.  We calculate that 
\begin{align*}
\ed \eta_0 &=2\Re(\zeta \w \om_1)=\zeta \w \om_1 + \zetab \w \omb_1 \\
\ed \psi &=-{\mo} f_u \eta_0 \w \zeta \w \zetab.
\end{align*}
Thus the differential ideal can be expressed as 
\[
\mci=\langle \eta_0, \zeta \w \om_1 \rangle.
\]
One checks that a surface $\iota:\C \to M$ for which $\iota^*(\zeta \w \zetab) \neq 0$ and $\iota^*(\eta_0)=0$ is the $1$--jet of a function $u:\C \to \R$ with $u_0=\dd{u}{z}$ and $\ub_0=\dd{u}{\zb}$.  If in addition $\iota^*(\psi)=0$ then the function $u(z,\zb)$ is a solution to \eqref{eq:fGordon}. Thus solutions to \eqref{eq:fGordon} correspond to integral surfaces $(N,\iota)$ such that $\iota^*(\zeta \w \zetab ) \neq 0$. 
 
The goal of this article is to study the conservation laws of the EDS
\be\label{eq:EDS}
(M,\mci),\;\;{\rm where}\;\; M=\C^2 \times \R\;\; {\rm and}\;\; \mci=\langle \eta_0, \psi \rangle,
\ee
\emph{and} its prolongations.   This EDS is involutive with Cartan characters\footnote{Again, see \cite{Bryant1991,Ivey2003} for the basics of EDS.} $s_0=1,s_1=2,s_2=0$.  

We outline the process of the first prolongation.  If $\iota:N \to M$ is an integral manifold then its tangent space at $\iota(n)$ is a real $2$-plane $\iota_*(T_n N) \subset T_{\iota(n)}M$ on which the ideal pulls back to be zero.  Any real $2$-plane $E \subset T_{\iota(n)}M$ on which $\zeta \w \zetab \neq 0$ is defined by relations\footnote{We are using the notation $E^*(\eta_0)$ to indicate $\eta_0$ pulled back to the plane $E$.}
\begin{align*}
E^*(\eta_0)&=c_1 \zeta +\ol{c}_1 \zetab\\
E^*(\om_1)&=c_2 \zeta + c_3 \zetab
\end{align*}
for some complex numbers $c_1,c_2,c_3$.  The ideal $\mci=\langle \eta_0,\zeta \w \om_1 \rangle$ vanishes on $E$ if and only if $c_1=c_3=0$.  Thus the space of possible tangent planes to integral manifolds is parametrized by one complex number, which we will call $u_1$, via the conditions $E^*\eta_0=0$ and $E^*\om_1=u_1 \zeta$.  

Let $\M{1}=M \times \C$ and let $u_1$ be a holomorphic coordinate on $\C$.  Define the complex one form $\eta_1=\om_1-u_1 \zeta$ and the subbundle $\I{1}=\{ \eta_0, \eta_1, \etab_1 \} \subset \Om^1(\M{1},\C)$ which generates a differential ideal $\mcip{1}$.  The new system $(\M{1},\I{1})$ is the first prolongation of $(M,\mci)$ with respect to the independence condition $\zeta \w \zetab \neq 0$.   Thus we construct the prolongation by adjoining a new coordinate parametrizing the possible tangent spaces to integral manifolds and introducing tautological $1$--forms that vanish on potential tangent planes to integral manifolds. 

So what is the meaning of $u_1$?  It contains the new second order information of $u(z,\zb)$:  The vanishing of $\eta_0=\ed u - u_0 \zeta - \ub_0 \zetab$ implies that $u_0=\dd{u}{z}$.  The vanishing of $\eta_1=\ed u_0 -u_1 \zeta +f \zetab$ implies that $u_1=\dd{u_0}{z}$ and $-f=\dd{u_0}{\zb}$.  The first tells us that $u_1=\frac{\partial^{2}u}{\partial z^{2}}$ -- the new second derivative information on $u$ -- and the second of these recaptures the PDE condition that was encoded in the vanishing of $\psi$.  
\comment{ Any two plane $E \subset T_mM$ on which $ \zeta \w\ \zetab \neq 0 $ and on which the ideal $\mci$ pulls back to be zero is defined by the conditions
\begin{align*}
E^*(\eta_0)&=0\\
E^*(\eta_1)&=0
\end{align*}
for some complex number $u_1$.}

Now using the fact that $\ed \eta_1$ must vanish on solutions of $(\M{1},\mci^{(1)})$, one can find the possible tangent planes of solutions of $(\M{1},\mci^{(1)})$ and in the same way as before construct the second prolongation.  Let $\M{k}$ denote the $k^{th}$-prolongation.  It is not hard to see that $\M{k+1}=\M{k} \times \C$ and we will always use $u_{k+1}$ for the new holomorphic coordinate on $\M{k+1}$.  Furthermore, on a (real) two-dimensional integral manifold $\iota:N \to M$ for which $\iota^*(\zeta \w \zetab) \neq 0$, 
\[
\iota^*(u_i)=\frac{\partial^{i+1}u}{\partial z^{i+1}}.
\]

By calculating the first few prolongations one is motivated to define complex functions and forms
\bc
\begin{tabular}{ll}
&$\om_{i+1}=\ed u_{i}+T^{i}\zetab$\\
$T^0=f$&$\eta_0=\ed u -u_0\zeta-\ub_0\zetab$ \\
$T^{i+1}=\sum_{j=0}^{i}{i \choose j}u_{i-j}T^j_u $&$ \eta_{i+1}=\om_{i+1}-u_{i+1} \zeta$\\
&$\tau^i=\sum_{j=0}^{i}{i \choose j} T^j_u \eta_{i-j}$.\\
\end{tabular}
\ec
%In fact, if we let $v=u_0 \dd{\;}{u}+\sum_{i=0}^{\infty}u_{i+1}\dd{\;}{u_{i}}$ then $T^i=v^i(f)$.  
The real and imaginary parts of $\zeta,\eta_0, \dots,\eta_{k},\om_{k+1}$ form a coframe of $\M{k}$ and
\[
\I{k}=\{\eta_0,\eta_{1},\etab_{1},\ldots ,\eta_{k},\etab_{k} \} \subset \Om^1(\M{k},\C)
\]
generates the ideal $\mci^{(k)}$. The vector fields on $\M{k}$ dual to this coframe are
\bc
\begin{tabular}{lll}
$e^k_{-1}=\ddv{z}+u_0\ddv{u}+\sum_{i=0}^{k-1} u_{i+1} \ddv{u_{i}}-\sum_{i=0}^{k}\ol{T}^{i}\ddv{\ub_{i}}$ & & $\zeta$ \\
$e_0= \ddv{u}$& $\longleftrightarrow$& $\eta_0$\\
$e_i=\ddv{u_{i-1}} \;\;\; i=1 \ldots k+1$& &$\eta_i \;\; (i=1 \ldots k),\;\;\;\om_{k+1}$\\
\end{tabular}
\ec
and their complex conjugates.\footnote{Although the notation $e^k_0$ and $e^k_i$ would be more correct because, for example, $e^k_0$ and $e^{k'}_0$ are vector fields on different manifolds, we drop the indexing of the prolongation because the same formula holds and there are natural inclusions and surjections between $\M{k}$ and $\M{k'}$ which identify the corresponding vector fields.  We leave the superscript on $e^k_{-1}$ because this vector field does change from prolongation to prolongation.}

To compute the structure equations we will need
\begin{lem}\label{lem:tidentities} For $i,j\geq 0$ we have
\begin{enumerate}
\item $T^i=(e^k_{-1})^i f$ for $k\geq i$
\item $T^{i+j+1}_{u_j}={i+j+1 \choose i} T^i_u$.
\end{enumerate}
\end{lem}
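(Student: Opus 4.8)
The plan is to recognize the operator $D:=e^k_{-1}$ as a total-derivative operator and to exploit the fact that each $T^n$ is \emph{holomorphic}, by which I mean it depends on $u,u_0,\dots,u_{n-1}$ only and involves none of the $\ub_i$ nor $z$ explicitly; this last point is an immediate induction on the defining recursion. On such functions the terms $\ddv{z}$ and $-\sum_i \ol{T}^i\ddv{\ub_i}$ of $e^k_{-1}$ annihilate everything, so for $k\geq n$ the operator acts as $D=u_0\,\del_u+\sum_{l\geq 0}u_{l+1}\,\del_{u_l}$, it preserves holomorphicity, and it satisfies $Du_m=u_{m+1}$. The two structural facts I would extract first are the commutator identities $[\del_u,D]=0$ and, more generally, $[\del_{u_j},D]=\del_{u_{j-1}}$ (with the convention $u_{-1}:=u$), both of which follow at once from the displayed form of $D$ because its coefficients $u_0,u_{l+1}$ are independent of the variable being differentiated.

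For part (1) I would show that the sequence $S^n:=D^nf$ satisfies the same recursion and initial condition as $T^n$, whence $S^n=T^n$. The initial condition $S^0=f=T^0$ is clear. For the inductive step I assume $S^n=\sum_{m=0}^{n-1}\binom{n-1}{m}u_m\,\del_u S^{n-1-m}$ and apply $D$; using $Du_m=u_{m+1}$ together with $[\del_u,D]=0$ (so that $D\del_u S^{n-1-m}=\del_u S^{n-m}$) turns the result into two sums whose overlap is resolved by Pascal's rule $\binom{n-1}{m-1}+\binom{n-1}{m}=\binom{n}{m}$, giving exactly $\sum_{m=0}^{n}\binom{n}{m}u_m\,\del_u S^{n-m}$. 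Since $\del_u S^{n-m}=S^{n-m}_u$, this is the defining recursion for $T^{n+1}$ (after reindexing $j\mapsto n-m$ in the statement), so part (1) follows by induction.

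For part (2) I would iterate the commutator $[\del_{u_j},D]=\del_{u_{j-1}}$ to obtain the operator identity $\del_{u_j}D^{n}=\sum_{r=0}^{n}\binom{n}{r}D^{n-r}\del_{u_{j-r}}$, again a one-line induction resolved by Pascal's rule. Applying this with $n=i+j+1$ to $f$, and noting that $f$ depends on $u=u_{-1}$ alone so that $\del_{u_{j-r}}f$ survives only for $r=j+1$, yields $T^{i+j+1}_{u_j}=\del_{u_j}D^{i+j+1}f=\binom{i+j+1}{j+1}D^{i}f_u$. Finally $\binom{i+j+1}{j+1}=\binom{i+j+1}{i}$, and $D^if_u=\del_u D^if=\del_u T^i=T^i_u$ by $[\del_u,D]=0$ and part (1); this is the claimed formula.

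The only genuinely delicate point is the holomorphicity observation together with the resulting collapse of $e^k_{-1}$: it is what makes the $\ub_i$- and $z$-terms disappear, what guarantees $[\del_u,D]=0$ (the conjugated potentials $\ol{T}^i$ do depend on the real variable $u$, so this commutator would fail on non-holomorphic functions), and what lets me freely commute the partials in both inductions. Everything else is bookkeeping that reduces to Pascal's rule, so I expect no serious obstacle beyond keeping the index ranges and the convention $u_{-1}=u$ consistent.
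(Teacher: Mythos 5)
Your proof is correct. It rests on the same two facts as the paper's: the commutation relation $[\partial_{u_j}, e^k_{-1}] = \partial_{u_{j-1}}$ (which the paper writes as $e_{j+1}e^k_{-1} = e^k_{-1}e_{j+1} + e_j$) and the observation that the $T^i$ are holomorphic, so the $\partial_{\ub_i}$-terms of $e^k_{-1}$ act trivially and $[\partial_u, e^k_{-1}]$ can be treated as zero (the paper phrases this as $[e^k_{-1}, e_0]$ lying in the span of the $e_{\ol{i}}$, which annihilate every $T^r$). The genuine difference is the organization of part (2): the paper inducts on $i$ directly on the statement, computing $T^{i+j+1}_{u_j} = (e^k_{-1}e_{j+1}+e_j)T^{i+j}$, applying the inductive hypothesis to both summands, and closing with Pascal's rule, so its argument for (2) is intertwined with (1) and with the $T^i$ themselves; you instead establish the self-contained operator identity $\partial_{u_j}D^n = \sum_{r=0}^n \binom{n}{r}D^{n-r}\partial_{u_{j-r}}$ by iterating the commutator, then specialize to $n=i+j+1$ acting on $f$, where only the $r=j+1$ term survives because $f$ depends on $u$ alone. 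Your route costs one extra lemma but buys transparency: the binomial coefficient emerges as a universal iterated-commutator (Leibniz-type) coefficient independent of the potential $f$, and part (2) becomes a one-line specialization rather than an induction on the statement itself. One small point to make explicit: in your operator identity the indices $j-r$ drop below $-1$, so you need the convention $\partial_{u_m}=0$ for $m\le -2$ (consistent with $[\partial_u,D]=0$ terminating the chain of commutators); this is harmless but should be stated when you fix the index conventions.
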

\begin{proof}
We use induction and the binomial identity  ${i-1 \choose j}+{i-1 \choose j-1}={i \choose j}$ for both formulas.  We illustrate the calculation for the second formula only, making use of the first identity in the calculation.  Suppose that the second formula holds for all $n<i+1$ and all $j$.  Then 
\begin{align*}
T^{i+j+1}_{u_j}&=e_{j+1} e^k_{-1}T^{i+j}=(e^k_{-1} e_{j+1}+e_j) T^{i+j}\\
&= {i+j \choose i-1}e^k_{-1}T^{i-1}_u +{i+j \choose i}T^i_u ={i+j+1 \choose i} T^i_u.
\end{align*}
In the last equality we used the fact that $[e^k_{-1},e_0] T^{i-1}=0$ because $[e^k_{-1},e_0]$ is in the span of the $e_{\ol{i}}$, which annihilate $T^r$ for all $r$. 
\end{proof}
Thus on an integral manifold $T^i=\frac{\partial^i f}{{\partial z}^i}$.  Using Lemma \ref{lem:tidentities} it is not hard to compute that
\be\label{eq:Commutator}
[e^k_{-1},e^k_{\ol{-1}}]=\ol{T}^{k+1} \ol{e}_{k+1}-{T}^{k+1} {e}_{k+1},
\ee
which will be needed later.

\begin{prop}
For $1\leq i\leq k$ the following structure equations are satisfied on $\M{k}$:
\begin{align*}
\ed T^i &\equiv  T^{i+1}\zeta+ \tau^i  \;\;\; {\rm mod}\; \zetab\\
\ed \zeta&=0\\
\ed \eta_0&=\zeta \w \eta_1 + \zetab \w \etab_1\\
\ed \om_{k+1}&=\tau^{k} \w \zetab+T^{k+1} \zeta \w \zetab\\
\ed \eta_{i}&=- \eta_{i+1} \w \zeta+\tau^{i-1} \w \zetab
\end{align*}
\end{prop}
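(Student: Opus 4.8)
The plan is to turn every exterior derivative into a bookkeeping computation using the explicit dual frame recorded above: since $\{X_a\} = \{e^k_{-1}, e^k_{\ol{-1}}, e_0, e_1, \ol{e}_1, \dots, e_k, \ol{e}_k, e_{k+1}, \ol{e}_{k+1}\}$ is dual to the coframe $\{\zeta, \zetab, \eta_0, \eta_1, \etab_1, \dots, \eta_k, \etab_k, \om_{k+1}, \omb_{k+1}\}$, any smooth function $g$ on $\M{k}$ satisfies
\begin{align*}
\ed g ={}& (e^k_{-1}g)\,\zeta + (e^k_{\ol{-1}}g)\,\zetab + (e_0 g)\,\eta_0 + (e_{k+1}g)\,\om_{k+1} + (\ol{e}_{k+1}g)\,\omb_{k+1}\\
&{}+ \sum_{m=1}^{k}\bigl((e_m g)\,\eta_m + (\ol{e}_m g)\,\etab_m\bigr).
\end{align*}
The two cheap equations come first: $\ed\zeta = \ed(\ed z)=0$, and for $\eta_0$ I would compute $\ed\eta_0 = -\ed u_0\w\zeta - \ed\ub_0\w\zetab$, substitute $\ed u_0 = \eta_1 + u_1\zeta - f\zetab$ together with its conjugate (recall $f$ is real), and observe that the two resulting $\zeta\w\zetab$ terms cancel, leaving $\zeta\w\eta_1 + \zetab\w\etab_1$.

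The crux is the congruence for $\ed T^i$. Applying the frame expansion to $g=T^i$, the $\zeta$-coefficient is $e^k_{-1}T^i = (e^k_{-1})^{i+1}f = T^{i+1}$ by Lemma~\ref{lem:tidentities}(1). Since $T^i$ is visibly a polynomial in $u, u_0, \dots, u_{i-1}$ alone, with no dependence on any $\ub_j$, every conjugate coefficient $\ol{e}_m T^i$ and $\ol{e}_{k+1}T^i$ vanishes, as does $e_{k+1}T^i = T^i_{u_k}$. The surviving coefficients are $e_0 T^i = T^i_u$ and $e_m T^i = T^i_{u_{m-1}}$ for $1\le m\le i$; here Lemma~\ref{lem:tidentities}(2) gives $T^i_{u_{m-1}} = \binom{i}{m}T^{i-m}_u$, which is exactly the coefficient of $\eta_m$ in $\tau^i = \sum_{j=0}^i\binom{i}{j}T^j_u\,\eta_{i-j}$. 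Collecting terms and discarding the single unknown $e^k_{\ol{-1}}T^i$ contribution yields $\ed T^i \equiv T^{i+1}\zeta + \tau^i \pmod{\zetab}$.

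The remaining two equations then follow formally. From $\om_{i+1} = \ed u_i + T^i\zetab$ I get $\ed\om_{i+1} = \ed T^i\w\zetab$, so wedging the congruence for $\ed T^k$ against $\zetab$ (which annihilates the unknown $\zetab$-term) produces $\ed\om_{k+1} = \tau^k\w\zetab + T^{k+1}\zeta\w\zetab$. For $\eta_i = \om_i - u_i\zeta$ with $1\le i\le k$ I would write $\ed\eta_i = \ed T^{i-1}\w\zetab - \ed u_i\w\zeta$, then substitute $\ed u_i = \eta_{i+1} + u_{i+1}\zeta - T^i\zetab$ along with the congruence for $\ed T^{i-1}$; the two $T^i\,\zeta\w\zetab$ terms cancel, leaving $-\eta_{i+1}\w\zeta + \tau^{i-1}\w\zetab$.

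The only genuine obstacle is the middle step, and even there the hard work has been front-loaded into Lemma~\ref{lem:tidentities}: once one recognizes that the $\eta_m$-coefficients of $\ed T^i$ are precisely the derivatives $T^i_{u_{m-1}}$ and invokes the binomial identity (2), the matching with $\tau^i$ is automatic. Everything else is wedge-product bookkeeping in which the $\zetab$-components either drop out upon wedging with $\zetab$ or cancel in pairs.
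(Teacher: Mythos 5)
Your proof is correct and takes essentially the same route as the paper's: the heart of both arguments is the congruence for $\ed T^i$ modulo $\zetab$, obtained by expanding $\ed T^i$ in the coframe and applying both parts of Lemma~\ref{lem:tidentities}, with the remaining four equations derived from the definitions and from that congruence. You merely make explicit the frame-expansion bookkeeping and the $\zeta\w\zetab$ cancellations that the paper compresses into ``follows directly from the definitions'' and ``follow easily from the first.''
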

\begin{proof}
The second and the third equation follows directly from the definitions and the last two equations follow easily from the first.  To prove the first equation we calculate, using Lemma \ref{lem:tidentities} to differentiate $T^i$:
 \begin{align*}
\ed T^i \equiv  T^{i+1}\zeta+\sum_{j=0}^i T^i_{u_{j-1}} \eta_j \equiv T^{i+1}\zeta+\sum_{j=0}^i {i \choose j} T^{i-j}_u \eta_j \equiv T^{i+1}\zeta+ \tau^i,
\end{align*} 
where all equivalences are modulo $\zetab$.
\end{proof}

It will also be convenient to work on the infinite prolongation $\M{\infty}$ (see Section 4.3.1 of  \cite{Bryant2003}).  The infinite dimensional space $ \M{\infty}$ is the inverse limit of the sequence
\[\left\lbrace \ldots \to \M{k} \overset{\pi_{k}}{\to} \M{k-1}\overset{\pi_{k-1}}{\to} \ldots \to \M{1} \overset{\pi_{1}}{\to} \M{0} \right\rbrace;  \]
that is,
\[
\M{\infty}=\left\lbrace   (p_0,p_1,\ldots) \in \M{0} \times \M{1} \times \ldots : \pi_{k}(p_k)=p_{k-1}\;\; {\rm for \; each}\; k \geq 1  \right\rbrace.
\]
Let $\pi_{(k)}:\M{\infty} \to \M{k}$ be the natural surjections 
\[\pi_{(k)}(p_0,p_1,\ldots)=(p_0,p_1,\ldots,p_k).\]
A smooth function or differential form on $\M{\infty}$ is given by the pullback via $\pi_{(k)}$ of a corresponding object on some finite prolongation.     On $\M{\infty}$ the real and imaginary parts of $\zeta,\eta_0, \eta_1, \dots$ form a coframe and the dual vector fields on $\M{\infty}$ are the real and imaginary parts of 
\bc
\begin{tabular}{lll}
$e_{-1}=\ddv{z}+u_0\ddv{u}+\sum_{i=0}^{\infty} u_{i+1} \ddv{u_{i}}-\sum_{i=0}^{\infty}\ol{T}^{i}\ddv{\ub_{i}}$ & & $\zeta$ \\
$e_0= \ddv{u}$& $\longleftrightarrow$& $\eta_0$\\
$e_i=\ddv{u_{i-1}} \;\;\; i=1 \ldots k+1$& &$\eta_i \;\; (i=1,2,\ldots)$\\
\end{tabular}
\ec
The ideal is generated by the (formally Frobenius) subbundle
\[
\I{\infty}=\{\eta_0,\eta_{1},\etab_{1},\eta_2, \etab_2,\ldots \}.
\]
Note that if $F\in \Om^0(\M{k},\C)$ then $\pi_{(k)}^*(e_{-1}^k F)=e_{-1}( \pi_{(k)}^*(F))$.

Thus far we have calculated a coframe, its dual frame, and the structure equations of an arbitrary prolongation of $(M,\mci)$.  We now turn to some of the special structures on $(\M{k},\mci^{(k)})$ that arise due to the ellipticity and the $\s{1}$--symmetry. 

The PDE \eqref{eq:fGordon} is invariant under the $\s{1}$--action $(u,z,\zb) \to (u,\lambda z,\ol{\lambda} \zb)$ (with $\lambda \in \C$ and $|\lambda|=1$). This leads to a symmetry of $(\M{k},\mci^{(k)})$, which yields a decomposition of differential forms and thus conservation laws.   To see this, let $F:\s{1} \times \M{k} \to \M{k}$ be defined as 
\be\label{eq:Symmetry}
F(\lambda,u,z,u_j)=(u,\lambda^{-1} z,\lambda^{j+1}u_j).
\ee
For $p \geq 0$ and $j \in \Z$ we define the spaces of differential forms of {\bf homogeneous weighted degree} $j$ to be
\be\label{eq:wdegforms}
\Om^p_j(\M{k}) =\left\{\vp \in \Om^p(\M{k},\C) \mid F^*\vp=\lambda^j \vp \right\}.
\ee
For an element $\vp \in \Om^p_j(\M{k},\C)$, we write $\wt(\vp)=j$. Note that $\wt(\vp)=-\wt(\ol{\vp})$. This grading is preserved by exterior differentiation:  
\[
\ed: \Om^p_j(\M{k}) \to \Om^{p+1}_j(\M{k}).
\]
 One calculates that
\begin{center}
\begin{tabular}{ll}
$\wt(z)=-1$&$ \wt(\zb)=1$\\
$\wt(u_j)=+(j+1)$&$ \wt(\ub_j)=-(j+1)$\\
$\wt(u)=0$ & $\wt(\eta_0)=0$\\
$\wt(\zeta)=-1$ & $\wt(\zetab)=1$ \\
 $\wt(\om_j)=+j $&$ \wt(\omb_j)=-j$\\
 $\wt(\eta_j)=+j $&$ \wt(\etab_j)=-j$\\
\end{tabular}
\end{center}
We will use this symmetry in Sections \ref{sec:Homogeneous} and \ref{sec:Existence}.

The ellipticity of \eqref{eq:fGordon} leads us to the following
\begin{defn}
Define the subspaces $\Om^{(1,0)}(\M{k})=\C \cdot \{\zeta,\eta_1,\ldots,\eta_k,\om_{k+1} \}$ and $\Om^{(0,1)}(\M{k})=\C \cdot \{\zetab,\etab_1,\ldots,\etab_k,\omb_{k+1} \}$, and in the standard way also $\Om^{(p,q)}(\M{k})$. We define the operators $\del:C^{\infty}(\M{k},\C) \to \Om^{(1,0)}(\M{k})$ and $\delb:C^{\infty}(\M{k},\C) \to \Om^{(0,1)}(\M{k})$  as
\begin{align*}
\del A&=e_{-1}^k(A)\zeta+\sum_{i=1}^{k}A_{u_{i-1}} \eta_i + A_{u_k}\om_{k+1}\\
\delb A&=e^k_{\ol{-1}}(A)\zetab+\sum_{i=1}^{k}A_{u_{i-1}} \etab_i + A_{u_k}\omb_{k+1},
\end{align*}
allowing for $k=\infty$.
\end{defn}

It will be convenient to use the following linear operator 
\[
J:\Om^{1}(\M{k}) \to \Om^{1}(\M{k}),
\]
which acts by $\mo$ on $\Om^{(1,0)}(\M{k})$, by $-\mo$ on  $\Om^{(0,1)}(\M{k})$, and as the identity on $\R \cdot \eta_0$. This is an almost complex structure on the annihilator of $e_0$.

\section{Conservation laws as elements of the characteristic cohomology}\label{sec:CharacteristicCohomology}
Let   $(M,\mci)$ be an involutive exterior differential system with maximal integral submanifolds of dimension $n$ and characteristic number $l$.\footnote{The characteristic number is computed from $\mci$ using linear algebra.  See Section 4.2 of \cite{Bryant1995} for the definition.   It is often $1$, as it is for \eqref{eq:EDS}. }   Its characteristic cohomology is defined to be
\[
H^p_0(M,\Om/\mci),
\]
that is, the cohomology for the complex $\Om/\mci$ with differential $\ol{\ed}:{\Om^p}/({\mci \cap \Om^p}) \to {\Om^{p+1}}/{(\mci  \cap \Om^{p+1})} $ induced by the standard exterior derivative.   The subscript $0$ indicates that we are working on the zeroth prolongation.  We will say that we are in the {\bf local} case if $H^p_{dR}(M,\R)=0$ for $p>0$.   In \cite{Bryant1995} it is shown that in the local involutive case $H^p_0(M,\Om/\mci)=0$ for $p<n-l$.  The first nontrivial group is of special interest. 
\begin{defn}
The space of {\bf classical undifferentiated conservation laws} for $(M,\mci)$ is $H^{n-l}_0(M,\Om/\mci)$. 
\end{defn}
\begin{rem}
For the system \eqref{eq:EDS} we have $n=2$ and $l=1$ so that the space of classical undifferentiated conservation laws is $H^{1}_0(M,\Om/\mci)$.
\end{rem}
In addition to the quotient complex $(\Om/\mci,\ol{\ed})$ we also have the  subcomplex $(\mci \cap \Om^p(M,\R),\ed)$ and its cohomology $H_0^{p}(M,\mci)$.  To calculate the conservation laws in the local case one uses the isomorphism 
\[H_0^{n-l}(M,\Om/\mci) \cong H_0^{n-l+1}(M,\mci)\]
which follows from the long exact sequence in cohomology, which is induced by the short exact sequence 
\[
0 \to \mci \to \Om \to \Om/\mci \to 0
\]
and the fact that $H^p(M,\R)=0$  for $p>0$.   
\begin{defn}
The space of {\bf classical differentiated conservation laws} for $(M,\mci)$ is $H^{n-l+1}_0(M,\mci)$. 
\end{defn}
An element of $H_0^{n-l+1}(M,\mci)$ is a closed $(n-l+1)$--form in the ideal and we only care about it modulo $\ed$ of $(n-l)$--forms in the ideal.  The characteristic cohomology machinery developed in \cite{Bryant1995} identifies the space of conservation laws as the kernel of a linear differential operator (as opposed to elements of a quotient space), much as one finds harmonic representatives of de Rham classes in Hodge theory. 

On $(\M{\infty},\mci^{(\infty)})$ one has the associated characteristic cohomology, which we abbreviate as $\bar{H}^p:=H^p(\M{\infty},\Om/\mci)$.   We continue to restrict to the local case, $H^p(\M{k},\R)=0$ for $k \geq 0$ and $p>0$.
\begin{defn}
The space of {\bf higher-order undifferentiated conservation laws} for $(M,\mci)$ is 
\[
\bar{H}^{n-l}:=H^{n-l}(\M{\infty},\Om/\mci).\]
The space of {\bf higher-order undifferentiated {\it complex} conservation laws} for $(M,\mci)$ is 
\[\bar{H}^{n-l}_{\C}:=H^{n-l}(\M{\infty},\Om_\C/\mci_\C),\] where the subscript $\C$ denotes complexification. 
\end{defn}
Any element of $\bar{H}^{n-l}$ is represented by an element of $\Om^{n-l}(\M{\infty},\R)$ which, by definition, is the pullback under $\pi_{(k)}:\M{\infty} \to \M{k}$ of an element of  $\Om^{n-l}(\M{k},\R)$ for some $k$.  Again, we can study the conservation laws via the isomorphism $H^{n-l}(\M{k},\Om/\mci) \cong H^{n-l+1}(\M{k},\mcip{k})$ because   $H^p(\M{k},\R)=0$  for $p>0$.   
\begin{defn}
The space of {\bf higher-order differentiated conservation laws} for $(M,\mci)$ is $H^{n-l+1}(\M{\infty},\mci)$. The space of {\bf higher-order differentiated {\it complex} conservation laws} for $(M,\mci)$ is $H^{n-l+1}(\M{\infty},\mci_{\C})$.
\end{defn}
Exterior differentiation provides isomorphisms
\begin{align*}
 \ed:\bar{H}^{n-l}& \overset{\cong}{\to} H^{n-l+1}(\M{\infty},\mci)\\
 \ed:\bar{H}_{\C}^{n-l}& \overset{\cong}{\to} H^{n-l+1}(\M{\infty},\mci_{\C})
\end{align*} 
in the local involutive case.

\section{Classical conservation laws }\label{sec:Classical}
For the system \eqref{eq:EDS} the maximum integral manifolds are of dimension $2$ and the characteristic number is $1$.  In the notation of the last section, $n=2$ and $l=1$.  Thus a classical differentiated conservation law is represented by a closed form in $\mci \cap \Om^2(M,\R)$.  Any $2$--form in $\mci$ can be written as 
\[
\Phi'=\rho \w \eta_0 + A \psi + B \ed \eta_0
\]
for some $1$--form $\rho$ and functions $A$ and $B$.
This can be rewritten as
\[
\Phi'=(\rho -\ed B) \w \eta_0 + A \psi + \ed (B  \eta_0).
\]
As we are only interested in the class $[\Phi'] \in H^2(M,\mci)$, we need only concern ourselves with finding a $1$--form $\rho$ and a function $A$ such that 
\[
\Phi=\rho \w \eta_0 + A \psi 
\]
is closed and $\Phi \neq \ed \alpha$ for $\alpha \in \mci$.  It is easy to check that for any $\Phi$ of the form given, $\Phi \neq \ed (g \eta_0)$ for any function $g$. Examining $\eta_0 \w \ed \Phi=0$ uncovers that $\rho\equiv -\frac{1}{2}J\ed A\;\mod \eta_0$.  Then considering the terms in $\ed \Phi=0$ that involve $\eta_0$ uncovers the condition
\[
\frac{1}{2}\ed J \ed A-\mo f_u A \zeta \w \zetab + A_u \psi \equiv 0 \; \mod \; \eta_0.
\]
By studying the coefficients of this vanishing $2$--form one can check that if $\log(f)_{uu} \neq 0$ and $f_{uu} \neq 0$ then the only conservation laws are given by setting  
\begin{align*}
A&=P+\ol{P}\\
P&=a u_0+\mo bz u_0
\end{align*}
where $a \in \C$ and $b \in \R$ are arbitrary constants.   

In Section \ref{sec:Homogeneous} we will introduce a systematic way of finding undifferentiated conservation laws from differentiated ones.  It will fail only for the classical conservation laws with $a=0$ and $b \neq 0$.  For that reason we present here the $1$--form
\be\label{eq:wd0}
\vp_0=G \eta_0 +E \zeta+ \ol{E}\zetab
\ee 
with $G=-(zu_0+\zb \ub_0)$ and $E=-\frac{1}{2}z u_0^2+\zb \int f$. It satisfies $\ed \vp_0=\Phi$ when we take $P=\mo  z u_0$ in the definition of $\Phi$.  Notice that it is also an element of $\Om^1_0(M,\R)$, something we will make use of in Section \ref{sec:Existence}.

In order to look for higher-order conservation laws, that is, conservation laws of the prolonged system, we will make use of some spectral sequence machinery which we now describe.

\section{The first approximation of the characteristic cohomology}\label{sec:FirstApproximation}
The material in this section is based on Sections 1.3, 2.1--2.4, and 5.1 of \cite{Bryant1995}.  Let $(\M{\infty},\mci^{(\infty)})$ be the infinite prolongation of an involutive EDS $(M,\mci)$.   Assume that $H^p_{dR}(\M{k},\R)=0$ for all $p>0$ and for all $k \geq 0$ so that we are in the local involutive case.  The system \eqref{eq:EDS} is in the local involutive case.  Let $\I{\infty}=\mci_{\C}^{(\infty)} \cap \Om^1(\M{\infty},\C)$ and let $\Om^p=\Om^p(\M{\infty},\C)$.  Then define
\begin{align*}
F^p\Omega^{q}&=\Im\{\I{\infty} \w \ldots \w \I{\infty} \w \Omega^{q-p} \to \Omega^q   \}\\
\ob{p}{q}&=F^p\Omega^{q}/F^{p+1}\Omega^{q}.
\end{align*}
Let $(\et{p}{q}{r},{\ed}_r)$ denote the spectral sequence associated to this filtration \cite{Griffiths1994} whose first two terms are
\begin{align*}
\et{p}{q}{0}&=\ob{p}{q}\\
\et{p}{q}{1}&=H(\et{p}{q}{0},{\ed}_0)=\frac{\{\phi \in F^p\Omega^{q} : \ed \phi \in F^{p+1}\Omega^{q+1} \}}{\ed F^{p}\Omega^{q-1} \oplus F^{p+1}\Omega^{q}  }.
\end{align*}  
Notice that 
\[
{H}^q(\Om/\mci^{(\infty)},\ol{\ed})=\et{0}{q}{1}.
\]
We study this space indirectly using the spectral sequence $(\et{p}{q}{r},\ed_r)$, which includes the complex\\
\centerline{\xymatrix{0 \ar[r]&\et{0}{q}{1} \ar[r]^{\ed_1}& \et{1}{q+1}{1} \ar[r]^{\ed_1}& \et{2}{q+2}{1} \ldots}}
From Equation (4) of Section 4.2 in \cite{Bryant1995} we know that this sequence is exact at $ \et{0}{q}{1}$ and $ \et{1}{q+1}{1}$ so that the characteristic cohomology is isomorphic to
\[
\ker \{\ed_1:\et{1}{q+1}{1} \to \et{2}{q+2}{1}\}.
\]

A second spectral sequence allows one to obtain a first approximation to this kernel:  for this we use a weight filtration  
\[
\ol{F}_k\ol{\Om}^{p,q}=\{ \phi \in \ol{\Om}^{p,q}: wt(\phi) \leq k \} \subset \ob{p}{q},
\]
whose associated graded spaces we denote as $\obf{p}{q}{k}:=\ol{F}_k\ol{\Om}^{p,q}/\ol{F}_{k-1}\ol{\Om}^{p,q}$.  See Section 2.4 of \cite{Bryant1995} for the general definition of this weight filtration.  We will return to its properties and the weights for system \eqref{eq:EDS} in a moment.  First let's see how it will be used.  This quotient complex has the associated cohomology groups
\[
\mch_k^{p,q}=H^q(\obf{p}{*}{k},\delta),
\]
where $\delta$ is the differential induced by exterior differentiation.  Now for each fixed $p>0$ there is a spectral sequence $\{\bar{E}_r^{k,q} \}$ associated to the weight filtration that converges to $\et{p}{q}{1}$ and which satisfies
\[
\bar{E}_1^{k,q}=\mch_k^{p,q}.
\]
Thus computing $\mch_k^{p,q}$ gives us a first approximation of the form of a conservation law.  The importance of this step is that $\delta$ is linear over functions and computing $\mch_k^{p,q}$ is a purely algebraic process depending only on the principal symbol of the EDS.  

In order to compute the cohomology groups $\mch_k^{p,q}$ for \eqref{eq:EDS} we need the following properties of $wt$.  Let $f\neq 0$ be a smooth function and $\vp$ and $\psi$ smooth differential forms.  Then
\begin{align*}
wt(f \vp)&=wt(\vp) \\
wt(\vp \w \psi)&\leq wt(\vp)+wt(\psi)\\
wt(\vp + \psi)& \leq max(wt(\vp),wt(\psi)).
\end{align*}

Now suppose that $(\M{\infty},\mci^{(\infty)})$ is the infinite prolongation of \eqref{eq:EDS}.   Then we have the following weighting\footnote{This weighting system will not be used after this section and is distinct from the notion of weighted degree ($\wt{}$) defined in Section \ref{sec:EDS} and used throughout the paper.} system:
\begin{center}
\begin{tabular}{lll}
 $wt(\zeta)=wt(\zetab)=-1$ & $wt(\eta_0)=1$ & $wt(\eta_j)=wt(\etab_j)=j$\\
\end{tabular}
\end{center}
\comment{
The induced differential on the graded spaces $\obf{p}{q}{k}$ is complex linear and has structure equations
\begin{align*}
\delta \eta_0 &\equiv \zeta \w \eta_1+\zetab \w \etab_1 \in  \obf{p}{q}{0} \equiv 0 \in  \obf{p}{q}{1}\\
\delta \eta_j & \equiv \zeta \w \eta_{j+1} \in \obf{p}{q}{j}\\
\delta \zeta & \equiv 0.
\end{align*}

In the equations above and until specified otherwise, `$\equiv$' indicates equality among equivalence classes in $\obf{p}{q}{k}$.
}
To compute the necessary cohomology groups we need the spaces
\bc
\begin{tabular}{ll}
$\obf{1}{1}{-1}=0$&$\obf{1}{2}{-2}=0$\\
$\obf{1}{1}{0}=0$  &$\obf{1}{2}{-1}=0$ \\
$\obf{1}{1}{1}=\C \cdot \{\eta_0,\eta_1,\etab_1 \}$&$\obf{1}{2}{0}=\C \cdot \{\eta_0 \w \zeta, \eta_0 \w \zetab, \eta_1 \w \zeta, \eta_1 \w \zetab, \etab_1 \w \zeta, \etab_1 \w \zetab\}$ \\
$\obf{1}{1}{j}=\C \cdot \{\eta_j,\etab_j \}$ & $\obf{1}{2}{j}=\C \cdot \{\eta_{j+1} \w \zeta, \eta_{j+1} \w \zetab, \etab_{j+1} \w \zeta, \etab_{j+1} \w \zetab\} \;\;{\rm for}\;\; j>1$\\
\end{tabular}
\ec
Easy calculation uncovers that the only nonzero cohomology group for the complexes $(\obf{p}{q}{k},\delta)$ is
\[
\mch_0^{1,2}=\C \cdot \{\zeta \w \eta_1, \zetab \w \etab_1  \}.
\]
This implies that a conservation law can be represented by a form
\[
\tilde{\Phi} \equiv A'' \Re(\zeta \w \eta_1) + B'' \Im( \zeta \w \eta_1)  \;\;\; \mod \; F^2\Om^2.
\]
This can be rewritten as 
\[ \tilde{\Phi} \equiv A' \ed \eta_0 + B' \psi \equiv \eta_0 \w \ed A' + B' \psi+\ed(A'  \eta_0) \; \mod \; F^2\Om^2
\]
or, as we will continue on with, it can be written as 
\[
\Phi \equiv \eta_0 \w \rho+A\psi \;\; \mod \; F^2\Om^2+ \ed ( F^1 \Om^1)
\]
for some $1$--form $\rho$ and function $A$, where we have left off the exact piece because that does not alter its class in ${H}^2(\M{\infty},\mci^{(\infty)})$.  In the next section we remove the congruence, finding how the closure of $\Phi$ determines $\rho$ and the other coefficients in terms of $A$, as well as equations that $A$ must satisfy.  However, to prove the closure of $\Phi$ directly requires verifying some elaborate equations.  We circumvent this in Section \ref{sec:Existence} by using the normal form for undifferentiated conservation laws found in Section \ref{sec:Homogeneous}.

\section{The normal form of the  differentiated conservation laws}\label{sec:AlgebraicForm}
From now on assume that $f$ does not satisfy a first--order ODE.  We make the following definition.
\begin{defn}\label{def:normalform}
A representative $\Phi \in \mci^{(\infty)} \cap \Om^2(\M{\infty},\R)$ of a differentiated conservation law on $\M{\infty}$ is in {\bf normal form} if
\begin{align*}
\Phi&=\eta_0 \w \rho + A \psi   \\ 
&+\sum_{1\leq i < j \leq k } \left( B^{ij}\eta_i \w \eta_j +\ol{B}^{ij}\etab_i \w \etab_j\right) +\sum_{1\leq i \leq j \leq k } \left( D^{ij}\eta_i \w \etab_j +\ol{D}^{ij}\etab_i \w \eta_j \right).
\end{align*}
for some $k$ and some functions $A,B^{ij},D^{ij}:\M{\infty} \to \C$ with  $\ol{A}=A$. 
\end{defn}
There is an analogy between the role of conservation laws in normal form and harmonic representatives of de Rham cohomology classes which we now recall.  The de Rham cohomology of a smooth closed manifold $X$ consists of the \emph{quotient} groups $H^p_{dR}(X,\R)=\frac{\ker(\ed:\Om^{p}(X,\R) \to \Om^{p+1}(X,\R))}{\Im(\ed:\Om^{p-1}(X,\R) \to \Om^p(X,\R))}$.  Hodge theory shows that if one has a Riemannian metric, then one can represent these quotient spaces as subspaces of $\Om^p(X,\R)$ in a natural way -- each class in the quotient space has a unique harmonic representative.   Analogously, \emph{elements of the characteristic cohomology have unique representatives in normal form} \cite{Bryant1995}.
\begin{defn}
Let $\mcc \subset \Om^2(\M{\infty},\R) \cap \mci^{(\infty)}$ denote the space of representatives of differentiated conservation laws in normal form.  A conservation law on $\M{\infty}$ in normal form is said to have {\bf level $k$} if it is defined on $\M{k}$.   Let $\mcc_{(k)}$ denote the space of representatives of conservation laws of level $k$ in normal form.
\end{defn}
Clearly we have $\mcc_{(k)} \subset \mcc_{(k+1)}$ and $\mcc=\bigcup_k \mcc_{(k)}$.
In a series of lemmas we will prove the following proposition, in which the normal form is further refined. In this section, we will not prove the existence of elements of $\mcc_{(k)}$; the proposition only tells us what the elements of $\mcc_{(k)}$ must look like if they do exist.
\begin{prop}\label{prop:NormalForm}
Any element of $\mcc_{(k)}$ is of the form
\begin{equation}\label{eq:NormalForm}
\Phi  =\eta_0 \w \rho + A \psi +\sum_{1\leq i < j \leq k } \left( B^{ij}\eta_i \w \eta_j +\ol{B}^{ij}\etab_i \w \etab_j\right).
\end{equation}
The one--form $\rho$ and the function $B$ are determined by $A$ via the formulae
\begin{align}
\rho &= -\frac{1}{2} J\ed A  \label{eq:rhobyA}\\
B^{ij}&=\mo \sum_{m=0}^{k-j-i+1}(-1)^{m-i+1}{m+i-1 \choose i-1}(e_{-1})^{m}A_{u_{m+j+i-1}}\label{eq:BbyA}
\end{align}
if we insist on the normalization $e_0 \lhk \rho=0$, which we will do.  The function $A$ on $\M{k}$ -- which we henceforth refer to as the {\bf generating function} of $\Phi$ -- satisfies 
\be\label{eq:NoMixing}
A_{u_i,\ol{u_j}}=A_u=0
\ee 
and   
\begin{equation}\label{eq:ED}
\mce(A):=e_{\ol{-1}} e_{-1}A+f_u A=0.
\end{equation}
\end{prop}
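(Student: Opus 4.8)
The plan is to start from a representative $\Phi$ written in the general normal form of Definition \ref{def:normalform}, impose the closure condition $\ed\Phi=0$, and read off all the stated constraints by decomposing the resulting $3$--form against the coframe $\{\zeta,\zetab,\eta_0,\eta_1,\etab_1,\dots,\eta_k,\etab_k,\om_{k+1},\omb_{k+1}\}$ of $\M{k}$. Everything is computable from the structure equations of Section \ref{sec:EDS}: $\ed\eta_0=\zeta\w\eta_1+\zetab\w\etab_1$, $\ed\eta_i=-\eta_{i+1}\w\zeta+\tau^{i-1}\w\zetab$ (with $\eta_{k+1}$ read as $\om_{k+1}$ at the top level), $\ed\om_{k+1}=\tau^k\w\zetab+T^{k+1}\zeta\w\zetab$, together with $\ed\psi=-\mo f_u\,\eta_0\w\zeta\w\zetab$. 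Since $\ed$ preserves weighted degree, I would split $\Phi$ into its homogeneous pieces and argue on each separately; this keeps the holomorphic terms $\eta_i\w\eta_j$, the anti-holomorphic terms $\etab_i\w\etab_j$, and the mixed terms $\eta_i\w\etab_j$ from interfering, and lets me organize the whole computation by the order filtration, working from the top coframe element $\om_{k+1}$ downward.

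First I would determine $\rho$. Mirroring the classical computation of Section \ref{sec:Classical}, the projection $\eta_0\w\ed\Phi=0$ annihilates every term already containing $\eta_0$ and isolates the piece in which $\rho$ enters only through $\ed\eta_0\w\rho$ and $\ed A\w\psi$. At leading filtration order this reduces exactly to the classical equation and gives $\rho\equiv -\tfrac12 J\ed A\ \mod\eta_0$; imposing the normalization $e_0\lhk\rho=0$ then removes the residual $\eta_0$ ambiguity and yields \eqref{eq:rhobyA} on the nose.

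The heart of the argument is the top--order analysis and the descending recursion that produces \eqref{eq:BbyA} while forcing the vanishing of the mixed terms. I would first collect in $\ed\Phi$ every $3$--form containing $\om_{k+1}$ or $\omb_{k+1}$: these arise from $\ed\eta_k$, from $\ed\etab_k$, and from the appearance of $A_{u_k}\om_{k+1}$ and $B^{ij}_{u_k}\om_{k+1}$ (and conjugates) inside $\ed A$ and $\ed B^{ij}$. Since $\om_{k+1}$ is independent of the rest of the coframe, its coefficients vanish separately. Matching the coefficient of $\om_{k+1}\w\eta_i\w\zeta$ expresses the top--level $B^{ik}$ in terms of the $A_{u_m}$, whereas every coefficient of a form of mixed type $\om_{k+1}\w\etab_j\w(\cdot)$ has no partner of the same type to cancel against, forcing $A_{u_k,\ol{u_j}}=0$ and $D^{ij}=0$ at the top level. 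Peeling off $\om_{k+1}$ and repeating on $\eta_k,\eta_{k-1},\dots$ produces a recursion relating $B^{i,j}$ to $B^{i,j+1}$ and to powers of $e_{-1}$ applied to the $A_{u_m}$ (the $A$--dependent source entering through $\ed(\eta_0\w\rho)+\ed A\w\psi$ once $\rho=-\tfrac12 J\ed A$ is substituted), the coupling coming precisely from the $-\eta_{j+1}\w\zeta$ term of $\ed\eta_j$. Solving this by induction, using the binomial identity ${i-1\choose j}+{i-1\choose j-1}={i\choose j}$ as in Lemma \ref{lem:tidentities}, gives the closed form \eqref{eq:BbyA}; the sum terminates at $m=k-j-i+1$ because $A$ lives on $\M{k}$, so $A_{u_m}=0$ for $m>k$. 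The same descent shows inductively that every $D^{ij}$ vanishes and that $A_{u_i,\ol{u_j}}=0$ for all $i,j$, establishing both the refined normal form \eqref{eq:NormalForm} and the first half of \eqref{eq:NoMixing}.

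Finally, the lowest--order part of $\ed\Phi=0$ delivers the conditions on $A$. Substituting $\rho=-\tfrac12 J\ed A$ and using $\ed\psi=-\mo f_u\,\eta_0\w\zeta\w\zetab$, the coefficient of $\eta_0\w\zeta\w\zetab$ collapses, after applying the structure equations to $\ed(J\ed A)$, to $e_{\ol{-1}}e_{-1}A+f_uA$, which is exactly $\mce(A)=0$, i.e. \eqref{eq:ED}. The residual term proportional to $A_u\psi$ (the same one seen in Section \ref{sec:Classical}) cannot be balanced once $f$ is assumed not to satisfy a first--order ODE, forcing $A_u=0$ and completing \eqref{eq:NoMixing}. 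I expect the main obstacle to be the combinatorial bookkeeping of the descending recursion for $B^{ij}$: pinning down the signs and binomial coefficients of \eqref{eq:BbyA}, and checking that the recursion, which is over--determined because each $B^{ij}$ is reached from several directions, is consistent. It is precisely this consistency that requires the no--mixing conditions and \eqref{eq:ED}, so in practice the recursion and the equations for $A$ are best established together rather than in isolation.
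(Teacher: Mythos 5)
Your overall strategy is the paper's: expand $\ed\Phi=0$ against the coframe, kill the mixed terms $D^{ij}$ by a descending induction starting from the top coframe elements, derive a descending recursion for the $B^{ij}$ seeded at $B^{1k}$ and solved with the binomial identity, and read \eqref{eq:ED} off the $\eta_0\w\zeta\w\zetab$ coefficient (this is the content of Lemmas \ref{lem:2002}, \ref{lem:lemmaba}, \ref{lem:BFormula} and the final computation). However, your derivation of $A_u=0$ has a genuine flaw. You claim the residual term $A_u\psi$ ``cannot be balanced once $f$ is assumed not to satisfy a first--order ODE.'' That is not the mechanism, and as stated the step would fail: the $A_u$ term in the $\eta_0\w\zeta\w\eta_1$ coefficient of $\ed\Phi$ \emph{is} balanced, namely against the second--derivative terms of $A$ coming from $\eta_0\w\ed\rho$, and the resulting equation is $\tfrac{\mo}{2}\left(-e_1e_{-1}A+e_{-1}e_1A-A_u\right)=0$. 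The key is the commutator identity $[e_{-1},e_1]A=-A_u$, which collapses this to $-\mo A_u=0$ identically; no property of $f$ enters. Likewise, the paper obtains $A_{u_i\ol{u_j}}=0$ directly from the $\eta_0\w\eta_i\w\etab_j$ coefficient together with $[e_i,e_{\ol{j}}]=0$, rather than from the $\om_{k+1}$--descent you describe. In fact the paper's proof of this Proposition nowhere uses the hypothesis that $f$ satisfies no first--order ODE (that assumption only becomes essential from Section \ref{sec:Existence} onward), so invoking it to establish \eqref{eq:NoMixing} indicates this step was not actually carried out.

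Two smaller points. First, your organizing claim that splitting $\Phi$ into weighted--homogeneous pieces ``keeps the holomorphic terms, the anti-holomorphic terms, and the mixed terms from interfering'' is false: the coefficient functions carry their own weights, so $B^{ij}\eta_i\w\eta_j$ and $D^{lm}\eta_l\w\etab_m$ can occur in the same weighted degree; the separation you actually exploit comes from the coframe type decomposition, and the paper's proof makes no use of homogeneity here. Second, in the last step you need $[e_{-1},e_{\ol{-1}}]A=0$ to pass from the $\eta_0\w\zeta\w\zetab$ coefficient $e_{-1}e_{\ol{-1}}A+e_{\ol{-1}}e_{-1}A+2f_uA$ to $2\,\mce(A)$; on a finite prolongation this commutator is $\ol{T}^{k+1}\ol{e}_{k+1}-T^{k+1}e_{k+1}\neq 0$ by \eqref{eq:Commutator}, so one must work on $\M{\infty}$, as the paper does, or else control the correction terms.
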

\begin{rem}
The normal form of the differentiated conservation laws can be anticipated.  From its definition, $\psi \in \Om^{(2,0)}(\M{k})\oplus \Om^{(0,2)}(\M{k})$ so that modulo $\eta_0$, $\Phi  \in  \Om^{(2,0)}(\M{k})\oplus \Om^{(0,2)}(\M{k})$.  When $f=0$ Equation \eqref{eq:fGordon} is Laplace's equation, and then \eqref{eq:EDS} is an integrable extension of the EDS for holomorphic curves in $\C^2$.  Differentiated conservation laws of the EDS for holomorphic curves in $\C^n$ are closed forms in $\Om^{(2,0)}(\C^n)\oplus \Om^{(0,2)}(\C^n)$ \cite{Bryant1995}.   
\end{rem}

\begin{rem}
Whenever we use $B^{ij}$ with $i,j$ outside the index range $1\leq i<j\leq k$, we understand that $B^{ij}=0$.
\end{rem}
Let $\Phi \in \mcc_{(k)}$. To prove the proposition we unravel the consequences of $\ed \Phi=0$.  First we examine the weaker condition $\eta_0 \w \ed \Phi=0$.
\begin{lem}\label{lem:2002}
A conservation law $\Phi$ is of type $(2,0) + (0,2)$ modulo $\eta_0$; in other words, $D^{ij}=0$ for all $1 \leq i,j \leq k$, and thus $\Phi$ is of the form \eqref{eq:NormalForm}.
\end{lem}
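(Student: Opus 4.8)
The plan is to extract $D^{ij}=0$ from the single \emph{weaker} relation $\eta_0\w\ed\Phi=0$, which certainly holds since $\Phi$ is closed. First I would compute $\eta_0\w\ed\Phi$ from the structure equations, exploiting the drastic simplification that wedging with $\eta_0$ produces: the contribution $-\eta_0\w\eta_0\w\ed\rho$ of $\eta_0\w\rho$ vanishes, the term $A\,\eta_0\w\ed\psi$ vanishes because $\ed\psi=-\mo f_u\,\eta_0\w\zeta\w\zetab\in\mci$ already contains $\eta_0$, and in each occurrence of $\ed\eta_i=-\eta_{i+1}\w\zeta+\tau^{i-1}\w\zetab$ the summand of $\tau^{i-1}$ proportional to $\eta_0$ is killed. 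Thus $\eta_0\w\ed\Phi=\eta_0\w\Xi$ for an explicit $3$--form $\Xi$ built from $\zeta,\zetab$ and the $\eta_i,\etab_i$ with $i\ge 1$, and the content of the relation is that the $\eta_0$--free part of $\Xi$ vanishes.

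Since the $\eta_0$--free $3$--forms of distinct $J$--bidegree are independent coframe monomials, this vanishing splits into its four components of types $(3,0)$, $(2,1)$, $(1,2)$, $(0,3)$, each of which vanishes on its own. The unknowns $D^{ij}$ sit in the $(1,1)$--part of $\Phi$, so $\ed$ of the $D$--terms lands entirely in the $(2,1)$ and $(1,2)$ components of $\Xi$; as these are complex conjugates it suffices to analyze the $(2,1)$ equation.

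The decisive observation is a separation by coframe type. Because the holomorphic parts of both $\ed\eta_0$ and $\psi$ are multiples of $\zeta\w\eta_1$, the generating function $A$ and the $1$--form $\rho$ contribute to the $(2,1)$ component only through $3$--forms divisible by $\zeta\w\eta_1$; and because the $A$-- and $B$--terms of $\Phi$ are of pure type $(2,0)$, their derivatives contribute to $(2,1)$ only through forms containing a $\zetab$ or an $\etab_c$ but \emph{no} $\zeta$ (the holomorphic $-\eta_{i+1}\w\zeta$ part of $\ed\eta_i$ lands in $(3,0)$). Hence the coefficient of any basis form $\zeta\w\eta_a\w\etab_c$ or $\zeta\w\eta_a\w\zetab$ with $a\ge 2$ involves only the $D^{ij}$ and their conjugates, giving a closed linear system. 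Read at the top index $a=k+1$ this pins down the leading $D^{ij}$ (forcing relations such as $D^{kc}=\ol{D}^{ck}$), while the bare monomials $\eta_{k+1}\w\eta_b\w\etab_c$, which arise only from the $\om_{k+1}$--term of $\ed D^{ij}$, show that the $D^{ij}$ are independent of the top variables $u_k,\ub_k$. Feeding this back lowers the effective level of the $(1,1)$--part, so a descending induction on the index drives all $D^{ij}$ down to the classical level of Section \ref{sec:Classical}, where no mixed term exists; therefore $D^{ij}=0$ and $\Phi$ takes the form \eqref{eq:NormalForm}.

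The hard part is precisely this disentangling together with closing the induction. A priori the $(2,1)$ equation couples $A$, $\rho$, $B$ and $D$; the work is the bookkeeping that exhibits the subfamily of coframe monomials seen by $D$ alone, and then the verification that the resulting recursion — which intertwines $D^{ij}$ with $\ol{D}^{ij}$ through the reality $\ol\Phi=\Phi$ and carries the coefficients $T^{j}_u$ coming from $\tau^{i-1}$ (cf.\ Lemma \ref{lem:tidentities}) — genuinely closes and can be solved from the highest index downward.
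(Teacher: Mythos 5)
Your starting point is sound and is in fact the mechanism of the paper's own proof: only the weaker relation $\eta_0\w\ed\Phi=0$ is needed, and your separation observation is correct --- modulo $\eta_0$, the $A$-- and $\rho$--terms contribute to type $(2,1)$ only through multiples of $\zeta\w\eta_1$, and the $B$--terms only through forms containing no $\zeta$, so the coefficients of $\zeta\w\eta_a\w\etab_c$ with $a\geq 2$ (equivalently, by reality of $\ed\Phi$, the coefficients of $\eta_0\w\zetab\w\eta_i\w\etab_{j+1}$, which is what the paper extracts) involve the $D$'s alone. But your top-level equations are stronger than you state: up to an overall sign, the coefficient of $\zeta\w\eta_{k+1}\w\etab_c$ is $\delta_{ck}D^{kk}-\ol{D}^{ck}$, which gives outright $D^{ck}=0$ for $c<k$, and for $c=k$ forces $D^{kk}$ to be real, hence zero, since reality of $\Phi$ lets one take the diagonal $D^{ll}$ purely imaginary. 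You record only relations such as $D^{kc}=\ol{D}^{ck}$, and to convert relations into vanishing you lean on a different mechanism, and that is where the gap lies.

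The gap is the level-reduction step and the induction built on it. The monomials $\eta_{k+1}\w\eta_b\w\etab_c$ do give $D^{ij}_{u_k}=D^{ij}_{\ub_k}=0$, but this is a closed system in the $D$'s only at the very top index: for $l<k$ the monomial $\eta_{l+1}\w\eta_b\w\etab_c$ receives the contribution $B^{l+1,b}_{\ub_{c-1}}$ from the $\etab_c$--component of $\ed B^{l+1,b}\w\eta_{l+1}\w\eta_b$, and at this stage nothing is known about the $\ub$--dependence of the $B$'s (that comes only later, via Lemmas \ref{lem:afromb} and \ref{lem:BFormula}). So the system you call closed is not closed below the top level, and the iteration cannot proceed as described. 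Moreover, ``driving the $D^{ij}$ down to the classical level'' is not a valid termination: lowering the level of the coefficient \emph{functions} $D^{ij}$ does not remove the \emph{forms} $\eta_i\w\etab_j$ with $i,j\geq 1$ from $\Phi$, and the computation of Section \ref{sec:Classical} concerns $2$--forms on $M=\M{0}$, where mixed terms are absent for index reasons; it says nothing about a level--$k$ form with low-level coefficients. The repair is to discard the level-reduction entirely and run a descending induction on the second index of $D$, using only your first family of monomials: assuming $D^{i,j'}=0$ for all $j'>j$, the coefficient of $\zeta\w\eta_{j+1}\w\etab_i$ reduces to $\delta_{ij}D^{jj}-\ol{D}^{ij}$, because the $\ol{\tau}$--contributions and the $e_{-1}$--derivative terms involve only $D$'s with larger second index; hence $D^{ij}=0$, with reality of $\Phi$ disposing of the diagonal case. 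This is precisely the paper's proof, phrased on the conjugate monomials $\eta_0\w\zetab\w\eta_i\w\etab_{j+1}$.
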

\begin{proof}
For $i=1\ldots k$, the $\eta_0 \w \zetab \w \eta_i \w \etab_{k+1}$--coefficient of $\eta_0 \w \ed \Phi$ is $D^{ik}$, so $D^{ik}=0$.  Now assume that $D^{i,k-r}=0$ for $r=0 \ldots j$ and $i \leq k-r$.  We show that $D^{i,k-j-1}$ for $i \leq k-j-1$.  The coefficient of $\eta_0 \w \zetab \w \eta_i \w \etab_{k-j}$ when $i<k-j-1$ is $D^{i,k-j-1}$ plus terms that vanish by the induction hypothesis.  When $i=k-j-1$ the coefficient is $D^{k-j-1,k-j-1}-\ol{D}^{k-j-1,k-j-1}$.  But $D^{l,l}$ is imaginary since $\Phi$ is real.  
\end{proof}

\begin{lem}\label{lem:lemmaba}
\begin{align*}
\rho& \equiv -\frac{1}{2}J\ed A \equiv -\frac{\mo}{2}\left( \del A- \delb A \right) \; \mod \eta_0 \\
A_{u_{j-1}}&=-\mo (e_{-1}B^{1j}+ B^{1,j-1})  \hspace{1cm} j =2\ldots k\\
A_{u_k}&=-\mo B^{1k}
\end{align*}
\end{lem}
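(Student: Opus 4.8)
The plan is to extract all three identities from the single condition $\eta_0\w\ed\Phi=0$, i.e.\ from the vanishing of the $\eta_0$--free part of $\ed\Phi$, which is available since any $\Phi\in\mcc_{(k)}$ is closed. Throughout I adopt the normalization $e_0\lhk\rho=0$, so $\rho$ is $\eta_0$--free, and I use that reality of $\Phi$ together with this normalization forces $\rho$ to be real. By Lemma \ref{lem:2002} I may take $\Phi$ in the form \eqref{eq:NormalForm}, whence
\[
\ed\Phi = \ed\eta_0\w\rho - \eta_0\w\ed\rho + \ed A\w\psi + A\,\ed\psi + \sum_{1\leq i<j\leq k}\left(\ed B^{ij}\w\eta_i\w\eta_j + B^{ij}\,\ed(\eta_i\w\eta_j)\right) + \mathrm{c.c.}
\]
Since $\ed\psi=-\mo f_u\,\eta_0\w\zeta\w\zetab$ and $-\eta_0\w\ed\rho$ both carry $\eta_0$, the $\eta_0$--free part of $\ed\Phi$ is governed only by $\ed\eta_0\w\rho=(\zeta\w\eta_1+\zetab\w\etab_1)\w\rho$, by $\ed A\w\psi$, and by the $B$--terms, into which I insert the structure equations $\ed\eta_i=-\eta_{i+1}\w\zeta+\tau^{i-1}\w\zetab$ and $\psi=-\frac{\mo}{2}(\zeta\w\eta_1-\zetab\w\etab_1)$.

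For the first identity I would pin down $\rho$ by reading off the coefficients of the mixed basis $3$--forms $\zetab\w\etab_1\w\omega$, with $\omega$ ranging over $\zeta,\eta_1,\dots,\eta_k,\om_{k+1}$. The key point, which I would verify by a type count, is that neither the $(2,0)$ forms $B^{ij}\eta_i\w\eta_j$ nor the $(0,2)$ forms $\ol B^{ij}\etab_i\w\etab_j$ can produce a $(1,2)$ form carrying $\zetab$ alongside a single unbarred generator; hence only $\ed\eta_0\w\rho$ and $\ed A\w\psi$ contribute. Matching $\zetab\w\etab_1\w\eta_i$ gives the $\eta_i$--component of $\rho$ as $-\frac{\mo}{2}A_{u_{i-1}}$, matching $\zetab\w\etab_1\w\om_{k+1}$ gives the $\om_{k+1}$--component as $-\frac{\mo}{2}A_{u_k}$, and matching $\zetab\w\etab_1\w\zeta$ gives the $\zeta$--component as $-\frac{\mo}{2}e_{-1}(A)$. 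These say exactly that the $(1,0)$--part of $\rho$ is $-\frac{\mo}{2}\del A$; reality of $\rho$ then forces the $(0,1)$--part to be $\frac{\mo}{2}\delb A$, so $\rho\equiv-\frac{\mo}{2}(\del A-\delb A)=-\frac12 J\ed A$ modulo $\eta_0$, which is formula \eqref{eq:rhobyA}.

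For the second and third identities I would examine the $(3,0)$--part of $\ed\Phi$, which is automatically $\eta_0$--free. Only $\ed\eta_0\w\rho$, $\ed A\w\psi$, and the $(2,0)$ forms $B^{ij}\eta_i\w\eta_j$ contribute. Using the first identity, the $\eta_0\w\rho$ and $A\psi$ contributions coincide and together give $-\mo\,\zeta\w\eta_1\w\big(\sum_{i\geq 2}A_{u_{i-1}}\eta_i + A_{u_k}\om_{k+1}\big)$, while $\ed(B^{ij}\eta_i\w\eta_j)$ contributes $\del B^{ij}\w\eta_i\w\eta_j + B^{ij}(\zeta\w\eta_{i+1}\w\eta_j + \zeta\w\eta_i\w\eta_{j+1})$ once I insert $(\ed\eta_i)^{(2,0)}=\zeta\w\eta_{i+1}$ (with $\eta_{k+1}$ read as $\om_{k+1}$). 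Reading off the coefficient of $\zeta\w\eta_1\w\eta_m$ for $2\leq m\leq k$ yields $-\mo A_{u_{m-1}}+e_{-1}(B^{1m})+B^{1,m-1}=0$, which is the asserted recursion (using $1/\mo=-\mo$ and the convention $B^{ij}=0$ off--range, so $B^{11}=0$); the coefficient of $\zeta\w\eta_1\w\om_{k+1}$ yields $-\mo A_{u_k}+B^{1k}=0$.

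The main obstacle is the bookkeeping. One must confirm that the cross $B$--terms genuinely drop out of the mixed coefficients determining $\rho$ (a type/parity argument), and then track every wedge reordering and index collision in the $(3,0)$ extraction, paying particular attention to the boundary cases $m=2$, where $B^{1,m-1}=B^{11}$ vanishes by convention, and $m=k+1$, where $\om_{k+1}$ replaces the nonexistent $\eta_{k+1}$ so that the relevant structure equation is the one for $\ed\eta_k$ rather than for $\ed\om_{k+1}$. No single step is deep; the difficulty lies entirely in isolating exactly the right coefficients.
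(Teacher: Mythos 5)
Your proposal is correct and is essentially the paper's own proof: the paper performs the identical coefficient extraction, packaged as the double interior product $e_1\lhk e_{-1}\lhk \ed\Phi \equiv 0$ modulo $\eta_0$, whose $(0,1)$-components determine $\rho^{(0,1)}=\frac{\mo}{2}\delb A$ (the complex conjugate of your $(1,2)$-coefficient computation for $\rho^{(1,0)}$, equivalent by reality) and whose $\eta_j$- and $\eta_{k+1}$-components are exactly your $(3,0)$-coefficients of $\zeta\w\eta_1\w\eta_j$ and $\zeta\w\eta_1\w\om_{k+1}$. Both arguments invoke the reality of $\rho$ in the same way to pass between its $(1,0)$- and $(0,1)$-parts, so the two proofs agree up to bookkeeping.
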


\begin{proof}
We express $\rho$ in the standard coframe as 
\be\nonumber
\rho=\rho^0 \eta_0+ \rho^{-1}\zeta+ \rho^{\ol{-1}}\zetab+\sum_{i=1}^{k}( \rho^{i}\eta_i+ \rho^{\ol{i}}\etab_i).
\ee
Taking $\rho^0=0$, which we are free to do, we calculate 
\begin{align}\label{eq:hookhook}
0 &= e_{1} \lhk e_{-1} \lhk \ed \Phi \equiv \rho - \rho^{-1}\zeta - \rho^1 \eta_1 -\frac{\mo}{2} (\ed A -e_{-1}(A)\zeta - A_{u_0} \eta_1 )\\
& + \sum_{j=2}^k e_{-1}B^{1j}\eta_j + \sum_{j=2}^{k-1} B^{1j}_{}\eta_{j+1}+ B^{1k}\eta_{k+1}  \;\;\; {\rm mod}\;\; \eta_0. \nonumber
\end{align}
This implies that 
\[ \rho^{(0,1)}=\frac{\mo}{2}\delb A.  \] 
The first statement follows from the identities $\rho \equiv \rho^{(1,0)}+\rho^{(0,1)}\mod \eta_0$ and $\ol{\rho^{(0,1)}}=\rho^{(1,0)}$. The other two relations follow from the vanishing of the coefficients of $\eta_i,\eta_{k+1}$ in Equation \eqref{eq:hookhook}.
\end{proof}
\begin{lem}\label{lem:afromb}
We have $A_u=0$ and $A_{u_i,\ol{u_j}}=0$ for all $i,j$, i.e.~\eqref{eq:NoMixing} is true.
\comment{ \begin{enumerate}
\item $A_0=0$ \label{eq:nou}, $B^{ij}_0=0$\;\; {\rm for all}\; $i,j>0$ 
\item $A_{i,\ol{j}}=0$ for $i,j \geq 1$ \label{eq:anomixing}
\item $B^{ij}_{k+1}=0$ for $i>1$
\item $B^{ij}_{\ol{l}}=0$ for $1 \leq l \leq k+1$
\end{enumerate}
Thus the functions $B^{ij}$ only depend on the $u_i$, not on the $\ub_i$. }
\end{lem}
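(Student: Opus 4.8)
The plan is to isolate the part of $\ed\Phi=0$ that lies along $\eta_0$ by computing the $2$--form $\beta:=e_0\lhk \ed\Phi$ and setting it to zero. Since $e_0$ is dual to $\eta_0$ and annihilates $\zeta,\zetab$ and every $\eta_i,\etab_i$, this $\beta$ is exactly the coefficient of $\eta_0$ in $\ed\Phi$ and itself contains no $\eta_0$. I substitute the normal form \eqref{eq:NormalForm} together with $\rho=-\tfrac{\mo}{2}(\del A-\delb A)$ from Lemma \ref{lem:lemmaba} and expand
\[
\ed\Phi=\ed\eta_0\w\rho-\eta_0\w\ed\rho+\ed A\w\psi+A\,\ed\psi+\sum_{i<j}\bigl(\ed(B^{ij}\eta_i\w\eta_j)+\text{c.c.}\bigr),
\]
using $\ed\psi=-\mo f_u\,\eta_0\w\zeta\w\zetab$ and the structure equations. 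Tracking the $\eta_0$--coefficients: $A\,\ed\psi$ feeds only the $\zeta\w\zetab$ slot; the $\eta_0$--part $A_u\eta_0$ of $\ed A$ wedged with $\psi=-\tfrac{\mo}{2}(\zeta\w\eta_1-\zetab\w\etab_1)$ feeds the $\zeta\w\eta_1$ and $\zetab\w\etab_1$ slots; the $B$--terms feed only the holomorphic $\eta_i\w\eta_j$, the anti--holomorphic $\etab_i\w\etab_j$, and (through the summand $T^{i-1}_u\eta_0$ of $\tau^{i-1}$) the $\zetab\w\eta_j$ and $\zeta\w\etab_j$ slots; and $\ed\eta_0\w\rho$ carries no $\eta_0$ (as $\ed\eta_0=\zeta\w\eta_1+\zetab\w\etab_1$ and $e_0\lhk\rho=0$) and so drops out of $\beta$. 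Consequently every mixed slot $\eta_i\w\etab_j$ with $i,j\ge1$ in $\beta$ is fed by $-\eta_0\w\ed\rho$ alone.

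The no--mixing relations follow by reading off these mixed slots. In $\ed\rho$ the only $\eta_i\w\etab_j$ ($i,j\ge1$) terms come from differentiating the coefficient functions of $\rho$: the summand $-\tfrac{\mo}{2}A_{u_{i-1}}\eta_i$ produces $-\tfrac{\mo}{2}\sum_j A_{u_{i-1},\ub_{j-1}}\,\etab_j\w\eta_i$ and the summand $\tfrac{\mo}{2}A_{\ub_{i-1}}\etab_i$ produces $\tfrac{\mo}{2}\sum_j A_{\ub_{i-1},u_{j-1}}\,\eta_j\w\etab_i$, while the terms $A_{u_{i-1}}\ed\eta_i$, $A_{\ub_{i-1}}\ed\etab_i$ and those coming from $\ed(e_{-1}A)\w\zeta$ and $\ed(e_{\ol{-1}}A)\w\zetab$ contribute nothing mixed, because $\ed\eta_i$ contains no $\etab_l$ and $\ed\etab_i$ no $\eta_l$ with $l\ge1$. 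Combining the two sums via the symmetry $A_{u_{i-1},\ub_{j-1}}=A_{\ub_{j-1},u_{i-1}}$, the $\eta_i\w\etab_j$--coefficient of $\beta$ is $\mo A_{u_{i-1},\ub_{j-1}}$ up to sign, so $\beta=0$ gives $A_{u_{i-1},\ub_{j-1}}=0$ for all $i,j\ge1$, which is the no--mixing half of \eqref{eq:NoMixing}.

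For $A_u=0$ I read off the $\zeta\w\eta_1$--coefficient of $\beta$, to which there are two contributions. The first is $-\tfrac{\mo}{2}A_u$, directly from $\ed A\w\psi$. The second is $(-1)$ times the $\zeta\w\eta_1$--coefficient of $\ed\rho$, which itself receives exactly two pieces: $\ed\!\bigl(-\tfrac{\mo}{2}e_{-1}(A)\bigr)\w\zeta$ gives $\tfrac{\mo}{2}(e_{-1}A)_{u_0}\,\zeta\w\eta_1$, and $\ed\!\bigl(-\tfrac{\mo}{2}A_{u_0}\bigr)\w\eta_1$ gives $-\tfrac{\mo}{2}e_{-1}(A_{u_0})\,\zeta\w\eta_1$ (the remaining term $-\tfrac{\mo}{2}A_{u_0}\,\ed\eta_1$ has no $\zeta\w\eta_1$ part, since $\ed\eta_1=-\eta_2\w\zeta+f_u\,\eta_0\w\zetab$). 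The commutator
\[
[e_{-1},e_1]=-e_0,\qquad\text{equivalently}\qquad (e_{-1}A)_{u_0}=e_{-1}(A_{u_0})+A_u,
\]
which is the $j=0$ case of the identity $[e_{-1},e_{j+1}]=-e_j$ used in the proof of Lemma \ref{lem:tidentities}, makes the $e_{-1}(A_{u_0})$ pieces cancel and leaves $\tfrac{\mo}{2}A_u\,\zeta\w\eta_1$ for $\ed\rho$. Hence both contributions to $\beta$ equal $-\tfrac{\mo}{2}A_u$, the $\zeta\w\eta_1$--coefficient of $\beta$ is $-\mo A_u$, and $A_u=0$. The main obstacle I anticipate is the completeness bookkeeping -- confirming that nothing other than $-\eta_0\w\ed\rho$ reaches the mixed slots and that only the two displayed pieces reach $\zeta\w\eta_1$ -- together with the commutator cancellation; note that the two conclusions are logically independent, so neither half relies on the other.
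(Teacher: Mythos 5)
Your proof is correct and takes essentially the same approach as the paper: both arguments isolate the $\eta_0$--component of $\ed\Phi=0$ and read off the $\zeta\w\eta_1$ and $\eta_i\w\etab_j$ coefficients, closing the computation with the commutators $[e_{-1},e_1]=-e_0$ and $[e_i,e_{\ol{j}}]=0$. The paper records this more compactly (the coefficient of $\eta_0\w\zeta\w\eta_1$ is $\frac{\mo}{2}(-e_1e_{-1}A+e_{-1}e_1A-A_u)$ and that of $\eta_0\w\eta_i\w\etab_j$ is $\frac{\mo}{2}(-e_{\ol{j}}e_iA-e_ie_{\ol{j}}A)$); your contraction $e_0\lhk\ed\Phi$ and slot-by-slot bookkeeping is just a fully expanded version of the same computation.
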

\begin{proof} The coefficient of $\eta_0 \w \zeta \w \eta_1$ in $\ed \Phi$ is $\frac{\mo}{2}(-e_1e_{-1}A+e_{-1}e_1A -A_u)$. Together with the commutator $[e_{-1},e_{1}]A=-A_u$ this proves $A_u=0$. The coefficient of $\eta_0 \w \eta_i \w \etab_j$ is $\frac{\mo}{2}(-e_{\ol{j}}e_iA-e_ie_{\ol{j}}A)$. Combined with the commutator $[e_i,e_{\ol{j}}]=0$ this proves the second claim.
\end{proof}

The vanishing $A_u=0$ allows us to express $\rho$ exactly:
\begin{cor}
$\rho= -\frac{1}{2}J\ed A$, i.e.~\eqref{eq:rhobyA} is true.
\end{cor}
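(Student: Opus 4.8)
The plan is to promote the congruence $\rho \equiv -\frac{1}{2}J\ed A$ modulo $\eta_0$, already established in Lemma \ref{lem:lemmaba}, to an exact identity. Two one--forms that agree modulo $\eta_0$ can differ only by a function multiple of $\eta_0$, so the entire content of the corollary reduces to checking that the $\eta_0$--components of $\rho$ and of $-\frac{1}{2}J\ed A$ coincide.

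I would extract $\eta_0$--components by contracting with the dual frame field $e_0$, using that $e_0 \lhk \eta_0 = 1$ while $e_0$ annihilates $\zeta$, $\zetab$ and every $\eta_i, \etab_i$. On the left--hand side, the normalization $e_0 \lhk \rho = 0$ imposed in Proposition \ref{prop:NormalForm} (equivalently, the choice $\rho^0 = 0$ made in the proof of Lemma \ref{lem:lemmaba}) says precisely that the $\eta_0$--component of $\rho$ vanishes.

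On the right--hand side I would decompose $\ed A$ into its three coframe pieces: the $\eta_0$--component, the $(1,0)$--piece lying in $\Om^{(1,0)}(\M{\infty})$, and the $(0,1)$--piece lying in $\Om^{(0,1)}(\M{\infty})$. The $\eta_0$--coefficient of $\ed A$ is exactly $e_0 A = A_u$. Since $J$ acts as the identity on $\R \cdot \eta_0$ and multiplies the $(1,0)$-- and $(0,1)$--pieces by $\pm\mo$ (neither of which contributes anything in the $\eta_0$ direction), contraction gives $e_0 \lhk \left(-\frac{1}{2}J\ed A\right) = -\frac{1}{2}A_u$. By Lemma \ref{lem:afromb} we have $A_u = 0$, so this too vanishes.

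Hence both $\eta_0$--components are zero, and together with the congruence of Lemma \ref{lem:lemmaba} this forces $\rho = -\frac{1}{2}J\ed A$. The computation is entirely routine bookkeeping; the only point requiring genuine care is the action of $J$ on the $\eta_0$--line, where one must use that $J$ fixes $\eta_0$ rather than scaling it by $\pm\mo$, so that the $\eta_0$--coefficient of $J\ed A$ is genuinely $A_u$ and is annihilated exactly by the vanishing $A_u = 0$ supplied by Lemma \ref{lem:afromb}.
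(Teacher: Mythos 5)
Your proof is correct and is exactly the argument the paper leaves implicit: the paper's entire justification is the remark that ``the vanishing $A_u=0$ allows us to express $\rho$ exactly,'' i.e.~the congruence of Lemma \ref{lem:lemmaba} upgrades to equality because the normalization $e_0 \lhk \rho = 0$ kills the $\eta_0$--component of $\rho$ while $A_u=0$ kills the $\eta_0$--component of $-\frac{1}{2}J\ed A$. Your careful note that $J$ fixes $\eta_0$ (so the relevant coefficient really is $-\frac{1}{2}A_u$) is the right point to flag, and the bookkeeping is sound.
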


\begin{lem}\label{lem:BFormula}
The $B^{ij}$ are given by Equation \eqref{eq:BbyA}\upshape{:}
\begin{align*}
B^{ij}=\mo \sum_{m=0}^{k-j-i+1}(-1)^{m-i+1}{m+i-1 \choose i-1}(e_{-1})^{m}A_{u_{m+j+i-1}}.
\end{align*}
Therefore if $A$ is weighted homogeneous (cf.~\eqref{eq:wdegforms}), then $B^{ij}$ is weighted homogenous and $\wt(B^{ij})=\wt(A) -i-j$.
\end{lem}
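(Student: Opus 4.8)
The plan is to pin down the coefficients $B^{ij}$ by extracting further consequences of the closure condition $\ed\Phi=0$, beyond those already exploited in Lemma \ref{lem:lemmaba}. That lemma determines the $B^{1j}$ through the relations $A_{u_{j-1}}=-\mo(e_{-1}B^{1j}+B^{1,j-1})$ and $A_{u_k}=-\mo B^{1k}$; a descending induction on $j$ from the base value $B^{1k}=\mo A_{u_k}$ reproduces Formula \eqref{eq:BbyA} for $i=1$. The remaining task is to generate enough relations among the higher coefficients $B^{ij}$ ($i\ge 2$) to determine them, and then to check that \eqref{eq:BbyA} solves them.

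First I would read off two further families of coefficients of $\ed\Phi$ in the standard coframe, using the structure equations $\ed\eta_i=-\eta_{i+1}\w\zeta+\tau^{i-1}\w\zetab$ and $\ed\om_{k+1}=\tau^k\w\zetab+T^{k+1}\zeta\w\zetab$. For $2\le i<j\le k$ the coefficient of $\zeta\w\eta_i\w\eta_j$ collects $e_{-1}B^{ij}$ from the $\zeta$--part of $\ed B^{ij}$, the term $B^{i-1,j}$ from the $-\eta_i\w\zeta$ summand of $\ed\eta_{i-1}$, and the term $B^{i,j-1}$ from the $-\eta_j\w\zeta$ summand of $\ed\eta_{j-1}$; the pieces $\eta_0\w\rho$ and $A\psi$ are supported on forms containing $\eta_0$ or $\zeta\w\eta_1$ and hence contribute only when $i=1$, while the conjugate terms $\ol{B}^{lm}\etab_l\w\etab_m$ differentiate into $\zetab$-- and $\etab$--heavy forms. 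This gives the recursion
\be\label{eq:Brec}
e_{-1}B^{ij}+B^{i-1,j}+B^{i,j-1}=0\qquad(i\ge 2),
\ee
under the convention that $B^{ij}=0$ outside $1\le i<j\le k$. Second, the coefficient of the boundary form $\zeta\w\eta_p\w\om_{k+1}$ isolates $B^{pk}$ (which enters only through the $-\om_{k+1}\w\zeta$ summand of $\ed\eta_k$), forcing the base values $B^{pk}=0$ for $2\le p\le k-1$. Reading \eqref{eq:Brec} as a descending recursion $B^{i,j-1}=-e_{-1}B^{ij}-B^{i-1,j}$ in the second index, and combining it with these base values and the $i=1$ data of Lemma \ref{lem:lemmaba}, determines every $B^{ij}$ in terms of $A$.

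It remains to verify that \eqref{eq:BbyA} satisfies \eqref{eq:Brec} together with the base cases. The base values $B^{pk}=0$ for $p\ge 2$ are automatic, as the upper summation limit $k-j-i+1$ is then negative and the sum is empty. For \eqref{eq:Brec} I would substitute \eqref{eq:BbyA}, shift the summation index by one in the $e_{-1}B^{ij}$ term, align all three sums on the common terms $(e_{-1})^m A_{u_{m+i+j-2}}$, and collect coefficients; after factoring out the signs the vanishing reduces exactly to Pascal's rule ${m+i-2 \choose i-1}+{m+i-2 \choose i-2}={m+i-1 \choose i-1}$, the same binomial identity used in Lemma \ref{lem:tidentities} (the $m=0$ term checks separately and trivially). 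Finally, the weighted-homogeneity claim is immediate from the explicit formula: since $\wt(e_{-1}\vp)=\wt(\vp)+1$ for any homogeneous $\vp$ and $\wt(A_{u_l})=\wt(A)-(l+1)$, every summand $(e_{-1})^m A_{u_{m+i+j-1}}$ has weight $\wt(A)-i-j$ independently of $m$, whence $\wt(B^{ij})=\wt(A)-i-j$.

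The step I expect to be most delicate is the coefficient bookkeeping that produces \eqref{eq:Brec} and the base cases: keeping the signs straight when commuting $\zeta$ past the $\eta$'s, confirming that the $\eta_0\w\rho$ and $A\psi$ contributions really drop out for $i\ge 2$, and correctly handling the top level, where $\eta_{k+1}$ is unavailable and must be replaced by $\om_{k+1}$ so that the boundary coefficient yields the base case. Once the recursion and base cases are secured, the remaining verification is a routine induction governed by Pascal's rule.
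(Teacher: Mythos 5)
Your proposal is correct and follows essentially the same route as the paper: your recursion $e_{-1}B^{ij}+B^{i-1,j}+B^{i,j-1}=0$ is exactly the paper's relation \eqref{eq:Brecursive} after the index shift $j\mapsto j+1$ (the paper extracts it from the $\eta_0\w\zeta\w\eta_i\w\eta_{j+1}$--coefficient of $\eta_0\w\ed\Phi=0$, which also yields the base case $B^{ik}=0$ for $i>1$ for free, rather than from a separate $\zeta\w\eta_p\w\om_{k+1}$--coefficient), and the paper likewise combines this with the $i=1$ data of Lemma \ref{lem:lemmaba} and a double induction whose algebra reduces to the same application of Pascal's rule. Whether one phrases the final step as computing $B^{ij}$ by induction (the paper) or as verifying that \eqref{eq:BbyA} solves the recursion that uniquely determines the $B^{ij}$ (your version) is a purely cosmetic difference.
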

\begin{proof} 
First of all note that for $1<i \leq j\leq k$, the $\eta_0 \w \zeta \w \eta_i \w \eta_{j+1}$--coefficient in $\eta_0 \w \ed \Phi=0$ is
\be \label{eq:Brecursive}
B^{ij}+B^{i-1,j+1}+e_{-1}B^{i,j+1}=0.
\ee
In particular, $B^{ik}=0$ for $i>1$ which is compatible with the formula to be proven.

We prove the lemma by induction on $i$. For $i=1$, we have to show that for any $j=2\ldots k$
\be\label{eq:B1j}
B^{1j}=\mo \sum_{m=0}^{k-j} (-1)^m (e_{-1})^m A_{u_{m+j}},
\ee
which we prove by induction on $j$, going down from $k$ to $2$. For $j=k$, the right--hand side is $\mo A_{u_k}$, which equals $B^{1k}$ by the third item of Lemma \ref{lem:lemmaba}. Assume we have shown \eqref{eq:B1j} for some $j$, then by the second item of Lemma \ref{lem:lemmaba},
\begin{align*}
B^{1,j-1}&=\mo A_{u_{j-1}} - e_{-1}B^{1j}\\
&=\mo A_{u_{j-1}} + \mo \sum_{m=0}^{k-j} (-1)^{m+1} (e_{-1})^{m+1} A_{u_{m+j}}\\
&=\mo \sum_{m=0}^{k-j+1} (-1)^m (e_{-1})^m A_{u_{m+j-1}}.
\end{align*}
Assume now that $i>1$ is such that the formula for $B^{i-1,j}$ is true for all $j=1\ldots k$. We will prove the formula for $B^{ij}$ by induction on $j$ as in the case $i=1$. Above, we argued that it is correct for $j=k$, and assuming that $j$ is such that the formula for $B^{i,j+1}$ is correct we use \eqref{eq:Brecursive} to prove the formula for $B^{ij}$:
\begin{align*}
\mo B^{ij}&=-\mo B^{i-1,j+1}- \mo e_{-1}B^{i,j+1}\\
&=(-1)^iA_{u_{j+i-1}}\\
&+\sum_{m=1}^{k-j-i+1} (-1)^{m-i} \left[ {m+i-2\choose i-2} +{m+i-2\choose i-1}\right] (e_{-1})^m A_{u_{m+j+i-1}}\\
&=\sum_{m=0}^{k-j-i+1} (-1)^{m-i} {m+i-1 \choose i-1} (e_{-1})^mA_{u_{m+j+i-1}}.
\end{align*}
\end{proof}

The following unassuming corollary has important consequences unraveled in Section \ref{sec:Existence}.
\begin{cor} \label{cor:B1kvanishes} If $k$ is odd then $B^{1k}=A_{k+1}=0$.
\end{cor}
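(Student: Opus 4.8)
The plan is to reduce the statement to $A_{u_k}=0$ and then extract this vanishing from a single, well-chosen instance of the recursion \eqref{eq:Brecursive}. By the third identity of Lemma \ref{lem:lemmaba} we have $A_{u_k}=-\mo B^{1k}$, so $B^{1k}$ and $A_{u_k}$ vanish simultaneously and it suffices to prove $A_{u_k}=0$. The case $k=1$ is immediate: the index pair $(1,1)$ lies outside the range $1\le i<j\le k$, so $B^{11}=0$ by our convention on $B^{ij}$, whence $A_{u_1}=-\mo B^{11}=0$.

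So assume $k\ge 3$ is odd and set $p=\tfrac{k+1}{2}$, which is an integer \emph{precisely because} $k$ is odd and which satisfies $2\le p\le k-1$. The point is that $p$ is exactly the diagonal index at which the summation range in the closed formula \eqref{eq:BbyA} degenerates. First I would instantiate the recursion \eqref{eq:Brecursive} at $(i,j)=(p,p)$; since $B^{pp}=0$ (it is off the range $i<j$), this reads
\[
B^{p-1,p+1}+e_{-1}B^{p,p+1}=0.
\]
Both coefficients on the left are then evaluated with Lemma \ref{lem:BFormula}. For $B^{p,p+1}$ the upper summation limit in \eqref{eq:BbyA} is $k-(p+1)-p+1=k-2p=-1$, so the sum is empty and $B^{p,p+1}=0$; hence $e_{-1}B^{p,p+1}=0$ and the relation collapses to $B^{p-1,p+1}=0$. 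For $B^{p-1,p+1}$ the upper limit is $k-(p+1)-(p-1)+1=k-2p+1=0$, so only the $m=0$ term survives, giving
\[
B^{p-1,p+1}=\mo(-1)^{2-p}\binom{p-2}{p-2}A_{u_{2p-1}}=\mo(-1)^{2-p}A_{u_k},
\]
where I used $2p-1=k$. Since the prefactor is nonzero, combining the two evaluations yields $A_{u_k}=0$, and therefore $B^{1k}=\mo A_{u_k}=0$.

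The only delicate points are the bookkeeping at the endpoints of the range in \eqref{eq:BbyA}: one must confirm that the pairs $(p,p+1)$ and $(p-1,p+1)$ genuinely sit inside the admissible range $1\le i<j\le k$ of Lemma \ref{lem:BFormula} (they do, since $2\le p\le k-1$ forces $p+1\le k$ and $p-1\ge 1$), and that \eqref{eq:Brecursive} is legitimately available at $(p,p)$ (it is, as $1<p\le k$). I do not expect any genuine obstacle in the computation; the real content—and the conceptual heart of the corollary—is recognizing that the parity of $k$ is exactly what decides whether $\tfrac{k+1}{2}$ is an integer, hence whether there exists a diagonal index $(p,p)$ at which the recursion forces the collapsed formula \eqref{eq:BbyA} onto the single top coefficient $A_{u_k}$. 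For even $k$ no such integer index occurs, the argument produces no constraint, and indeed $B^{1k}$ need not vanish, in agreement with new conservation laws appearing only at even prolongations.
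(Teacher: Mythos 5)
Your proof is correct and follows essentially the same route as the paper's: both exploit that when $i+j=k+1$ the sum in \eqref{eq:BbyA} collapses to the single term $\pm\mo A_{u_k}$, and that for odd $k$ the diagonal index $i=j=\tfrac{k+1}{2}$ forces this term to vanish, after which $B^{1k}=\mo A_{u_k}=0$. The only difference is one of bookkeeping: the paper evaluates \eqref{eq:BbyA} directly at the diagonal pair $(n,n)$ and invokes $B^{nn}=0$, whereas you obtain the same constraint one step off the diagonal by combining the recursion \eqref{eq:Brecursive} at $(p,p)$ with evaluations of \eqref{eq:BbyA} at $(p,p+1)$ and $(p-1,p+1)$ — a slightly more careful variant, since Lemma \ref{lem:BFormula} is stated only for the range $1\le i<j\le k$.
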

\begin{proof} For $i+j=k+1$, Lemma \ref{lem:BFormula} gives
\[
B^{ij}=(-1)^{i+1} \mo  A_{u_k}.
\]
Writing $k+1=2n$ and choosing $i=j=n$, $B^{n,n}=0$ implies that $A_{u_k}=0$ and consequently, taking $i=1$ and $j=k$, that $B^{1k}=0$.
\end{proof}

Finally we deduce \eqref{eq:ED}.  The coefficient of $\eta_0 \w \zeta \w \zetab$ in $\ed \Phi=0$ is
\[
e_{-1}e_{\ol{-1}}A+e_{\ol{-1}}e_{-1}A+2f_u A=0.
\]
Since we are working on $\M{\infty}$, we have $[e_{-1},e_{\ol{-1}}]A=0$, which allows us to rewrite this as
\[
e_{\ol{-1}}e_{-1}A+f_u A=0.
\] 
This completes the proof of Proposition \ref{prop:NormalForm}.

\section{Homogeneity and a normal form for undifferentiated conservation laws}\label{sec:Homogeneous}
Exterior differentiation commutes with the decomposition 
\[
\Om^p(\M{\infty},\C)=\bigoplus_{d\in \Z} \Om^p_{d }(\M{\infty}),
\]
where the space $\Omega^p_d(\M{\infty})$ of differential $p$--forms of homogeneous weighted degree $d$ was defined in \eqref{eq:wdegforms}. Let $\mcc_{d} $ be the image of the projection $\pi_d$ from $\mcc \subset \Om^2(\M{\infty},\R)\subset \Om^2(\M{\infty},\C)$ to $ \Om^2_d(\M{\infty})$. 
\begin{lem} The $\mcc_d$ are complex subspaces of $\Omega^2_d(\M{\infty})$. Furthermore, 
$\mcc\otimes\C = \bigoplus_d \mcc_{d}$ and if $\Phi_d\in \mcc_d$, then $\Phi_d+\ol{\Phi_d}\in \mcc$.
\end{lem}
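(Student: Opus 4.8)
The plan is to read off everything from three structural facts: that $\mcc$ is a real vector space (a real-linear subspace of $\Om^2(\M{\infty},\R)$, being the intersection of the linear condition ``normal-form shape'' with the linear condition ``closed''), that the weighted-degree grading is compatible with \emph{every} ingredient defining $\mcc$, and that complex conjugation interchanges the weight-$d$ and weight-$(-d)$ summands. The linchpin I would isolate first is this: if $\Phi\in\mcc$ has homogeneous decomposition $\Phi=\sum_d\Phi_d$ with $\Phi_d=\pi_d\Phi$, then the \emph{real} forms $\Phi_d+\Phi_{-d}$ and $i(\Phi_d-\Phi_{-d})$ both again lie in $\mcc$.

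To establish the linchpin I would verify the defining properties of $\mcc$ one at a time. Closedness is immediate, since $\ed$ preserves weighted degree (the remark just above the lemma), so $\ed\Phi=0$ gives $\ed\Phi_d=\pi_d\ed\Phi=0$ and hence any combination of the $\Phi_d$ is closed. Membership in the ideal follows because $\mci^{(\infty)}$ is a \emph{homogeneous} ideal: its generators $\eta_0,\eta_j,\etab_j$ are weight-homogeneous of weights $0,j,-j$, so $\Phi\in\mci^{(\infty)}_{\C}$ forces each $\Phi_d\in\mci^{(\infty)}_{\C}$, and a real such combination lies in $\mci^{(\infty)}$. The normal-form shape is also preserved: a normal form is a function-coefficient combination of the weight-homogeneous standard $2$-forms $\psi$, $\eta_i\w\eta_j$, $\etab_i\w\etab_j$ and $\eta_0\w\theta$ (for coframe $1$-forms $\theta$), so $\pi_d$ only projects the coefficients onto suitable weights and introduces no mixed $\eta_i\w\etab_j$ terms. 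Finally, reality of $\Phi$ gives $\overline{\Phi_d}=\pi_{-d}\overline{\Phi}=\Phi_{-d}$; in particular the $\psi$-coefficient of $\Phi_d+\Phi_{-d}$ is $A_d+\overline{A_d}$, which is real, and the $\etab_i\w\etab_j$-coefficients stay conjugate to the $\eta_i\w\eta_j$-coefficients. Thus $\Phi_d+\Phi_{-d}$ is a genuine real normal form, i.e.\ an element of $\mcc$, and the identical bookkeeping handles $i(\Phi_d-\Phi_{-d})$.

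With the linchpin in hand the three assertions are formal. For $\mcc\otimes\C=\bigoplus_d\mcc_d$: the inclusion $\supseteq$ follows from $\Phi_d=\tfrac12\big[(\Phi_d+\Phi_{-d})-i\cdot i(\Phi_d-\Phi_{-d})\big]$, which exhibits $\Phi_d$ as a complex combination of two elements of $\mcc$; the inclusion $\subseteq$ is clear from $\Phi=\sum_d\pi_d\Phi$; and directness is inherited from the directness of $\bigoplus_d\Om^2_d(\M{\infty})$. Claim $3$ is the special case $\Phi_d+\overline{\Phi_d}=\Psi_d+\Psi_{-d}\in\mcc$ when $\Phi_d=\pi_d\Psi$. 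For the claim that $\mcc_d$ is a complex subspace it suffices to show it is closed under multiplication by $i$, and applying $\pi_d$ to the element $i(\Psi_d-\Psi_{-d})\in\mcc$ returns precisely $i\Phi_d$, so $i\Phi_d\in\mcc_d$.

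The single point where this purely formal machinery stalls is $d=0$, and I expect it to be the real obstacle. A weight-$0$ component of a real form is itself real, so $\mcc_0\subseteq\Om^2_0(\M{\infty},\R)$, and a complex subspace contained in the real forms must be zero; hence claim $1$ at $d=0$ is exactly the statement $\mcc_0=0$, the non-existence of weight-$0$ conservation laws. This is genuinely analytic rather than formal: it is the weight-$0$ instance of the vanishing of all conservation laws of nonzero even weighted degree, which comes from the explicit structure of the generating function together with the hypothesis that $f$ satisfies no first-order ODE (carried out in Section~\ref{sec:Existence}). Everything else -- claims $2$ and $3$, and claim $1$ for $d\neq 0$ -- rests only on the grading compatibilities and the reality of $\mcc$, and in particular never needs the explicit formula \eqref{eq:BbyA}.
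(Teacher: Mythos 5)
Your formal core coincides with the paper's own proof: from reality of $\Phi$ and the fact that summands of different weighted degree cannot cancel, one gets $\ol{\Phi_d}=\Phi_{-d}$, hence $\Phi_d+\ol{\Phi_d}\in\mcc$ and $\mo(\Phi_d-\ol{\Phi_d})\in\mcc$, and then the complex-subspace property and the decomposition $\mcc\otimes\C=\bigoplus_d\mcc_d$ follow exactly as you describe. Your write-up is, if anything, more explicit than the paper's about why closedness, ideal membership and the normal-form shape survive the projection $\pi_d$. You are also right that the uniform argument stalls at $d=0$: there $\ol{\Phi_0}=\Phi_0$, so $b\Phi_0+\ol{b\Phi_0}=2\Re(b)\Phi_0$ and projecting cannot recover $b\Phi_0$. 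This is a point the paper's proof (``for any $b\in\C$ it follows that $b\Phi_d+\ol{b\Phi_d}\in\mcc$, so $b\Phi_d\in\mcc_d$'') passes over in silence.

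The gap is in your resolution of that case. You correctly reduce claim 1 at $d=0$ to the statement $\mcc_0=0$, but that statement is false, and the result you invoke cannot prove it: Lemma \ref{lem:Vdevenvanishes} (equivalently Theorem \ref{thm:MainTheorem}) asserts $V_d=0$ only for even $d\neq 0$, and weight zero is explicitly excepted --- by Lemma \ref{lem:V0} and Theorem \ref{thm:MainTheorem}, $V_0$ is spanned by $q=zu_0-\zb\ub_0\neq 0$, so ``the weight-$0$ instance of the vanishing of conservation laws of nonzero even weighted degree'' is a contradiction in terms. Concretely, the classical conservation law of Section \ref{sec:Classical} with $a=0$, $b\neq 0$ (generating function $A=\mo b q$, whose potential $\vp_0$ is written in \eqref{eq:wd0}) is a nonzero, real, weight-zero homogeneous element of $\mcc$, hence a nonzero element of $\mcc_0$; since, as you observe, $\mcc_0$ consists of real forms, it is a nonzero real line and not a complex subspace. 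So the weight-zero case cannot be closed by any vanishing argument: taken literally, claim 1 fails at $d=0$, and the lemma has to be read with weight zero excepted (the paper itself treats it separately elsewhere, e.g.\ setting $\mch^1_0=\R\cdot\vp_0$, a \emph{real} span). Note also that the defect propagates further than you allow: the inclusion $\mcc\otimes\C\subseteq\bigoplus_d\mcc_d$ needs each $\mcc_d$, including $\mcc_0$, to absorb multiplication by $\mo$, so claim 2 is not independent of claim 1 at $d=0$; indeed $\mo$ times the classical weight-zero law lies in $\mcc\otimes\C$ but not in $\bigoplus_d\pi_d(\mcc)$.
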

\begin{proof}
As mentioned above, any representative of a differentiated conservation law in normal form can be decomposed into weighted homogeneous pieces, so we have $\mcc\subset \bigoplus_d \mcc_{d}$. 

Let $\Phi_d \in \mcc_d$. By definition, $\Phi_d=\pi_d(\Phi)$ for some representative $\Phi\in \mcc$. Since $\Phi$ is a real--valued form and $\wt(\Psi)=-\wt(\ol{\Psi})$ for all weighted--homogeneous forms $\Psi$, we have $\pi_{-d}(\Phi)=\ol{\Phi_d}$. Since summands of different weighted degree cannot cancel, it follows that $\Phi_d+\ol{\Phi_d}$ is in normal form and hence an element of $\mcc$. But then, for any $b\in \C$ it follows that $b\Phi_d+\ol{b\Phi_d}\in \mcc$, so $b\Phi_d\in \mcc_d$, and $\mcc_d$ is a complex subspace. 

In the last argument, taking $b=\mo$ implies that $\mo \Phi_d-\mo \ol{\Phi_d} \in \mcc$, so that $ \Phi_d- \ol{\Phi_d} \in \mcc \otimes \C$.  Therefore $\Phi_d=\frac12(\Phi_d+\Phi_d)+\frac12(\Phi_d-\Phi_d)\in \mcc\otimes \C$ and $\bigoplus_d \mcc_d \subset \mcc \otimes \C$.  From  $\mcc \subset \bigoplus_d \mcc_{d}$ it follows that $\mcc\otimes\C \subset \bigoplus_d \mcc_{d}$ and so we can conclude that $\mcc\otimes\C = \bigoplus_d \mcc_{d}$.
\end{proof}

Given a conservation law $\Phi\in \mcc$ in normal form 
\[
\Phi  =\eta_0 \w \rho + A \psi + B^{ij}\eta_i \w \eta_j +\ol{B}^{ij}\etab_i \w \etab_j
\]
as in Proposition \ref{prop:NormalForm}, and writing $A=\sum_{d \geq 0}(P_d+\ol{P_d})$ with $\wt(P_d)=d$ then 
\[ \Phi_{P_d}:=\pi_d(\Phi)=\eta_0 \w \rho+{P_d}\psi + B^{ij}(P_d)\eta_i \w \eta_j  +\ol{B^{ij}(\ol{P_d})}\etab_i\w\etab_j \in \mcc_{d}\]
with $\rho=-\frac{1}{2}J \ed P_d$ and $B^{ij}(P_d)$ and $B^{ij}(\ol{P_d})$ being given by \eqref{eq:BbyA} using $P_d$, resp.~$\ol{P_d}$, in place of $A$ in the formula.  Using the weighted homogeneity of $\Phi_{P_d}$ we will produce a canonical representative of a class in $\hb_{\C}$ from the normal form of a class in $\mcc_{d}$.  To simplify notation we drop the subscript $d$ on $P$ but continue to assume that $\wt(P)=d$.

Let $F$ be the $\s{1}$--action defined in Equation \eqref{eq:Symmetry} and $v=\left.\frac{dF}{dt}\right|_{t=0}$ where $\lambda=e^{it}$.  One can calculate directly that
\be\nonumber
v=i\left(q e_0  +\zb e_{\ol{-1}} -z e_{-1} +(e_{-1})^j(q)e_j+ (e_{\ol{-1}})^j(q)e_{\ol{j}} \right),
\ee
where  $q=zu_0-\overline{zu_0}$.
For $\wt(P)=d\neq 0$, define 
\begin{equation}\label{eq:phiP}
\vp_P=\frac{1}{d} \left( v \lhk \Phi_P \right).
\end{equation}

\begin{lem}\label{lem:ifPhiclosed}
If $\wt(P)=d \neq 0$ and $\ed \Phi_P=0$ then $\Phi_P=\ed \vp_P$.
\end{lem}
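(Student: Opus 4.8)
The plan is to use Cartan's magic formula relating the Lie derivative to the contraction and the exterior derivative. Since $v$ generates the $\s{1}$--action $F$, and $\Phi_P$ is weighted homogeneous of degree $d$ (meaning $F^*\Phi_P = \lambda^d \Phi_P$ with $\lambda = e^{it}$), the Lie derivative of $\Phi_P$ along $v$ must recover the infinitesimal action, namely $\mcl_v \Phi_P = \mo\, d\, \Phi_P$. This is the conceptual heart of the argument: the $\s{1}$--symmetry is exactly what converts the homogeneity of $\Phi_P$ into a statement that lets us divide by $d$.

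First I would verify the normalization of the infinitesimal action carefully. Since $\vp_P = \frac{1}{d}(v \lhk \Phi_P)$ and we want to conclude $\Phi_P = \ed \vp_P$, I would apply Cartan's formula
\[
\mcl_v \Phi_P = v \lhk \ed \Phi_P + \ed(v \lhk \Phi_P).
\]
By hypothesis $\ed \Phi_P = 0$, so the first term drops out, leaving $\mcl_v \Phi_P = \ed(v \lhk \Phi_P) = d\, \ed \vp_P$. Thus the claim reduces entirely to establishing
\[
\mcl_v \Phi_P = d\, \Phi_P.
\]

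To prove this last identity I would differentiate the defining relation $F^*\Phi_P = \lambda^d \Phi_P$ with respect to $t$ at $t = 0$, where $\lambda = e^{it}$. By definition of the Lie derivative along the generator $v = \left.\frac{dF}{dt}\right|_{t=0}$, the left--hand side gives $\mcl_v \Phi_P$, while differentiating $\lambda^d = e^{i d t}$ and evaluating at $t=0$ produces the factor $\mo\, d$ times $\Phi_P$. A small point of bookkeeping to watch is the factor of $\mo$: the definitions in \eqref{eq:phiP} and the formula for $v$ already carry explicit factors of $i$, so I would check that these combine consistently with the $\mo\, d$ coming out of the homogeneity relation to yield exactly $\Phi_P = \ed \vp_P$ with no stray constant. (Here one must also confirm that $\Phi_P$ is genuinely in $\Om^2_d$, which is immediate since $\Phi_P = \pi_d(\Phi)$ is by construction the weighted--degree--$d$ component.)

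The main obstacle I anticipate is not conceptual but one of justification: to apply Cartan's formula and the Lie--derivative characterization of weighted degree rigorously on the infinite--dimensional manifold $\M{\infty}$, I would want to confirm that $v$ is a well--defined vector field whose flow is exactly $F$, and that $\Phi_P$ descends from a finite prolongation $\M{k}$ so that all the calculus takes place on a genuine finite--dimensional manifold. Because $\Phi_P$ lies in some $\mcc_{(k)}$ (it is the pullback of a form on $\M{k}$) and $v$ restricts to a vector field on each $\M{k}$ compatible with the projections $\pi_k$, these formal manipulations are legitimate; the grading $\ed:\Om^p_d \to \Om^{p+1}_d$ preserving weighted degree (noted after \eqref{eq:wdegforms}) guarantees that $\ed \vp_P$ again lands in $\Om^2_d$, so the equality of weighted--homogeneous forms is consistent.
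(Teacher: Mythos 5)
Your proposal is correct and takes essentially the same route as the paper, whose entire proof is the one-line chain $d \cdot \Phi_P=\frac{\partial (F^*\Phi_P)}{\partial t}\big\vert_{t=0}=\mcl_v \Phi_P=\ed (v \lhk \Phi_P)$, i.e.\ exactly your combination of Cartan's formula (using $\ed\Phi_P=0$) with differentiation of the homogeneity relation $F^*\Phi_P=\lambda^d\Phi_P$. The factor-of-$\mo$ bookkeeping you flag is a legitimate point of care — the paper's displayed chain silently drops the $\mo$ arising from $\lambda=e^{\mo t}$ — but it is a normalization issue only and does not affect the substance of the argument.
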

\begin{proof}
Suppose $\Phi_p$ is closed and homogeneous. Then
\[
d \cdot \Phi_P=\dd{(F^*\Phi_P)}{t}\vert_{t=0}=\mcl_v \Phi_P=\ed (v \lhk \Phi_P).
\]
\end{proof}
The formulas for $\vp_P$ and $v$ lead to
\be\label{eq:phiexplicit}
\vp_P \equiv \frac{\mo}{2d}J\left( P \ed q- q \ed P  \right) \;\;\;{\rm mod} \;\; \I{\infty},
\ee
which we use in Lemma \ref{lem:Existence}.
\begin{rem}
It is simple to check that, given any function $G$ on $\M{\infty}$ satisfying \eqref{eq:ED} \us{(}but not necessarily \eqref{eq:NoMixing}\us{)}, then $[J(q\ed G-G \ed q)] \in \hb$.  It remains to show that it is a nontrivial element.  Furthermore, one still obtains a conservation law if one replaces $q$ with any solution to \eqref{eq:ED}.  This structure is closely related to the Poisson bracket defined in Theorem 4 of \cite{Shadwick1980} though we do not pursue this further here.
\end{rem}

We can now define canonical representatives for elements of $\hb$.  For $d \neq 0$ let $\mch_d^1$ be the image of the linear map
\begin{align*}
\mcc_d &\to \Om_d^1(\M{\infty},\C)\\
\Phi_P &\mapsto \vp_P
\end{align*}
and let $\mch^1_0 = \R \cdot \vp_0$, where $\vp_0$ is defined in \eqref{eq:wd0}.  Then let $\mch_{\C}^1=\bigoplus_d \mch^1_d$ and $\mch^1=\mch^1_{\C} \cap \Om^1(\M{\infty},\R)$.  
\begin{defn}
The {\bf  normal form} for an undifferentiated conservation law in $\hb$ is the representative $\vp \in \Om^1(\M{\infty})$ lying in $\mch^1$.  
\end{defn}
\begin{rem}
It would be interesting to find a definition of $\mch^1$ that is independent of $\mcc$. 
\end{rem}
\begin{rem}
The elements of $\mch^1$ are not invariant under translations in a lattice in the $z$--plane, even if $u(z,\zb)$ is.  One can prove that there are translation invariant representatives that therefore induce cohomology classes on the torus domains of doubly periodic solutions $u(z,\zb)$.  We will report on this and its implications in a forthcoming article. 
\end{rem}

\section{The space of conservation laws}\label{sec:Existence}
So far we have seen that $u_0$ and $q=z u_0 -\zb \ub_0$ are solutions to Equations \eqref{eq:NoMixing} and \eqref{eq:ED}.\comment{\footnote{We regard these as equations on $\M{\infty}$ though one could equally consider them on any $\M{k}$ with $k>l$ and $A$ defined on $\M{l}$.}}   These equations preserve weighted homogeneity and so to understand their solutions it is enough to understand the weighted homogeneous solutions.  
\begin{defn}
Let $V_d$ be the space of solutions to $\mce(P)=0$ (Equation \eqref{eq:ED}) of weighted degree $d$ that also satisfy $P_{u_i,\ol{u}_j}=P_u=0$.
\end{defn}
\begin{exam}
It is easy to check that $u_0 \in V_1$, $\ub_0 \in V_{-1}$, and $q \in V_0$.
\end{exam}

In this section we prove\comment{\marginpar{Is the space $\hb_d$ well define? only ask for homogeneity mod ideal?  or do hodge theory for it too?}}
\begin{thm}\label{thm:MainTheorem}
Suppose that $f$ does not satisfy a first--order ODE.  Then:
\begin{enumerate}
\item $V_0$ is spanned by $q$. If $d$ is a nonzero even integer, then $V_d=0$. If $d$ is odd, then $\dim V_d\leq 1$.
\item For all $d$ we have isomorphisms
\begin{align*}
&V_d \to \mch^1_d \to \mcc_d\\
&P \mapsto \vp_P \mapsto  \Phi_P,
\end{align*} 
where $\vp_P$ is defined as in Section \ref{sec:Homogeneous}, and the second map is just the exterior derivative.
\item $\dim_{\R}(\mcc_{(2n+1)}/\mcc_{(2n)}) =0$.
\item $\dim_{\R}(\mcc_{(2n+2)}/\mcc_{(2n)}) \leq 2$ with equality if and only if $\dim_{\C}(V_{2n+3})=1$.
\end{enumerate}
\end{thm}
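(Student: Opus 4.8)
The plan is to complexify and exploit the weighted--degree grading of Section \ref{sec:Homogeneous}. By the lemma there, $\mcc\otimes\C=\bigoplus_d\mcc_d$, and by part (2) each $\mcc_d\cong V_d$. Since the $\s{1}$--action $F$ of \eqref{eq:Symmetry} rescales the coordinates $u_j$ but never changes \emph{which} of them a form involves, each level space $\mcc_{(k)}$ is $F$--invariant, so the filtration is compatible with the grading:
\[
\mcc_{(k)}\otimes\C=\bigoplus_d\big(\mcc_{(k)}\otimes\C\cap\mcc_d\big).
\]
Because $\dim_\C\mcc_d=\dim_\C V_d\leq 1$, each summand either lies wholly in $\mcc_{(k)}$ or meets it only in $0$, according to whether the generator $\Phi_P$ of $\mcc_d$ is defined on $\M{k}$.

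First I would pin down the level of a generator. For $P\in V_d$ let $N(P)$ be the largest index with $P_{u_N}\neq 0$. The formulae \eqref{eq:rhobyA} and \eqref{eq:BbyA} express $\rho=-\frac{1}{2}J\ed P$ and the $B^{ij}$ through $A=P$ using only $e_{-1}^{N}$, the coordinates $u_0,\dots,u_N$, and the form $\om_{N+1}$, all of which live on $\M{N}$; moreover the coefficient $P$ of $\psi$ genuinely involves $u_N$, so $\Phi_P$ is not a pull-back from $\M{N-1}$ and hence has level exactly $N(P)$. Writing $\ell(d):=N(P)$ for a generator of $V_d$ we obtain $\mcc_{(k)}\otimes\C=\bigoplus_{\ell(d)\leq k}\mcc_d$, whence
\[
\mcc_{(2n+2)}\otimes\C\big/\mcc_{(2n)}\otimes\C=\bigoplus_{2n<\ell(d)\leq 2n+2}\mcc_d.
\]
By Corollary \ref{cor:B1kvanishes} (the statement underlying part (3)), $A_{u_k}=0$ whenever $k$ is odd, so no generator can have odd top order; thus $\ell(d)$ is always even and only the index $\ell(d)=2n+2$ survives in the sum.

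The crux is the identity $\ell(d)=d-1$ for every $d$ with $V_d\neq 0$. A monomial of $P$ containing $u_N$ has weighted degree at least $N+1$, so $\ell(d)=N\leq d-1$ for free; the work is the reverse inequality. I would obtain it from a top--order (symbol) analysis of $\mce(P)=e_{\ol{-1}}e_{-1}P+f_uP=0$: comparing coefficients of the highest power of $u_N$ and using $f_u\neq 0$ forces $P$ to be linear in $u_N$, and comparing the $u_N^2$--coefficient then yields $e_{\ol{-1}}\big(P_{u_{N-1}}\big)=0$ on the top slot. The decisive input, and the main obstacle, is the sub--lemma that $e_{\ol{-1}}h=0$ admits no nonconstant \emph{holomorphic} polynomial solution: already in weighted degrees $1$ and $2$ this vanishing forces $f\equiv 0$ or $f_u=\lambda f$, both excluded since $f$ satisfies no first--order ODE. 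Iterating this symbol computation reduces the leading part of $P$ to a nonzero constant multiple of $u_{d-1}$, so that $\ell(d)=d-1$. This top--order structure is exactly what drives the proof of part (1), which characterizes the elements of $V_d$ as holomorphic polynomials determined by their $u_{d-1}$--coefficient; granting that characterization, $\ell(d)=d-1$ is immediate.

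Consequently $\ell(d)=2n+2$ exactly when $d=\pm(2n+3)$. Since $\mce$ commutes with complex conjugation on $\M{\infty}$ (because $[e_{-1},e_{\ol{-1}}]=0$ there and $f_u$ is real) and $\wt(\ol P)=-\wt(P)$, we have $V_{-(2n+3)}=\ol{V_{2n+3}}$, so
\[
\mcc_{(2n+2)}\otimes\C\big/\mcc_{(2n)}\otimes\C=\mcc_{2n+3}\oplus\mcc_{-(2n+3)}\cong V_{2n+3}\oplus\ol{V_{2n+3}}.
\]
Taking complex dimensions, $\dim_\R(\mcc_{(2n+2)}/\mcc_{(2n)})=\dim_\C\big(\mcc_{(2n+2)}\otimes\C/\mcc_{(2n)}\otimes\C\big)=2\dim_\C V_{2n+3}$, which is at most $2$ by part (1) and equals $2$ precisely when $\dim_\C V_{2n+3}=1$. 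This is the assertion of part (4).
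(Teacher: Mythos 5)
Your bookkeeping for items (3) and (4) --- the $\s{1}$--invariance of the level filtration, the identity $\ell(d)=d-1$, the conjugation symmetry $V_{-d}=\ol{V_d}$, and the count $\dim_{\R}(\mcc_{(2n+2)}/\mcc_{(2n)})=2\dim_{\C}V_{2n+3}$ --- is correct and is essentially how the paper deduces these items (it dismisses them as ``immediate consequences of the second item''). The genuine gap is that you assume exactly the two items, (1) and (2), that carry all the content of the theorem. For item (1), you correctly isolate the decisive sub-lemma that $e_{\ol{-1}}$ acting on polynomials in the $u_i$ has only constants in its kernel (Lemma \ref{lem:eMinusOneKernel}), but verifying it ``in weighted degrees $1$ and $2$'' is not a proof: the paper's argument is an induction over a lexicographic ordering of monomials, producing from a putative highest coefficient a relation $h_{I_0}i_lf_u+h_{I_1}cf=0$ and invoking the linear independence of $f$ and $f_u$. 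Item (1) also requires more than the kernel lemma: one must first eliminate mixed and $z$--dependent terms and establish the shape $P=bu_{d-1}+\dots$ (Lemmas \ref{lem:NoZMixing} through \ref{lem:shapeofP}); that shape is precisely the identity $\ell(d)=d-1$ on which your filtration argument rests, so it cannot be ``granted.''

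The more serious omission is item (2), which you cite but never address. This is the existence half of the theorem: that a solution $P$ of \eqref{eq:NoMixing} and \eqref{eq:ED} actually yields a \emph{closed} form $\Phi_P$ representing a nontrivial class. Nothing in Section \ref{sec:AlgebraicForm} gives closure --- Proposition \ref{prop:NormalForm} only says what a conservation law must look like if it exists. The paper deliberately avoids the ``elaborate'' direct verification of $\ed\Phi_P=0$ by a homogeneity trick: it forms the $1$--form $\vp_P$, proves that $[\vp_P]\in\hb_{\C}$ is nonzero via the explicit recursion for the coefficients $a^j$ culminating in the contradiction $(-1)^d d\neq 1$ (Lemma \ref{lem:Existence}), and only then concludes $\ed\vp_P=\Phi_P$ using $\dim_{\C}V_d\leq 1$ (Lemma \ref{lem:dphi=Phi}). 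Without this step or a substitute for it, your items (3) and (4) are conditional: the upper bounds survive on the strength of item (1) alone, but the equality criterion in item (4) --- that $\dim_{\C}V_{2n+3}=1$ forces new conservation laws to exist at level $2n+2$ --- is exactly the unproved existence statement.
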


We prove this theorem via the series of Lemmas \ref{lem:NoZMixing}--\ref{lem:Vdevenvanishes}.  Let $P\in V_d$ and write $P={U}(z,\zb,u_j)+{V}(z,\zb,\ub_j)+R(z,\zb)$, such that neither ${U}$ nor ${V}$ have any terms that don't involve at least one $u_j$ or $\ub_j$. 
We calculate that
\begin{align}
\mce({U})&=f_u{U} +{U}_{z \zb}+ u_{j+1}{U}_{u_j,\zb}-T^l \dd{}{u_l}\left({U}_z+u_{j+1}{U}_{u_j} \right) \label{eq:edU}\\
\mce({V})&=f_u{V} +{V}_{z \zb}+ \ub_{j+1}{V}_{\ub_j,z}-\ol{T}^l \dd{}{\ub_l}\left({V}_{\zb}+\ub_{j+1}{V}_{\ub_j} \right) \label{eq:edV}
\end{align}

\begin{lem}\label{lem:NoZMixing}
${U}_{\zb}={V}_z=R=0$. 
\end{lem}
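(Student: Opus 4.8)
The plan is to exploit the explicit expressions \eqref{eq:edU} and \eqref{eq:edV} for $\mce(U)$ and $\mce(V)$ together with the weighted-homogeneity of $P$ and the assumption that $f$ does not satisfy a first-order ODE. First I would observe that $\mce(P)=0$ decomposes according to the type of dependence: $\mce(U)$ involves only $z,\zb$ and the holomorphic derivatives $u_j$, while $\mce(V)$ involves only $z,\zb$ and the anti-holomorphic $\ub_j$, and $\mce(R)=f_u R + R_{z\zb}$ depends only on $z,\zb$. Because these three groups of expressions live in separate function spaces (no monomial of one type can cancel a monomial of another, apart from the purely $(z,\zb)$-dependent pieces), the equation $\mce(P)=0$ should force $\mce(U)$, $\mce(V)$, and the remaining scalar part to vanish essentially independently. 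The goal is then to show that the cross terms $U_{\zb}$, $V_z$, and the pure term $R$ must all be zero.

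The key mechanism is the appearance of $f_u$, $T^l$, and $\ol{T}^l$, which are functions of $u$ (and its derivatives) that are \emph{algebraically independent} of the coordinates $z,\zb,u_j,\ub_j$ precisely because $f$ satisfies no first-order ODE. Concretely, I would look at the $T^l \,\partial_{u_l}(\,\cdots)$ term in \eqref{eq:edU}: since $T^l=(e_{-1})^l f$ carries genuine $u$-dependence that cannot be expressed in terms of lower-order data when $f$ obeys no first-order ODE, the coefficient of the independent quantity involving $U_{\zb}$ (via the $U_z + u_{j+1}U_{u_j}$ structure and its $\zb$-derivative counterpart $U_{u_j,\zb}$) must vanish on its own. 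I expect that isolating the terms in $\mce(U)$ that are linear in the $\zb$-derivatives of $U$ — namely $U_{z\zb}$ and $u_{j+1}U_{u_j,\zb}$ — and noting these are the only places $U_{\zb}$ enters, forces $U_{\zb}=0$ after using the algebraic independence granted by the hypothesis on $f$. The argument for $V_z=0$ is the complex-conjugate mirror image, reading off \eqref{eq:edV} instead.

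For the pure term, once $U_{\zb}=V_z=0$ the function $R(z,\zb)$ must by itself satisfy $\mce(R)=f_u R + R_{z\zb}=0$; since $R$ does not depend on any $u_j$, the coefficient $f_u$ again functions as an algebraically independent quantity, and matching the $f_u$-part against the $(z,\zb)$-part forces $f_u R=0$ and $R_{z\zb}=0$ separately, whence $R=0$ (as $f_u\not\equiv 0$ under our standing assumptions). The main obstacle I anticipate is the bookkeeping in the middle step: the expression $\mce(U)$ mixes $U_{\zb}$ into several terms, and I must argue carefully that the $T^l$-weighted sum cannot conspire to cancel the $U_{z\zb}$ and $u_{j+1}U_{u_j,\zb}$ contributions. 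The clean way to handle this is to treat $f$ and its derivatives $f_u, f_{uu},\dots$ as formally independent indeterminates — which is exactly what "$f$ does not satisfy a first-order ODE" licenses — and then read \eqref{eq:edU} as a polynomial identity in these indeterminates and in the jet coordinates, extracting the vanishing of $U_{\zb}$ from the coefficient of the appropriate independent monomial. Weighted homogeneity keeps all these sums finite, so no convergence issue arises.
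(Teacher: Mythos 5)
Your proposal has the right skeleton (decompose $P=U+V+R$, separate terms in $\mce(P)=0$ by their type of dependence, use linear independence coming from the hypothesis on $f$), but it misidentifies the mechanism behind the crucial step. Once you separate by $u$-dependence, the terms involving $U_{\zb}$, namely $U_{z\zb}+u_{j+1}U_{u_j,\zb}$, carry no factor of $f$, $f_u$ or $T^l$ at all; the $u$-free part of the equation reads
\[
R_{z\zb}+U_{z\zb}+u_{j+1}U_{u_j,\zb}+V_{z\zb}+\ub_{j+1}V_{\ub_j,z}=0,
\]
and no appeal to independence of $f,f_u,f_{uu},\dots$ can extract $U_{\zb}=0$ from it, because $f$ does not appear in it. What is needed---and what the paper supplies---is a combinatorial argument in the jet variables themselves: if $u_k$ is the variable of highest weighted degree appearing in $P$ multiplied with a $\zb$, then the summand $u_{k+1}U_{u_k,\zb}$ produces monomials containing $u_{k+1}$, which by maximality of $u_k$ occur in no other term of the displayed equation; hence $U_{u_k,\zb}=0$, and inductively $U_{u_j,\zb}=0$ for all $j$, so $U_{\zb}=0$ (and symmetrically $V_z=0$). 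Your stated ``main obstacle''---that the $T^l$-weighted sum might conspire to cancel $U_{z\zb}+u_{j+1}U_{u_j,\zb}$---is the part that the $u$-dependence separation already disposes of; the cancellation you must actually rule out is among the $u$-free terms themselves, and your proposal contains no argument for that. Moreover, the license you invoke, treating all of $f,f_u,f_{uu},\dots$ as formally independent indeterminates, is not granted by the hypothesis: it only gives linear independence of $f$ and $f_u$, and in the paper's main examples $f_{uu}=\beta f$, so the higher derivatives are emphatically not independent.

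The $R=0$ step has a related gap. After $U_{\zb}=V_z=0$ (hence $R_{z\zb}=0$), it is not true that $R$ ``must by itself satisfy $\mce(R)=0$'': the expressions $\mce(U)$ and $\mce(V)$ still contain jet-free summands, namely $-fU_{z,u_0}$ and $-fV_{\zb,\ub_0}$ (coming from $T^0=\ol{T}^0=f$), which could a priori cancel against $f_uR$. Ruling out exactly this cross-cancellation is where the linear independence of $f$ and $f_u$ enters: the jet-free part of the full equation has the form $f_uR-f\cdot(\dots)=0$, which forces $R=0$ (the paper also uses weighted homogeneity to write $R=c\zb^d$, but the $f$ versus $f_u$ independence is the essential point). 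Your version, which matches $f_uR$ only against $R_{z\zb}$ inside $\mce(R)$, i.e.\ uses independence of $f_u$ from constants, never confronts the terms from $\mce(U+V)$ that are the real danger.
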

\begin{proof}
The terms in $\mce(P)=0$ that don't involve $u$ imply that
\be
R_{z \zb}+{U}_{z \zb}+ u_{j+1}{U}_{u_j,\zb}+{V}_{z \zb}+ \ub_{j+1}{V}_{\ub_j,z}=0.
\ee
Let $u_k$ be the variable of highest weighted degree in $P$ that appears multiplied with a $\zb$ and $\ub_m$ the variable of lowest weighted degree appearing in $P$ with a $z$.  Then $u_{k+1}{U}_{u_k \zb}$ produces a monomial which, because of the maximality of $u_k$, can't be canceled by any of the other terms.  Then by induction ${U}_{u_j \zb}=0$ for all $u_j$.  A similar argument shows that ${V}_{\ub_j z}=0$ and because ${U}$ and ${V}$ don't have any terms without some $u_j$ or $\ub_j$, we have that ${U}_{\zb} ={V}_z =0$.  This then implies that $R_{z \zb}=0$.

Thus, the only remaining possibility for $R$, if it does not vanish, is that is consists of exactly one monomial of the appropriate degree. In the case $d>0$ it follows that $R=c\zb^d$ for some constant $c$. We have $\mce(R)=cf_u \zb^d$, so the negative of this term has to appear in $\mce(U+V)$. But from \eqref{eq:edU} and \eqref{eq:edV} we see that the only possibilities to get a summand in $\mce(U+V)$ without any $u_j$ are the terms $-T^0 \dd{}{u_0} U_z=-fU_{z,u_0}$ and $-fV_{\zb,\ub_0}$. Since $f$ and $f_u$ are not linearly dependent, it follows $R=0$. The same argument works for $d<0$ and $d=0$.
\end{proof}
We have $P=U(z,u_j)+V(\zb,\ub_j)$, where $U$ and $V$, expressed as power series in $z$ and $\zb$, can be written as $U=\sum U^n z^n$ with $\wt(U^n)=d+n$ and $V=\sum V^n \zb^n$ with $\wt(V^n)=d-n$. Each coefficient $U^n$ or $V^n$ is a polynomial in the $u_j$ or the $\ub_j$, never constant.  Now we can expand $\mce(P)=0$ in terms of $z$ and $\zb$:
\begin{align}
\mce(U)&=z^n\left[  f_u U^n-T^l \dd{}{u_l}\left((n+1)U^{n+1}+u_{j+1}U^n_{u_j} \right)\right] \label{eq:Un} \\
\mce(V)&=\zb^n \left[ f_u V^n -\ol{T}^l \dd{}{\ub_l}\left((n+1)V^{n+1}+\ub_{j+1}V^n_{\ub_j} \right)  \right] \label{eq:Vn}
\end{align}
The $n=0$ coefficients only sum to be zero, but otherwise the coefficients of $z^n$ and $\zb^n$ must vanish separately. 

\begin{lem}\label{lem:LinearInU} If $u_k$ is the variable of highest weighted degree appearing in $U$, then $U_{u_k,u_j}=0$ for all $u_j$.  Similarly, if $\ub_k$ is the variable of  lowest weighted degree appearing in $V$, then $V_{\ub_k,\ub_j}=0$ for all $\ub_j$.
\end{lem}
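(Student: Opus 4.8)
The plan is to first reduce the statement to a clean assertion about the kernel of $e_{\ol{-1}}$, and then prove that assertion by induction on the number of jet variables. By complex conjugation (which exchanges $U\leftrightarrow V$, $u_j\leftrightarrow\ub_j$ and $e_{-1}\leftrightarrow e_{\ol{-1}}$) it suffices to treat $U$. First I would isolate the highest jet variable. In \eqref{eq:Un} the variable $u_{k+1}$ --- one level above the top variable $u_k$ of $U$ --- enters only through the single term $u_{k+1}U^n_{u_k}$ inside $\sum_j u_{j+1}U^n_{u_j}$, and only linearly: applying $\sum_l T^l\partial_{u_l}$, the $l=k+1$ summand deletes $u_{k+1}$, while the summands with $l\le k$ keep it linear since $T^l$ contains no $u_{k+1}$ for $l\le k$, and the term $(n{+}1)U^{n+1}$ contributes nothing in $u_{k+1}$. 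Hence the vanishing of the $u_{k+1}$--coefficient of \eqref{eq:Un} gives, for every $n$,
\[
\sum_{l=0}^{k}T^l\,U^n_{u_k u_l}=0,\qquad\text{i.e.}\qquad e_{\ol{-1}}\!\left(U^n_{u_k}\right)=0 .
\]

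By \eqref{eq:NoMixing} each $U^n_{u_k}$ is a polynomial in $u_0,\dots,u_k$ with no dependence on $u$ or the barred variables, so (summing $U_{u_k u_j}=\sum_n z^n U^n_{u_k u_j}$) the claim $U_{u_k u_j}=0$ is equivalent to: each $g:=U^n_{u_k}$ is constant in the $u_j$. Thus everything reduces to the kernel statement: \emph{if $g\in\C[u_0,\dots,u_m]$ is independent of $u$ and $e_{\ol{-1}}g=0$, then $g$ is constant.} I would prove this by induction on $m$. For $m=0$ one has $e_{\ol{-1}}g=-f\,g_{u_0}$, and $f\neq0$ forces $g_{u_0}=0$.

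For the inductive step write $g=\sum_{s=0}^{E}c_s u_m^{\,s}$ with $c_s\in\C[u_0,\dots,u_{m-1}]$ and $c_E\neq0$. Since $T^0,\dots,T^m$ contain no $u_m$, comparing coefficients of $u_m^{\,s}$ in $e_{\ol{-1}}g=0$ yields the recursion $e_{\ol{-1}}c_s=(s+1)\,c_{s+1}\,T^m$ (with $c_{E+1}=0$). The top relation $e_{\ol{-1}}c_E=0$ gives $c_E$ constant by the inductive hypothesis. If $E\ge1$, the next relation is $e_{\ol{-1}}c_{E-1}=E\,c_E\,T^m$ with $c_E$ a nonzero constant, which would place $T^m$ in the image of $e_{\ol{-1}}$ on $u$--free polynomials in $u_0,\dots,u_{m-1}$. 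Ruling this out is the heart of the matter, and it is exactly here that the hypothesis that $f$ satisfies no first--order ODE is used.

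The key structural observation I would exploit is that, writing $T^m=(e_{-1})^m f$, the coefficient of $f^{(r)}$ in $T^m$ is \emph{homogeneous of variable--degree $r$} in the $u_j$ (each application of $u_0\,\partial_u$ produces one jet factor); in particular the only bare single--variable monomial in $T^m$ is the term $u_{m-1}f_u$ coming from $r=1$. Since $f$ has no first--order ODE, $f$ and $f_u$ are $\C$--linearly independent, so the coefficient of $f_u$ is well defined after reducing all $f^{(r)}$ into a basis of $\operatorname{span}_{\C}\{f^{(r)}\}$; reductions of the $f^{(r)}$ with $r\ge2$ contribute only degree $\ge2$ polynomials and so cannot create a bare $u_{m-1}$. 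Comparing, in $e_{\ol{-1}}c_{E-1}=E c_E T^m$, the coefficient of the bare monomial $u_{m-1}$ within the $f_u$--component: on the right it is $E c_E$, while on the left every monomial of $-\sum_{i\le m-1}T^i(c_{E-1})_{u_i}$ carries a factor $u_{i-1}$ with $i-1\le m-2$, so its bare--$u_{m-1}$ coefficient vanishes. Hence $E c_E=0$, contradicting $E\ge1$, $c_E\neq0$. Therefore $E=0$, $g=c_0\in\C[u_0,\dots,u_{m-1}]$ with $e_{\ol{-1}}c_0=0$, and the inductive hypothesis finishes the proof; the statement for $V$ follows by conjugation. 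I expect the bookkeeping in this last step --- cleanly extracting the $f_u$--component when $f$ happens to satisfy a higher linear ODE (e.g.\ $f_{uu}=\beta f$), where the $f^{(r)}$ are not all independent --- to be the only genuinely delicate point.
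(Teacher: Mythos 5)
Your proposal is correct, but it follows a genuinely different route from the paper's. The paper proves Lemma \ref{lem:LinearInU} by a direct lexicographic argument: assuming some $U^n$ contains a monomial in which $u_k$ appears to a power $\geq 2$ or together with another $u_j$, it takes the offending monomial of highest lexicographic order and exhibits, among the terms $-T^l\partial_{u_l}\left(u_{k+1}U^n_{u_k}\right)$ in \eqref{eq:Un}, one surviving summand (carrying a factor $f_u$, or $f$ when the lowest index involved is $0$) that no other term can cancel. You instead isolate the full $u_{k+1}$--linear part of \eqref{eq:Un}, which cleanly yields $e_{\ol{-1}}\left(U^n_{u_k}\right)=0$, and then invoke the statement that $e_{\ol{-1}}$ has only constants in its kernel on $u$--free polynomials in the $u_j$. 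That kernel statement is precisely the paper's Lemma \ref{lem:eMinusOneKernel}, which the paper states and proves only \emph{after} Lemma \ref{lem:LinearInU} (by a highest--multi-index argument, again resting on the linear independence of $f$ and $f_u$) and uses repeatedly afterwards; since that proof nowhere relies on Lemma \ref{lem:LinearInU}, there is no circularity, and your route has the merit of deriving Lemma \ref{lem:LinearInU} from a lemma the paper needs anyway, merging two separate monomial-chasing arguments into one. Your proof of the kernel statement is itself different from the paper's: you induct on the number of jet variables, expand in powers of the top variable to get the recursion $e_{\ol{-1}}c_s=(s+1)c_{s+1}T^m$, and rule out $T^m\in\operatorname{image}(e_{\ol{-1}})$ using the grading $T^m=\sum_r Q_{m,r}f^{(r)}$ with $Q_{m,r}$ homogeneous of degree $r$ and $Q_{m,1}=u_{m-1}$; your treatment of the delicate case where the $f^{(r)}$ are linearly dependent (reducing to a basis of $\operatorname{span}_{\C}\{f^{(r)}\}$ and observing that the reductions contribute only degree $\geq 2$ terms, so the bare $u_{m-1}$ in the $f_u$--component survives) is sound, because $f$ and $f_u$ remain independent by the standing hypothesis. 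What the paper's direct argument buys is that it avoids exactly this basis-reduction bookkeeping; what yours buys is a cleaner logical structure and a single reusable kernel lemma in place of two parallel ad hoc monomial arguments.
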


\begin{proof}
Let $u_k$ be the variable of highest weighted degree appearing in $U$.  Suppose that there is a summand in $U^n$ where $u_k$ appears to a power higher than $1$ or multiplied by some other $u_j$. Denote the monomial in $U^n$ of highest lexicographic ordering with this property by $u_{j_1}^{i_1}\cdot\ldots\cdot u_{j_r}^{i_r}$, with $k=j_1> \ldots> j_r$ and all exponents $\geq 1$. Our assumption says that either $r\geq 2$ or $r=1$ and $i_1\geq 2$. 

Let us look at the case $r\geq 2$ first: By finding a non-vanishing summand in \eqref{eq:Un}, we will derive a contradiction. Exactly for $j=j_1(=k)$ there appear summands involving $u_{k+1}$ in \eqref{eq:Un}: exactly those of the form $-T^l \dd{}{u_l}\left(u_{k+1}U^n_{u_k} \right)$ with $l\neq k+1$. Our monomial above produces
\[
-n_1 T^l \dd{}{u_l} \left[u_{j_1+1}u_{j_1}^{i_1-1}\cdot u_{j_2}^{i_2}\cdot\ldots\cdot u_{j_r}^{i_r}\right].
\]
For $l={j_r}$ we obtain
\begin{align*}
&-i_1i_rT^{j_r}u_{j_1+1}u_{j_1}^{i_1-1}\cdot u_{j_2}^{i_2}\cdot\ldots\cdot u_{j_r}^{i_r-1}\\
&\quad =\begin{cases} -i_1i_rf_u\left[u_{j_1+1}u_{j_1}^{i_1-1}\cdot u_{j_2}^{i_2}\cdot\ldots\cdot u_{j_r}^{i_r-1}u_{j_r-1}\right]+\text{lower lex.~ord.} & j_r> 0\\
-i_1i_r f\left[ u_{j_1+1}u_{j_1}^{i_1-1}\cdot u_{j_2}^{i_2}\cdot\ldots\cdot u_{j_r}^{i_r-1} \right]+\text{lower lex.~ord.} & j_r=0
\end{cases}
\end{align*}
and because the original monomial was the one of highest lexicographic ordering among those in $U^n$, this monomial cannot be canceled by any other that is produced from $U^n$ in \eqref{eq:Un}. But it also cannot cancel with a summand coming from $U^{n+1}$ since that would contradict our assumption that $u_k$ is the variable of highest weighted degree appearing in all of $U$.   Thus $u_k$ cannot appear in a monomial with any other $u_j$.

Now suppose that $r=1$ and $m=i_1\geq 2$ so that the highest monomial is $u_k^m$. Taking $j=l=k$ gives a summand of \eqref{eq:Un} of the form
\begin{align*}
-mT^k \dd{}{u_k}& \left[ u_{k+1}u_k^{m-1}\right] = -m(m-1) T^k u_{k+1}u_k^{m-2} \\ 
&=\begin{cases} -m(m-1)f_u\left[u_{k+1}u_k^{m-2}u_{k-1}\right]+\text{lower lex.~ord.} & k>0 \\
-m(m-1)f\left[u_{k+1}u_k^{m-2}\right] +\text{lower lex.~ord.} & k=0
\end{cases}
\end{align*}
which again is the unique highest one.  The same considerations as before lead to a contradiction.  Thus $u_k$ must appear linearly.  When it does the terms only involving $U^n$ allow it to cancel.

An analogous argument gives the corresponding result for $V$.
\end{proof}
\begin{cor}\label{cor:Highest}
If $u_k$ is the highest variable appearing in $U$ then it only appears in $U^{k+1-d}$, so $d \leq k+1$.  If $\ub_{m}$ is the lowest variable appearing in $V$ then it only appears in $V^{m+1+d}$, so that $d \geq -(m+1)$. 
\end{cor}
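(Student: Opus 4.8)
The plan is to read off the conclusion directly from Lemma \ref{lem:LinearInU} together with the weighted homogeneity of the expansion coefficients $U^n$ and $V^n$, so very little genuine work remains. First I would recall that Lemma \ref{lem:LinearInU} tells us the highest variable $u_k$ occurs in $U$ only linearly and never in a product with another $u_j$; hence, inside each coefficient $U^n$ (a polynomial in the $u_j$), every monomial that involves $u_k$ is simply a constant multiple of $u_k$ itself.

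The key step is then a single weight count. Each $U^n$ is weighted homogeneous with $\wt(U^n)=d+n$, while $\wt(u_k)=k+1$. A nonzero multiple of the monomial $u_k$ can therefore appear in $U^n$ only when $k+1=d+n$, that is, when $n=k+1-d$. This already confines $u_k$ to the single coefficient $U^{k+1-d}$. Since $U$ is a power series in $z$ with nonnegative exponents, the index $n=k+1-d$ must satisfy $n\geq 0$, which is exactly the bound $d\leq k+1$.

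The statement for $V$ is obtained by the identical argument applied to the lowest variable $\ub_m$: using $\wt(\ub_m)=-(m+1)$ and $\wt(V^n)=d-n$, matching weights forces $\ub_m$ to occur only in $V^{m+1+d}$, and nonnegativity of the exponent $n=m+1+d$ yields $d\geq -(m+1)$. The only point that calls for a moment's care — and which I regard as the main (minor) obstacle — is justifying that the expansions in $z$ and $\zb$ carry only nonnegative powers, so that the indices $k+1-d$ and $m+1+d$ are genuinely nonnegative; this is what converts the exact location of the top and bottom variables into the asserted inequalities on $d$.
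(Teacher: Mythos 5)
Your proposal is correct and is essentially the paper's own argument: the paper's proof consists of the single line that the corollary ``follows by considerations of weighted degree and Lemma \ref{lem:LinearInU}'', which is precisely the weight count you spell out (monomials containing $u_k$ are forced by Lemma \ref{lem:LinearInU} to be constant multiples of $u_k$, so matching $\wt(u_k)=k+1$ against $\wt(U^n)=d+n$ pins down $n=k+1-d$, and similarly for $V$). The point you flag as needing care is immediate from the setup: $U$ and $V$ are expanded as power series in $z$ and $\zb$ with nonnegative exponents, so $k+1-d\geq 0$ and $m+1+d\geq 0$ follow at once.
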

\begin{proof}
This follows by considerations of weighted degree and Lemma \ref{lem:LinearInU}.
\end{proof}
\begin{cor}
If $u_k$ is the highest variable appearing in $U$ then $U^n=0$ for $n >{k+1-d}$.  If $\ub_{m}$ is the lowest variable appearing in $V$ then $V^n=0$ for $n>{m+1+d}$.
\end{cor}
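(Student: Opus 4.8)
The plan is to analyze \eqref{eq:Un} coefficient by coefficient in $z$ and, for each nonzero $U^n$, to track the largest index $k_n$ of a variable $u_j$ occurring in it. I will show that no nonzero coefficient $U^n$ can have $n>k+1-d$; the statement for $V$ then follows by the symmetric argument applied to \eqref{eq:Vn}, tracking instead the largest index (equivalently, the most negative weighted degree) among the $\ub_j$ occurring in $V^n$, which is bounded above by $m$. Throughout I use that each nonzero $U^n$ is a non-constant polynomial, so that $k_n$ is well defined, together with Corollary \ref{cor:Highest}, which says that the top variable $u_k$ occurs only in $U^{k+1-d}$; hence $k_n\leq k-1$ whenever $n\neq k+1-d$.

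The core is a dichotomy for a nonzero $U^n$ with $n>k+1-d$, writing $p:=k_n\leq k-1$. Either $u_p$ occurs in $U^n$ only linearly and never multiplied by another $u_j$, or it does not. In the first case $U^n$ contains a summand $c\,u_p$, and since $U^n$ is weighted homogeneous with $\wt(U^n)=d+n$ while $\wt(u_p)=p+1$, we would get $d+n=p+1$, i.e.\ $n=p+1-d\leq k-d<k+1-d$, contradicting $n>k+1-d$. Hence we are in the second case, and the non-cancellation argument from the proof of Lemma \ref{lem:LinearInU} applies verbatim to the single term $T^l\,\dd{}{u_l}\left(u_{j+1}U^n_{u_j}\right)$ of \eqref{eq:Un}: taking the lexicographically highest offending monomial of $U^n$ produces in this term a monomial, not cancelled by any of the other contributions of that same term, whose highest variable is $u_{p+1}$.

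At this point I read off a growth statement rather than an outright contradiction. Since $f_uU^n$ contains no $u_{p+1}$ and the only remaining term of \eqref{eq:Un} is $(n+1)T^l\,\dd{}{u_l}U^{n+1}$, this last term must cancel the $u_{p+1}$ monomial just produced. But $T^l\,\dd{}{u_l}U^{n+1}$ has no variable of index exceeding that of $U^{n+1}$, so $U^{n+1}\neq 0$ and $k_{n+1}\geq p+1=k_n+1$. Iterating, a single nonzero $U^n$ with $n>k+1-d$ would force a sequence of nonzero coefficients $U^{n},U^{n+1},\dots$ with strictly increasing top indices $k_{n}<k_{n+1}<\cdots$, which is impossible because every variable occurring anywhere in $U$ has index at most $k$. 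Therefore $U^n=0$ for all $n>k+1-d$.

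The step I expect to be the main obstacle is exactly this last one. The naive approach would take the largest nonzero coefficient $U^N$ and exploit that its equation \eqref{eq:Un} lacks a $U^{N+1}$ term, so that the offending $u_{p+1}$ monomial has nothing to cancel against; but this requires knowing in advance that $U$ is a polynomial in $z$ (a finite expansion), which is not yet available, since $U$ is only known to be polynomial in the $u_j$. The growth statement $k_{n+1}\geq k_n+1$ circumvents this by making the boundedness of the variable indices, rather than the existence of a top coefficient, the source of the contradiction. The only genuinely technical input is the adaptation of the lexicographic non-cancellation argument of Lemma \ref{lem:LinearInU} to \eqref{eq:Un} in the presence of the extra term $(n+1)T^l\,\dd{}{u_l}U^{n+1}$; rather than obstructing the argument, this term is precisely what gets pinned down, yielding the forced index growth.
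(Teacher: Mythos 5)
Your proof is correct, but it is organized differently from the paper's, and the difference lies precisely in how the cross-term $(n+1)T^l\,\dd{}{u_l}U^{n+1}$ in \eqref{eq:Un} is handled. The paper takes $u_l$ to be the highest variable appearing in the \emph{entire} tail $\{U^n : n>k+1-d\}$ — well defined since all indices occurring in $U$ are bounded by $k$, and in fact $l<k$ by Corollary \ref{cor:Highest}. With this global choice the cross-term can never produce the variable $u_{l+1}$ (both $T^{l'}$ and $\dd{}{u_{l'}}U^{n+1}$ involve only variables of index $\leq l$), so the non-cancellation argument of Lemma \ref{lem:LinearInU} applies verbatim and forces $u_l$ to appear linearly and without other $u_j$'s; the weight count $\wt(U^n)=l+1<k+1$ against $\wt(U^n)=n+d>k+1$ then gives an immediate contradiction, with no iteration. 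You instead work coefficient by coefficient, so the cross-term stays live, and you convert it into the growth statement $k_{n+1}\geq k_n+1$, deriving the contradiction from boundedness of the indices. Both arguments rest on the same two ingredients — the lexicographic non-cancellation of Lemma \ref{lem:LinearInU} and the weighted-degree count, anchored by the bound $k_n\leq k-1$ from Corollary \ref{cor:Highest} — and the paper's global-maximum choice simply collapses your induction into a single step. Your stated worry, that one cannot pick "the largest nonzero $U^N$" because $U$ is a priori only a power series in $z$, is legitimate, but note that the paper sidesteps it without your growth mechanism: its maximum is taken over variable indices (finite by hypothesis), not over powers of $z$.
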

\begin{proof}
Let $u_l$ with $l < k$ be the highest variable appearing in those $U^n$ with $n>k+1-d$. If we regard such an $n$, the same argument as given in the proof of Lemma \ref{lem:LinearInU} implies that $u_l$ appears linearly and without any other $u_j$. This implies that $\wt(U^n)=l+1<k+1$, but this contradicts $\wt(U^n)=n+d>k+1$. A similar argument can be made for $V$.
\end{proof}
\begin{cor}
Both $U$ and $V$ are polynomials. In fact, 
\[
U=\sum_{n=0}^{k+1-d} U^n z^n \text{ with } U^{k+1-d}=b (u_k+\ldots) \text{ for some}\; b \in \C
\]
and
\[
V=\sum_{n=0}^{m+1+d} V^n \zb^n \text{ with } V^{k+1+d}=c( \ub_m+\ldots) \text{ for some } \;c\in \C.
\]
\end{cor}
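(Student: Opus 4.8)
The plan is to assemble the three preceding results---Lemma~\ref{lem:LinearInU}, Corollary~\ref{cor:Highest}, and the corollary immediately preceding this statement---into a single structural description of $U$ and $V$. First I would record that, since Lemma~\ref{lem:NoZMixing} has reduced $P$ to $U(z,u_j)+V(\zb,\ub_j)$, the summand $U$ is a genuine component of the smooth function $P$ on $\M{\infty}$ depending only on $z$ and the $u_j$. Collecting powers of $z$ gives the weighted-homogeneous expansion $U=\sum_n U^n z^n$ with each $U^n$ a polynomial in the $u_j$; smoothness along $\{z=0\}$ forbids negative exponents, since a term $U^n z^n$ with $n<0$ would be singular there. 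Combined with the preceding corollary, which gives $U^n=0$ for $n>k+1-d$, this forces
\[
U=\sum_{n=0}^{k+1-d}U^n z^n,
\]
so $U$ is a polynomial in $z$ of degree at most $k+1-d$. The identical reasoning, using $V^n=0$ for $n>m+1+d$ together with non-negativity of the exponents of $\zb$, shows $V=\sum_{n=0}^{m+1+d}V^n\zb^n$ is a polynomial in $\zb$.

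For the leading coefficient I would argue as follows. By Corollary~\ref{cor:Highest} the top variable $u_k$ appears only in the single coefficient $U^{k+1-d}$, and since $u_k$ appears somewhere in $U$ by hypothesis, it must appear there. Lemma~\ref{lem:LinearInU} gives $U_{u_k,u_j}=0$ for every $j$, so within $U^{k+1-d}$ the variable $u_k$ enters linearly and is never multiplied by another $u_j$; thus $U^{k+1-d}$ equals a coefficient times $u_k$ plus monomials in $u_0,\dots,u_{k-1}$. I would then invoke weighted degree: since $\wt(U^{k+1-d})=d+(k+1-d)=k+1=\wt(u_k)$, the coefficient of $u_k$ has weighted degree $0$ and is therefore a constant $b\in\C$, nonzero because $u_k$ genuinely occurs. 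This gives $U^{k+1-d}=b(u_k+\dots)$, the dots denoting the remaining weight--$(k+1)$ monomials in $u_0,\dots,u_{k-1}$.

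The argument for $V$ is the mirror image, exchanging highest with lowest, $u_k$ with $\ub_m$, and $z$ with $\zb$: Corollary~\ref{cor:Highest} confines $\ub_m$ to $V^{m+1+d}$, Lemma~\ref{lem:LinearInU} makes it linear and unmultiplied, and the weight count $\wt(V^{m+1+d})=d-(m+1+d)=-(m+1)=\wt(\ub_m)$ identifies its coefficient with a nonzero constant $c\in\C$, yielding $V^{m+1+d}=c(\ub_m+\dots)$. (I note that the index displayed as $V^{k+1+d}$ in the statement should read $V^{m+1+d}$.) Since every ingredient is already established, I expect essentially no real obstacle; the only points demanding care are the non-negativity of the exponents of $z$ and $\zb$---which is precisely what upgrades the finite-above truncation of the preceding corollary into genuine polynomiality---and the weighted-degree bookkeeping that pins the leading coefficients down to the scalars $b$ and $c$.
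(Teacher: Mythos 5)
Your proposal is correct and follows the paper's intended route: the paper states this corollary without a separate proof precisely because it is the direct assembly of Lemma \ref{lem:NoZMixing}, the power-series expansion with weighted-homogeneous polynomial coefficients $U^n$, $V^n$, Lemma \ref{lem:LinearInU}, Corollary \ref{cor:Highest}, and the truncation corollary immediately preceding it---which is exactly the assembly you carry out, with the weight count $\wt(U^{k+1-d})=k+1=\wt(u_k)$ pinning the coefficients $b$ and $c$ down as nonzero constants. You are also right that $V^{k+1+d}$ in the printed statement is a typo for $V^{m+1+d}$.
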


We will make use of the following lemma repeatedly.
\begin{lem}\label{lem:eMinusOneKernel}
The operator $\ol{e_{-1}}$, acting on polynomials in $u_i$, has only the constants as kernel. 
\end{lem}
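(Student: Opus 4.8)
The plan is to induct on the \emph{order} of $p$, meaning the largest index $k$ for which $u_k$ occurs in $p$. On a polynomial in the $u_i$ the operator collapses to $\ol{e_{-1}}p=-\sum_{i\geq 0}T^i\,\partial_{u_i}p$, because the other terms of $\ol{e_{-1}}$ differentiate in $\zb$, $u$, or the $\ub_i$. (If one momentarily allows $p$ to involve $u$, the coefficient of $\ub_0$ in $\ol{e_{-1}}p$ is $p_u$, so $\ol{e_{-1}}p=0$ already forces $p_u=0$; thus the kernel is genuinely a space of polynomials in the $u_i$ alone.) Our standing assumption that $f$ satisfies no first--order ODE guarantees in particular that $f\neq 0$ (otherwise $f_u=0$) and, crucially, that $f_u$ is not a constant multiple of $f$. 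The base case $k=0$ is then immediate: $\ol{e_{-1}}p=-f\,p'(u_0)=0$ forces $p'(u_0)=0$, so $p$ is constant.

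For the inductive step I would assume the statement for order $\leq k-1$ and take $p$ of order exactly $k$ with $\ol{e_{-1}}p=0$, expanding $p=\sum_{m=0}^M u_k^m\,p_m$ with $p_m\in\C[u_0,\dots,u_{k-1}]$, $p_M\neq 0$ and $M\geq 1$. By Lemma \ref{lem:tidentities} and the recursion $T^{i+1}=\sum_j\binom ij u_{i-j}T^j_u$, each $T^i$ with $i\leq k$ is a polynomial in $u_0,\dots,u_{i-1}$ only, hence involves no $u_k$; collecting the coefficient of $u_k^m$ in $\ol{e_{-1}}p=-\sum_{i=0}^k T^i\partial_{u_i}p$ therefore decouples the equation into
\[
\ol{e_{-1}}p_m=(m+1)\,T^k\,p_{m+1},\qquad 0\leq m\leq M,\quad p_{M+1}:=0 .
\]
The top equation ($m=M$) is $\ol{e_{-1}}p_M=0$ with $p_M$ of order $\leq k-1$, so the inductive hypothesis forces $p_M=c$ for a nonzero constant $c$, and the next equation ($m=M-1$) becomes $\ol{e_{-1}}p_{M-1}=Mc\,T^k$.

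The heart of the argument, and the step I expect to be the main obstacle, is to show that $Mc\,T^k$ cannot lie in the image of $\ol{e_{-1}}$ on $\C[u_0,\dots,u_{k-1}]$. The same recursion shows that for $1\leq i\leq k-1$ every $T^i$ lies in the ideal generated by $u_0,\dots,u_{k-2}$, whereas $T^k=f_u\,u_{k-1}+(\text{terms in }u_0,\dots,u_{k-2})$; the weighted--degree grading confirms the leading piece, since $\wt(T^k)=k=\wt(u_{k-1})$ forces the only surviving monomial after setting $u_0=\dots=u_{k-2}=0$ to be $u_{k-1}$ with coefficient $f_u$. Evaluating $\ol{e_{-1}}p_{M-1}=Mc\,T^k$ at $u_0=\dots=u_{k-2}=0$ then annihilates every $T^i$ with $1\leq i\leq k-1$ and leaves only the $T^0=f$ term on the left:
\[
-f\,h(u_{k-1})=Mc\,f_u\,u_{k-1},\qquad h(u_{k-1}):=\big(\partial_{u_0}p_{M-1}\big)\big|_{u_0=\dots=u_{k-2}=0}\in\C[u_{k-1}].
\]
Comparing the coefficients of $u_{k-1}^1$ yields $Mc\,f_u=-c_1\,f$, where $c_1\in\C$ is the coefficient of $u_{k-1}$ in $h$; since $Mc\neq 0$ this is a nontrivial first--order linear ODE for $f$, contradicting the standing assumption. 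Hence $M=0$, so $p$ in fact has order $\leq k-1$ and is constant by the inductive hypothesis. The only delicate bookkeeping is the clean decoupling of the $u_k$--expansion and the reduction of $T^k$ to exactly $f_u\,u_{k-1}$ modulo $(u_0,\dots,u_{k-2})$, both of which are controlled by weighted degree.
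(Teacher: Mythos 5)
Your proof is correct, but it is organized quite differently from the paper's. The paper does not induct on the top index: it reduces to a weighted--homogeneous polynomial $h=\sum_I h_I u^I$, selects the monomial of highest lexicographic order with $h_{I_0}\neq 0$, and tracks the distinguished monomial produced when $T^l\partial_{u_l}$ (with $l$ the lowest index occurring in $I_0$) replaces one factor $u_l$ by $u_{l-1}$; a careful case analysis of which other pairs $(I,j)$ can contribute to that same monomial shows its total coefficient is $h_{I_0}i_l f_u + h_{I_1}cf$, and linear independence of $f$ and $f_u$ forces $h_{I_0}=0$. You instead induct on the order $k$, use the clean triangular decoupling $\ol{e_{-1}}p_m=(m+1)T^kp_{m+1}$ coming from the fact that $T^0,\dots,T^k$ involve no $u_k$, and reduce everything to the statement that $T^k$ is not in the image of $\ol{e_{-1}}$ on $\C[u_0,\dots,u_{k-1}]$, which you settle by evaluating at $u_0=\dots=u_{k-2}=0$ so that all of $T^1,\dots,T^{k-1}$ die and only the $T^0=f$ term survives on the left against $Mc\,f_u u_{k-1}$ on the right. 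Both arguments ultimately rest on the same two ingredients --- the leading-term structure $T^j=u_{j-1}f_u+\text{(lower)}$, $T^0=f$, and the linear independence of $f$ and $f_u$ --- but your evaluation trick replaces the paper's delicate ``which summands can contribute to this monomial'' bookkeeping (including its separate $l>0$ and $l=0$ cases) by a one-variable coefficient comparison, at the modest cost of running an induction and treating the case $k=1$ as a degenerate (empty) evaluation. Your route also isolates a statement of independent interest, namely that $T^k$ is never $\ol{e_{-1}}$ of a lower-order polynomial, whereas the paper's single global argument handles all monomials at once and stays closer to the weighted-homogeneity framework used throughout that section.
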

\begin{proof}
It suffices to prove that for a weighted-homogeneous polynomial $h$ of degree at least one, $\ol{e_{-1}}h=0$ implies $h=0$. Write
\[
h=\sum_{|I|=k+1} h_I u^I,
\]
where the sum runs over all multi-indices $I=(i_0,\ldots, i_k)$ of weighted degree $\sum_j (i_j+1)$ equal to $k+1$, and $u^I=u_0^{i_0}\cdot\ldots\cdot u_k^{i_k}$, and assume 
\begin{equation}\label{eq:proofe-1injective}
0=-\ol{e_{-1}}(h)=\sum_{j=0}^k \sum_{|I|=k+1} h_I T^j \frac{\partial}{\partial u_j} u^I.
\end{equation}
Let $I_0=(i_0,\ldots,i_k)\neq 0$ be the highest index such that $h_{I_0}\neq 0$.  Let furthermore $l$ be the smallest number such that $i_l\neq 0$, and assume first that $l>0$. In other words, $u^I=u_l^{i_l}u_{l+1}^{i_{l+1}}\cdot\ldots\cdot u_k^{i_k}$. In Equation \eqref{eq:proofe-1injective} we find $h_{I_0}$, for example, in the summand
\[
h_{I_0} T^l i_l \cdot u_l^{i_l-1}u_{l+1}^{i_{l+1}}\cdot\ldots\cdot u_k^{i_k}.
\]
(This is the summand for $I=I_0$ and $j=l$.) Since $T^l$ reads $T^l=u_{l-1}f_u+\text{terms with lower }u\text{'s}$, we have found a summand 
\[
h_{I_0}i_lf_u\cdot  u_{l-1}u_l^{i_l-1}u_{l+1}^{i_{l+1}}\cdot\ldots\cdot u_k^{i_k}.
\]
Let us try to spot the full coefficient of this monomial $u_{l-1}u_l^{i_l-1}u_{l+1}^{i_{l+1}}\cdot\ldots\cdot u_k^{i_k}$ in \eqref{eq:proofe-1injective}. For which $I$ and $j$ can the summand $h_I T^j \frac{\partial}{\partial u_j} u^I$ contribute? If $j>0$, some of the $u$'s in the monomial have to appear in $T^j$. But then, necessarily $I\ge I_0$, since $u^I$ is differentiated with respect to $u_j$, and $u_j$ is higher than all the $u$'s appearing in $T^j$. For $I=I_0$, we already have found the one contribution, so since we assumed that $I_0$ is the highest multi-index such that $h_{I_0}\neq 0$, the only further summands that can contribute are those with $j=0$. Here we only have a new contribution if $u^I=u_0u_{l-1}u_l^{i_l-1}u_{l+1}^{i_{l+1}}\cdot\ldots\cdot u_k^{i_k}$. Denoting the corresponding multi-index by $I_1$, we have shown:
\[
0=h_{I_0}i_jf_u+h_{I_1}cf,
\]
where $c=1$ or $c=2$, depending on whether $l>1$ or $l=1$. Since $f$ and $f_u$ are linearly independent, $h_{I_0}=0$ (and also $h_{I_1}=0$), a contradiction.

In the case $l=0$, the monomial in question is $u_l^{i_l-1}u_{l+1}^{i_{l+1}}\cdot\ldots\cdot u_k^{i_k}$, and there is only the summand for $j=0$ and $I=I_0$ contributing to this monomial; we also conclude $h_{I_0}=0$.
\end{proof}

\begin{lem}
$\wt(U) \geq 0$ and $\wt(V) \leq 0$. 
\end{lem}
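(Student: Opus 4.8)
The plan is to prove both inequalities in contrapositive form: if $U\neq 0$ then $d\geq 0$, and symmetrically if $V\neq 0$ then $d\leq 0$; I will focus on the first claim, the second being the complex-conjugate situation. Recall that $U$ is weighted homogeneous of degree $d$, so each coefficient $U^n$ is weighted homogeneous with $\wt(U^n)=d+n$; since every nonzero $U^n$ is a \emph{nonconstant} polynomial in the variables $u_j$, all of which have strictly positive weighted degree $\wt(u_j)=j+1$, we must have $d+n>0$ whenever $U^n\neq 0$. I would begin by assuming, for contradiction, that $U\neq 0$ while $d<0$. Letting $n_0$ be the smallest index with $U^{n_0}\neq 0$, the inequality $d+n_0>0$ together with $d\leq -1$ forces $n_0\geq 1-d\geq 2$.

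The heart of the argument is then to extract a contradiction from the vanishing of the coefficient of $z^{n_0-1}$ in \eqref{eq:Un}. Because $n_0-1\geq 1$, this coefficient must vanish on its own: it is not entangled with the $V$-part, which only couples to the $z^0$ term. Substituting $U^{n_0-1}=0$, the bracketed expression in \eqref{eq:Un} collapses to $-n_0\,T^l\partial_{u_l}U^{n_0}$. The key observation is that on functions of the $u_j$ alone one has $\sum_l T^l\partial_{u_l}=-\ol{e_{-1}}$, since the remaining terms of $\ol{e_{-1}}$ (those involving $\partial_{\zb}$, $\partial_u$, and the $\partial_{\ub_i}$) annihilate such functions. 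Hence the vanishing coefficient reads $n_0\,\ol{e_{-1}}U^{n_0}=0$, so $\ol{e_{-1}}U^{n_0}=0$. Lemma \ref{lem:eMinusOneKernel} now applies and shows $U^{n_0}$ is constant, contradicting its nonconstancy. Therefore $d\geq 0$, i.e.\ $\wt(U)\geq 0$.

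For $V$ I would run the mirror-image computation: with $V=\sum V^n\zb^n$, $\wt(V^n)=d-n$, and the $\ub_j$ of negative weighted degree, a nonzero $V^{n_0}$ forces $d-n_0<0$, so if $d>0$ then again $n_0\geq 2$; examining the $\zb^{n_0-1}$ coefficient of \eqref{eq:Vn} and using $\sum_l \ol{T}^l\partial_{\ub_l}=-e_{-1}$ on functions of the $\ub_j$ yields $e_{-1}V^{n_0}=0$, whence the complex conjugate of Lemma \ref{lem:eMinusOneKernel} forces $V^{n_0}$ constant, a contradiction, giving $\wt(V)\leq 0$. The main obstacle, and the reason the whole scheme works, is securing the separation of the relevant Fourier coefficient from the coupled $n=0$ term; this is precisely what the degree bound $n_0\geq 2$ (hence $n_0-1\geq 1$) provides, and it is what makes the injectivity of $\ol{e_{-1}}$ from Lemma \ref{lem:eMinusOneKernel} applicable.
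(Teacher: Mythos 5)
Your proof is correct and takes essentially the same approach as the paper: isolate the lowest nonvanishing coefficient $U^{n_0}$, use the fact that for $n\geq 1$ the $z^n$--coefficients of $\mce(U+V)=0$ must vanish separately, and apply Lemma \ref{lem:eMinusOneKernel} via the identity $\sum_l T^l\partial_{u_l}=-e_{\ol{-1}}$ on functions of the $u_j$ alone (and its conjugate for $V$). The only difference is organizational: the paper treats $n_0=1$ (its case $m=0$) separately and phrases the conclusion as an induction giving $U=0$, whereas you rule out $n_0=1$ up front by the same weight count and finish with a direct contradiction to the nonconstancy of $U^{n_0}$.
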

\begin{proof}
Suppose that $\wt(U)=d < 0$.  We know that $U=\sum_{n=0}^{k+1-d}U^n z^n$. Since $\wt(U^n) \geq 0$, we have $U^0=0$; let $m \geq 0$ be such that $U^0=\ldots=U^m=0$ and $U^{m+1} \neq 0$.  If $m=0$, i.e.~$U^1\neq 0$, it would follow that $d=-1$ and that $U^1$ is constant, which was ruled out in Lemma \ref{lem:NoZMixing}.

So we are in the case $m>0$.  From $\mce(U+V)=0$ and \eqref{eq:Un} we find that 
\[\sum_{l}-z^m T^l \dd{}{u_l}\left((m+1)U^{m+1} \right)=0 \;\; {\rm (no \;sum\; on} \;m {\rm )}\]
 when $m >0$.  By Lemma \ref{lem:eMinusOneKernel} this implies that $U^{m+1}=0$ and so by induction $U=0$.
A similar argument gives the result for $V$.
\end{proof}
\begin{cor}
If $\wt(P)>0$, then $V=0$. If $\wt(P)<0$, then $U=0$.
\end{cor}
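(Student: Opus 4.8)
The plan is to deduce this corollary directly from the preceding lemma, which asserts $\wt(U)\geq 0$ and $\wt(V)\leq 0$. The only point requiring comment is that $U$ and $V$ individually inherit the weighted homogeneity of $P$, after which the corollary is purely a matter of translating sign constraints into vanishing statements.

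First I would record why the splitting of $P$ respects the weighted-degree grading. By the \eqref{eq:NoMixing} condition $P_{u_i,\ub_j}=0$ together with Lemma \ref{lem:NoZMixing} (which gives $R=0$), no monomial of $P$ contains both a holomorphic jet variable $u_j$ and an antiholomorphic one $\ub_j$. Thus the decomposition $P=U(z,u_j)+V(\zb,\ub_j)$ is a splitting by monomial type: $U$ collects exactly the monomials built from $z$ and the $u_j$, while $V$ collects those built from $\zb$ and the $\ub_j$. The $\s{1}$--action $F$ of \eqref{eq:Symmetry} rescales each of these two families of monomials within itself, so the subspaces they span are each $F$--invariant. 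Projecting the homogeneity relation $F^*P=\lambda^d P$ onto these two invariant subspaces then shows that $U$ and $V$ are themselves weighted homogeneous of degree $d=\wt(P)$ whenever they are nonzero; that is, $\wt(U)=d$ if $U\neq 0$ and $\wt(V)=d$ if $V\neq 0$.

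With this in hand the corollary is immediate. If $\wt(P)=d>0$ and $V\neq 0$, then $\wt(V)=d>0$, contradicting $\wt(V)\leq 0$ from the lemma; hence $V=0$. Symmetrically, if $d<0$ and $U\neq 0$ then $\wt(U)=d<0$ contradicts $\wt(U)\geq 0$, forcing $U=0$. I expect no genuine obstacle here: all the substantive work — the injectivity argument via Lemma \ref{lem:eMinusOneKernel} and the induction ruling out negative-weight $U$ — has already been carried out in the lemma, and the corollary merely records that $\wt(U)=\wt(V)=\wt(P)$ on the nonzero pieces and reads off the consequences.
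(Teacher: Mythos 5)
Your proposal is correct and matches the paper's (implicit) reasoning: the corollary is stated without proof precisely because it follows at once from the lemma $\wt(U)\geq 0$, $\wt(V)\leq 0$ once one observes, as you do, that $U$ and $V$ are each weighted homogeneous of degree $\wt(P)$ when nonzero. Your care in justifying that the splitting $P=U+V$ respects the $\s{1}$--grading is a sound filling-in of a step the paper leaves tacit, and nothing more is needed.
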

\begin{lem}\label{lem:V0}
If $\wt(P)=0$ then $P=b\cdot (z u_0 -\zb \ub_0)$ for some constant $b \in \C$.
\end{lem}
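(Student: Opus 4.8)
The plan is to feed the already-established reductions for $P \in V_0$ into the two recursions \eqref{eq:Un} and \eqref{eq:Vn}. By the preceding lemmas we may write $P = U(z,u_j) + V(\zb,\ub_j)$ with $\wt(U)=\wt(V)=0$, and expand $U=\sum_n U^n z^n$ and $V=\sum_n V^n \zb^n$, where $U^n$ is a polynomial in the $u_j$ of weighted degree $n$ in which every monomial contains at least one $u_j$, and similarly $V^n$ is a polynomial in the $\ub_j$ of weighted degree $-n$. First I would pin down the low-order coefficients by weight alone. Since $\wt(u_j)=j+1\geq 1$, a nonzero $U^n$ needs $n\geq 1$, so $U^0=0$, and the only weighted-degree-$1$ option is $U^1 = a\,u_0$ for a constant $a \in \C$ (genuinely constant because $P_u=0$). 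Symmetrically $V^0=0$ and $V^1 = a'\,\ub_0$.

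Next I would run the recursion. For $n\geq 1$ the coefficient of $z^n$ in $\mce(U)$ carries no $\zb$, so it cannot cancel against $\mce(V)$ and must vanish on its own; rearranging \eqref{eq:Un} gives $-(n+1)\,\ol{e_{-1}}(U^{n+1}) = f_u U^n - T^l\ddv{u_l}\!\left(u_{j+1}U^n_{u_j}\right)$, using $\ol{e_{-1}}(g)=-T^l g_{u_l}$ on polynomials $g$ in the $u_j$. The base case $n=1$ is the heart of the matter: substituting $U^1=a\,u_0$ and using $T^1 = u_0 f_u$, the right-hand side collapses to $a f_u u_0 - a\,T^1 = 0$, whence $\ol{e_{-1}}(U^2)=0$, and Lemma \ref{lem:eMinusOneKernel} forces $U^2$ to be constant, hence $U^2=0$ by weight. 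For $n\geq 2$ a routine induction closes the loop: once $U^n=0$ the right-hand side vanishes, so $\ol{e_{-1}}(U^{n+1})=0$ and $U^{n+1}=0$. Thus $U = a\,u_0\,z$, and applying the conjugate of Lemma \ref{lem:eMinusOneKernel} to \eqref{eq:Vn} in exactly the same fashion gives $V = a'\,\ub_0\,\zb$.

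Finally I would invoke the single coupling between $U$ and $V$, the $n=0$ equation, in which only the sum of the $z^0$ and $\zb^0$ coefficients need vanish. With $U^1=a\,u_0$ and $V^1=a'\,\ub_0$ these coefficients are $-T^l U^1_{u_l} = -a\,T^0 = -af$ and $-\ol{T}^l V^1_{\ub_l} = -a'f$, so $(a+a')f=0$; since $f\neq 0$ this gives $a'=-a$. Collecting the pieces yields $P = a\,u_0 z - a\,\ub_0\zb = a\,(zu_0 - \zb\ub_0) = a\,q$, which is the claim with $b=a$.

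The only genuinely delicate step is the base case $n=1$: the vanishing of $\ol{e_{-1}}(U^2)$ relies on the exact cancellation $a f_u u_0 = a\,T^1$, i.e.~on the identity $T^1 = u_0 f_u$. Everything else is weight bookkeeping together with the injectivity of $\ol{e_{-1}}$ on non-constant polynomials supplied by Lemma \ref{lem:eMinusOneKernel}; no linear independence of $f$ and $f_u$ is needed here, only $f\neq 0$.
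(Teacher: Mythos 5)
Your proposal is correct and follows essentially the same route as the paper's proof: decompose $P=U+V$, identify $U^1=a\,u_0$ and $V^1=a'\,\ub_0$ by weight, use the $z^0$ coefficient (the only coupled one) to force $a'=-a$, observe the exact cancellation $f_uU^1 = T^l\partial_{u_l}(u_{j+1}U^1_{u_j})$ coming from $T^1=u_0f_u$, and then kill $U^n,V^n$ for $n\geq 2$ by induction via Lemma \ref{lem:eMinusOneKernel}. The only differences are cosmetic (you run the recursion before the $n=0$ coupling, and you make explicit the identification of the $z^n$ coefficient with $-(n+1)\ol{e_{-1}}(U^{n+1})$ plus lower terms, which the paper leaves implicit); note only that your closing remark is slightly misleading, since linear independence of $f$ and $f_u$ does enter indirectly through the proof of Lemma \ref{lem:eMinusOneKernel} itself.
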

\begin{proof}
We have  
\[
U=b z u_0+\sum_{n \geq 2}^{k+1} U^n z^n,\qquad V=c \zb \ub_0+\sum_{n \geq 2}^{m+1} V^n \zb^n
\] 
for some constants $b,c$.
Then the first terms in \eqref{eq:Un} and \eqref{eq:Vn} lead to
\begin{align*}
\mce(U+V)&=z^0 \left(-f U^1_{u_0}-f V^1_{\ub_0} \right)\\
&+z^1 \left[ f_u U^1 - T^l \dd{}{u_l} \left(u_{j+1}U^1_{u_j}+2 U^2 \right)  \right]  \\
&+\zb^1 \left[ f_u V^1 - \ol{T}^l \dd{}{\ub_l} \left(\ub_{j+1}V^1_{\ub_j}+2 V^2 \right)  \right]  +\ldots
\end{align*}
The $z^0$--term implies that $c=-b$.  The terms from $U^1$ in the $z^1$--term cancel so that by Lemma \ref{lem:eMinusOneKernel} we have $U^2=0$.  Similarly, the $\zb^1$--term implies that $V^2=0$.  Then, using induction, \eqref{eq:Un} and \eqref{eq:Vn} imply that $U^n=V^n=0$ for $n>1$.
\end{proof}

\begin{lem}\label{lem:shapeofP}
If $\wt(P)=d>0$, then $P_z=0$, i.e.~$P$ is a polynomial in the $u_j$ of the form $P=b u_{d-1}+\dots$ with $b\neq 0$.  If $\wt(P)=d<0$, then $P_{\zb}=0$, i.e.~$P$ is a polynomial in the $\ub_j$ of the form $P=c \ub_{-d-1}+\dots$ with $c\neq 0$.  
\end{lem}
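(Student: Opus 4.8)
The plan is to prove the claim for $U$ (the argument for $V$ being entirely analogous by complex conjugation) by showing that when $\wt(P)=d>0$, the only surviving coefficient is $U^0$, so that $U=U^0$ is $z$-independent. I would begin by recalling what the previous corollaries have already established: $P=U(z,u_j)$ with $V=0$ since $d>0$, and $U=\sum_{n=0}^{k+1-d}U^n z^n$ is a polynomial in $z$ whose coefficients $U^n$ are non-constant weighted-homogeneous polynomials in the $u_j$ with $\wt(U^n)=d+n$, together with the top-degree structure $U^{k+1-d}=b(u_k+\dots)$ for some $b\in\C$.

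The key step is to run a downward induction on $n$, peeling off the coefficients $U^n$ starting from the highest power of $z$ and showing each vanishes until only $U^0$ remains. The engine is Equation \eqref{eq:Un}: since $d>0$ forces $V=0$, the vanishing $\mce(U)=0$ splits into the separate vanishing of each $z^n$--coefficient for $n\geq 1$, namely
\[
f_u U^n - T^l \dd{}{u_l}\left((n+1)U^{n+1}+u_{j+1}U^n_{u_j}\right)=0.
\]
Reading this at the top index $n=k-d$ (so that $U^{n+1}=U^{k+1-d}$ is the known top coefficient) I would isolate the operator $\ol{e_{-1}}$ applied to $U^{k+1-d}$ and invoke Lemma \ref{lem:eMinusOneKernel} to force the relevant coefficient to vanish, and then propagate this downward: assuming $U^{n+1}=0$, the relation collapses to $f_u U^n = T^l\,\partial_{u_l}(u_{j+1}U^n_{u_j})$, and a lexicographic/highest-variable analysis of the type used in Lemma \ref{lem:LinearInU}, combined with the injectivity of $\ol{e_{-1}}$ on non-constant polynomials from Lemma \ref{lem:eMinusOneKernel}, forces $U^n=0$ as well. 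Carrying the induction down to $n=1$ leaves $P=U^0$, whence $P_z=0$; the weighted-degree constraint $\wt(U^0)=d$ then pins down the leading term as $P=b\,u_{d-1}+\dots$, with $b\neq 0$ because $P$ is nonzero of weighted degree $d$ and $u_{d-1}$ is the unique highest variable of that degree compatible with Corollary \ref{cor:Highest}.

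The main obstacle I anticipate is the bookkeeping at each inductive step: the equation relating $U^n$ to $U^{n+1}$ mixes the $f_u U^n$ term against the derivative terms, and after substituting $U^{n+1}=0$ one must argue that no cancellation can save a nonzero $U^n$. This is exactly where the linear independence of $f$ and $f_u$ (guaranteed by the standing hypothesis that $f$ does not satisfy a first--order ODE) enters, just as it did in the proofs of Lemma \ref{lem:NoZMixing} and Lemma \ref{lem:eMinusOneKernel}: the highest-lexicographic monomial produced by $T^l\partial_{u_l}$ carries an $f_u$ factor that cannot be matched by the $f_u U^n$ term on a variable of strictly higher weighted degree, producing a contradiction unless $U^n$ itself vanishes. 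I would expect the cleanest route is to recognize $T^l\partial_{u_l}(u_{j+1}U^n_{u_j})-f_uU^n$ as a form to which the Lemma \ref{lem:eMinusOneKernel} machinery applies almost verbatim, so that the bulk of the combinatorial work has effectively been done already and the remaining argument is a short descent.
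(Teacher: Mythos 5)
Your overall setup is right (reduce to $U=\sum_n U^n z^n$ with $V=0$, expand $\mce(U)=0$ into $z^n$--coefficients via \eqref{eq:Un}, and aim to use linear independence of $f$ and $f_u$), and you have even located the correct equation — the one at index $n=k-d$. But the core mechanism of your induction is false. If $U^{n+1}=0$, the $z^n$--coefficient of \eqref{eq:Un} collapses to $f_u U^n - T^l\dd{}{u_l}\left(u_{j+1}U^n_{u_j}\right)=0$, and this is \emph{exactly} Equation \eqref{eq:ED} for a $z$--independent weighted--homogeneous polynomial, i.e.\ the statement that $U^n$ is a candidate element of $V_{d+n}$. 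Such polynomials do not have to vanish — nonzero solutions of precisely this equation are what the whole paper constructs (e.g.\ the Pinkall--Sterling polynomials $P^i\in V_{2i+1}$ of Lemma \ref{lem:PSSolutions}). Indeed, if your inductive step were valid it would apply verbatim at $n=0$ and force $P=U^0=0$, proving $V_d=0$ for every $d$ and contradicting Section \ref{sec:fuu=betaf}. Your alternative mechanism fares no better: you cannot ``isolate'' $\ol{e_{-1}}(U^{k+1-d})$ in the equation at index $k-d$ and apply Lemma \ref{lem:eMinusOneKernel}, because that equation mixes $(k-d+1)\,\ol{e_{-1}}(U^{k+1-d})$ with the unknown terms $f_u U^{k-d}$ and $\ol{e_{-1}}\left(u_{j+1}U^{k-d}_{u_j}\right)$, so nothing lets you conclude that $\ol{e_{-1}}(U^{k+1-d})$ alone vanishes.

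The paper's proof inverts the roles you assign: the equation at index $n$ is used to constrain $U^{n+1}$, not $U^n$, and no induction is needed — a single contradiction at the top suffices. Assume $k+1-d>0$, write $U^{k+1-d}=bu_k+b'u_{k-1}u_0+\ldots$ (with $b\neq 0$ by Lemma \ref{lem:LinearInU} and the definition of $k$) and $U^{k-d}=b''u_{k-1}+\ldots$, and extract the coefficient of the single monomial $u_{k-1}$ in the $z^{k-d}$--coefficient of \eqref{eq:Un}. The contributions of the \emph{unknown} coefficient $U^{k-d}$ cancel identically: $f_uU^{k-d}$ contributes $f_ub''u_{k-1}$, while $-T^l\dd{}{u_l}\left(u_{j+1}U^{k-d}_{u_j}\right)$ contributes $-b''T^k=-f_ub''u_{k-1}+\ldots$ via $j=k-1$, $l=k$. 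What survives comes entirely from the top coefficient: $-(k-d+1)\left(T^0b'u_{k-1}+T^kb\right)=-(k-d+1)u_{k-1}\left(fb'+f_ub\right)+\ldots$, whose vanishing contradicts the linear independence of $f$ and $f_u$ since $b\neq 0$. Hence $k+1-d=0$, so $P=U^0=bu_{d-1}+\ldots$ has no $z$--dependence at all, with no coefficient-by-coefficient descent — which is fortunate, because as noted above such a descent cannot exist.
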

\begin{proof}
Suppose $d>0$ so that $P=\sum_{n=0}^{k+1-d}U^n z^n$. Assuming that $k+1-d>0$ will lead to a contradiction. We have 
\[
U^{k+1-d}=b u_{k}+b' u_{k-1}u_0+\ldots
\] with $b\neq 0$, and  
\[
U^{k-d}=b'' u_{k-1}+\ldots
\]  
The $z^{k-d}$--coefficient in \eqref{eq:Un} reads
\begin{align*}
f_u U^{k-d}-T^l\dd{}{u_l}\left( (k-d+1) U^{k-d+1} + u_{j+1} U^{k-d}_{u_j}\right);
\end{align*}
neglecting summands without $u_{k-1}$ and, finding that the $b''$ terms cancel, we obtain
\begin{align*}
0= - (k-d+1) (T^0 b'u_{k-1} + T^k b) =-(k-d+1) u_{k-1}(fb'+f_ub)+\ldots
\end{align*}
Since $f$ and $f_u$ are linearly independent, $(k-d+1)>0$ and $b\neq 0$, we have arrived at a contradiction. A similar argument works for $V$.
\end{proof}

\begin{cor}\label{cor:dimVdbound}
For all $d$, we have $\dim_{\C}(V_d) \leq 1$. 
\end{cor}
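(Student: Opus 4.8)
The plan is to treat the three cases $d=0$, $d>0$, and $d<0$ separately, doing the real work in the case $d>0$ and reducing $d<0$ to it by complex conjugation. The case $d=0$ is already finished: Lemma~\ref{lem:V0} shows that every element of $V_0$ is a scalar multiple of $q=zu_0-\zb\ub_0$, so $\dim_{\C}V_0\le 1$.

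For $d>0$ I would read off a linear functional from Lemma~\ref{lem:shapeofP}. That lemma gives, for any \emph{nonzero} $P\in V_d$, that $P_z=0$ and $P=b\,u_{d-1}+\dots$ with $b\neq0$, where no other monomial of $P$ equals $u_{d-1}$; indeed $u_{d-1}$ is the unique weighted-degree-$d$ monomial in the $u_j$ that contains the factor $u_{d-1}$, since every $u_j$ has $\wt(u_j)=j+1\ge 1$ and so cannot be paired with a weight-zero factor. Hence ``coefficient of the monomial $u_{d-1}$'' is a well-defined $\C$-linear map $\ell\colon V_d\to\C$, and Lemma~\ref{lem:shapeofP} says exactly that $\ell(P)\neq0$ whenever $P\neq0$, i.e.\ $\ker\ell=0$. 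Injectivity of $\ell$ then forces $\dim_{\C}V_d\le\dim_{\C}\C=1$.

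For $d<0$ I would use that $\mce$ commutes with conjugation on $\M{\infty}$: since $f_u$ is real and $[e_{-1},e_{\ol{-1}}]$ annihilates functions there, $\mce(P)=0$ if and only if $\mce(\ol P)=0$; moreover the constraints \eqref{eq:NoMixing} are conjugation-invariant and $\wt(\ol P)=-\wt(P)$. Thus $P\mapsto\ol P$ is a conjugate-linear isomorphism $V_d\to V_{-d}$, so $\dim_{\C}V_d=\dim_{\C}V_{-d}\le1$ by the previous case. (Equivalently, one can rerun the $d>0$ argument verbatim using the coefficient of $\ub_{-d-1}$, which the second half of Lemma~\ref{lem:shapeofP} guarantees is nonzero.) I expect no real obstacle here: all the effort has already gone into Lemma~\ref{lem:shapeofP}, and the only point needing a moment's care is confirming that the leading coefficient is an honest linear functional, which is the weighted-degree bookkeeping noted above.
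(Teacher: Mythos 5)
Your proposal is correct and follows essentially the same route as the paper: the paper also deduces the bound from the normalized leading terms ($b\,u_{d-1}+\dots$ for $d>0$, $c\,\ub_{-d-1}+\dots$ for $d<0$, $a\,(zu_0-\zb\ub_0)$ for $d=0$ from Lemmas \ref{lem:V0} and \ref{lem:shapeofP}) together with the linearity of $\mce(P)=0$, which is exactly your injective-linear-functional argument spelled out. Your conjugation reduction for $d<0$ is a harmless variant that the paper itself uses elsewhere (in Lemma \ref{lem:Vdevenvanishes}).
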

\begin{proof}
We know that any nonzero element in $V_d$ is of the form $b u_{d-1}+\ldots$ with $b\neq 0$ for $d>0$, or $c \ub_{d-1}+\ldots$ with $c\neq 0$ for $d<0$ and $a \cdot (zu_0-\zb \ub_0)$ with $a\neq 0$ for $d=0$.  The bound on dimension then follows because $\mce(P)=0$ is a linear equation.  \end{proof}

\begin{lem}\label{lem:Existence}
For all $d$ the linear map
\begin{align*}
&V_d \to \hb_{\C}  \\
& P \mapsto [\vp_P]
\end{align*}
is injective.
\end{lem}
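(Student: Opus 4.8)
The plan is to turn injectivity into a nonvanishing statement and then settle it with the uniqueness of normal forms. By Corollary \ref{cor:dimVdbound} we have $\dim_{\C}V_d\le 1$, so a linear map out of $V_d$ is injective precisely when it does not kill a nonzero vector; hence it suffices to show that $P\ne 0$ implies $[\vp_P]\ne 0$ in $\hb_{\C}$. Since $P$ satisfies \eqref{eq:ED}, the Remark following Lemma \ref{lem:ifPhiclosed} (applied with $G=P$ and using \eqref{eq:phiexplicit}) shows that $[\vp_P]$ is a genuine class, so that $\ed\vp_P$ is a closed $2$--form lying in $\mci_{\C}$. Through the isomorphism $\ed\colon\hb_{\C}\overset{\cong}{\to}H^2(\M{\infty},\mci_{\C})$ one has $[\vp_P]=0$ if and only if $[\ed\vp_P]=0$, and I would work with the latter condition.

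The heart of the argument is to identify $\ed\vp_P$ with the normal form $\Phi_P$ of Proposition \ref{prop:NormalForm} whose generating function---the coefficient of $\psi$---is exactly $P$; this is at the same time the existence half of the statement. Weighted homogeneity makes it tractable. As in the proof of Lemma \ref{lem:ifPhiclosed}, Cartan's formula gives $d\,\Phi_P=\mcl_v\Phi_P=\ed(v\lhk\Phi_P)+v\lhk\ed\Phi_P=d\,\ed\vp_P+v\lhk\ed\Phi_P$, whence $\Phi_P-\ed\vp_P=\frac{1}{d}\,v\lhk\ed\Phi_P$; so the identity $\ed\vp_P=\Phi_P$ is equivalent to the closedness of $\Phi_P$. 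I would establish it by computing the generating function of the class $[\ed\vp_P]$ from the explicit expression \eqref{eq:phiexplicit} for $\vp_P$ modulo $\I{\infty}$, finding it equal to $P$; since the formulae of Proposition \ref{prop:NormalForm} reconstruct an entire normal form from its generating function, this determines the normal form of $[\ed\vp_P]$ to be $\Phi_P$, which in turn shows $\Phi_P$ is exact in the ideal complex and hence a genuine representative of its class in $H^2(\M{\infty},\mci_{\C})$.

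The conclusion is then immediate. The form $\Phi_P$ is in normal form and its $\psi$--coefficient is $P$, so $\Phi_P\ne 0$ as soon as $P\ne 0$. Because characteristic cohomology classes possess \emph{unique} representatives in normal form \cite{Bryant1995}---with the trivial class represented by the zero form---a nonzero normal form can never be cohomologous to zero; hence $[\ed\vp_P]=[\Phi_P]\ne 0$, and therefore $[\vp_P]\ne 0$. The exceptional weight $d=0$, where $V_0=\C\cdot q$ and $\vp_P$ is taken to be a multiple of the $\vp_0$ of \eqref{eq:wd0}, is covered by the same reasoning: $\ed\vp_0$ is the classical normal form already shown to be noncohomologous to zero in Section \ref{sec:Classical}.

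The main obstacle is the middle step---passing from ``$P$ solves \eqref{eq:NoMixing} and \eqref{eq:ED}'' to ``$\Phi_P$ is closed with generating function $P$''---and I expect it to be the only part demanding real computation, carried out through \eqref{eq:phiexplicit} and the homogeneity identity rather than a direct verification of $\ed\Phi_P=0$. A self--contained alternative that sidesteps it is to argue by contradiction at the level of forms: assuming $[\vp_P]=0$, write $\vp_P\equiv\ed g\bmod\I{\infty}$, match the $\zeta$-- and $\zetab$--coefficients using the reductions of $\ed q$ and $\ed P$ modulo $\I{\infty}$, and then run a highest--weighted--variable argument in the spirit of Lemma \ref{lem:eMinusOneKernel} to force the leading coefficient of $P$, and hence $P$ itself, to vanish.
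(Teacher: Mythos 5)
Your closing step (a nonzero closed form in normal form cannot represent the trivial class, by uniqueness of normal-form representatives) is fine, but the middle step on which everything rests has a genuine gap, and it inverts the paper's logical order. You want to first establish $\ed\vp_P=\Phi_P$ (equivalently, as you correctly observe via Cartan's formula, the closedness of $\Phi_P$) and then deduce nontriviality. In the paper this identification is Lemma \ref{lem:dphi=Phi}, and its proof \emph{uses} Lemma \ref{lem:Existence}: once $[\vp_P]\neq 0$ is known, $[\ed\vp_P]$ is a nonzero class of weighted degree $d$, its unique normal-form representative must be $\Phi_{P'}$ with $P'\in V_d$ by Proposition \ref{prop:NormalForm}, and $\dim_{\C}V_d\leq 1$ forces $P'=cP$ with $c\neq 0$, whence $\Phi_P$ is closed. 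Your proposal to obtain the identification first, by ``computing the generating function of the class $[\ed\vp_P]$ from \eqref{eq:phiexplicit},'' is not something the cited tools can deliver: the generating function of a class is by definition the $\psi$--coefficient of its normal-form representative, and extracting it from the non-normal representative $\ed\vp_P$ requires actually carrying out the reduction modulo $\ed(\I{\infty})$ --- precisely the ``elaborate equations'' the paper says it is circumventing. Abstract uniqueness of normal forms, Proposition \ref{prop:NormalForm}, and weighted homogeneity only tell you that the normal form of $[\ed\vp_P]$ is $\Phi_{cP}$ for \emph{some} $c\in\C$; nothing in your argument excludes $c=0$, which is exactly the case $[\vp_P]=0$ you are trying to rule out. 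As the main route stands, it assumes what is to be proven.

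The ``self-contained alternative'' you sketch in one sentence at the end is, in essence, the paper's actual proof --- but the computation you wave at is the entire content of the lemma. The paper replaces $\vp_P$ by $\tilde\vp_P=qe_{-1}(P)\,\zeta+e_{\ol{-1}}(q)P\,\zetab$ (which equals $d\cdot\vp_P$ plus an exact form, modulo the ideal), checks it defines a class using $\mce(P)=\mce(q)=0$, assumes $\ed\tilde\vp_P=\ed\alpha$ with $\alpha=\sum_j(a^j\eta_j+b^j\etab_j)\in\I{\infty}$, solves the $\zeta\w\eta_j$--coefficient equations recursively for the $a^j$ (finding $a^d=-q$ and an explicit alternating sum for $a^1$), and then derives the contradiction by comparing \emph{constant terms} in the $\eta_1\w\eta_{d-1}$--coefficient identity $a^{d-1}_{u_0}=a^1_{u_{d-2}}$: one side has constant term $1$, the other $(-1)^d d$, and these never agree for $d>0$. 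Note that the contradiction is not obtained by forcing the leading coefficient of $P$ to vanish in the spirit of Lemma \ref{lem:eMinusOneKernel}, but by this mismatch between two recursively determined coefficients of $\alpha$. Without that computation, the proposal does not contain a proof.
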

\begin{proof} 
Assume that $d>0$.  The $d<0$ case follows by complex conjugation.  Let $P\in V_d$ be nonzero and normalized, i.e.~$P$ is a polynomial of the form 
\[
P=u_{d-1}+\ldots.
\]
The $1$--form $\vp_P$ was defined in \eqref{eq:phiP}, and a more explicit form was given in \eqref{eq:phiexplicit}.
It will be convenient to modify $\vp_P$ by an exact form as follows:  If we define $\tilde{E}'=qe_{-1}(P)$,  $\tilde{E}''=e_{\ol{-1}}(q) P$ and 
\[
\tilde{\vp}_P= \tilde{E}' \zeta+ \tilde{E}'' \zetab,
\]
then
\[
\tilde{\vp}_P \equiv d\cdot \vp_P + \ed (qP) \; \mod \; \I{d}.
\]
First, we have to show that $\tilde{\vp}_P$ defines a cohomology class in $\hb_{\C}$.  The only obstacle that could arise is that $\ed \tilde{\vp}_P$ could have a $\zeta \w \zetab$ term.  However the corresponding coefficient is 
\begin{align*}
-e_{\ol{-1}}\tilde{E}'+e_{-1}\tilde{E}'' &= -e_{\ol{-1}}(q)e_{-1}P-qe_{\ol{-1}}e_{-1}P + e_{\ol{-1}}e_{-1}(q)P+e_{\ol{-1}}(q)e_{-1}P\\
&=qf_uP-f_uqP=0
\end{align*}
 because $P\in V_d$ and $q\in V_0$.

Thus, it remains to show that $[\tilde{\vp}_P] =d\cdot[\vp_P]$ is a nontrivial class. The one--form $\tilde{\vp}_P$ represents $0 \in \hb_{\C}$ if and only if 
\be \label{eq:phialpha}
\ed \tilde{\vp}_P=\ed  \alpha
\ee
for some $\alpha \in \I{l}$.   Assuming that $\alpha=\sum_{j=0}^{l}(a^j \eta_j+b^j \etab_j)$ (with $b^0=0$) satisfies \eqref{eq:phialpha} will lead to a contradiction.   

For $j>1$, the $\zeta\w \eta_j$--coefficient of \eqref{eq:phialpha} implies 
\[
-qe_j e_{-1}P=-\tilde{E}'_{u_{j-1}} = e_{-1}a^j+a^{j-1}, 
\]
from which we can determine the $a^j$ recursively: We have $e_{-1}P=u_d+\ldots$, so $a^j$ vanishes for $j>d$. The first two non-vanishing coefficients are 
\be\nonumber
a^d=-qe_{d+1}e_{-1}P=-q
\ee
and
\be\label{eq:ad-1}
a^{d-1}=-e_{-1}a^{d}-q e_de_{-1}P=e_{-1}q-q e_de_{-1}P.
\ee
We obtain 
\begin{align}
a^1&=\sum_{j=2}^{d+1}(-1)^{j-1} (e_{-1})^{j-2}(qe_je_{-1}P) \label{eq:a1}\\
&= (-1)^d\left[ (e_{-1})^{d-1}(q)-(e_{-1})^{d-2}(qe_de_{-1}P)\pm\ldots +(-1)^{d-1}qe_2e_{-1}P\right]\nonumber
\end{align}
The condition on the $\eta_1\w \eta_{d-1}$--coefficients of \eqref{eq:phialpha} is
\be\nonumber
a^{d-1}_{u_0}=a^1_{u_{d-2}},
\ee
which will provide a contradiction.  One finds that  
\[
(e_{-1})^jq=ju_{j-1}+zu_j+\overline{z}T^{j-1}.
\]
Using Equation \eqref{eq:ad-1} we compute 
\[
a^{d-1}_{u_0}=1-ze_de_{-1}P-qe_1e_de_{-1}P,
\]
which has a constant term $1$. On the other hand, the only way to obtain a constant term from differentiating \eqref{eq:a1} with respect to $u_{d-2}$ is via the summand $(-1)^d (e_{-1})^{d-1}(q)$:
\[
a^1_{d-1}=(-1)^d d + \text{non-constant terms}.
\]
But $(-1)^d d\neq 1$ for all $d >0$.
\end{proof}

\begin{lem}\label{lem:dphi=Phi}
If $P\in V_d$ then $\ed \vp_P=\Phi_P$.
\end{lem}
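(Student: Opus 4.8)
The plan is to handle the generic case $d\neq 0$ first, reduce the identity to the closedness of $\Phi_P$, and then exploit the fact—already available from Lemma \ref{lem:Existence}—that $\vp_P$ is a genuine undifferentiated conservation law. Since $P\in V_d$ has weighted degree $d$, and the remark after Lemma \ref{lem:BFormula} gives $\wt(B^{ij})=\wt(P)-i-j$, every term of $\Phi_P$ (namely $P\psi$, $\eta_0\w\rho$, and $B^{ij}\eta_i\w\eta_j$) has weighted degree $d$, so $\Phi_P\in\Om^2_d$ and $\mcl_v\Phi_P=d\,\Phi_P$. Recalling $\vp_P=\tfrac1d\,v\lhk\Phi_P$, Cartan's formula gives
\[
\ed\vp_P=\tfrac1d\,\ed(v\lhk\Phi_P)=\tfrac1d\big(\mcl_v\Phi_P-v\lhk\ed\Phi_P\big)=\Phi_P-\tfrac1d\,v\lhk\ed\Phi_P,
\]
exactly as in the proof of Lemma \ref{lem:ifPhiclosed}. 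Thus $\ed\vp_P=\Phi_P$ is equivalent to $v\lhk\ed\Phi_P=0$, and it suffices to prove that $\Phi_P$ is closed. The degenerate case $d=0$ is separate: there $V_0=\C\cdot q$, and the explicit $1$--form $\vp_0$ of \eqref{eq:wd0} was already shown in Section \ref{sec:Classical} to satisfy $\ed\vp_0=\Phi_{q}$, so nothing further is required.

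Rather than verify $\ed\Phi_P=0$ by expanding the $3$--form coefficient by coefficient—which would force me to check the overdetermined recursion \eqref{eq:Brecursive} for the $B^{ij}$ together with the closedness constraint of Corollary \ref{cor:B1kvanishes}—I would instead compute $\ed\vp_P$ directly and match it to $\Phi_P$; the closedness of $\Phi_P$ then follows for free from $\ed^2=0$. This is the real point of routing everything through the undifferentiated form: $\vp_P$ is a simple $1$--form, so only one exterior derivative is needed. Using the explicit representative \eqref{eq:phiexplicit} (equivalently the form $\tilde{\vp}_P$ of Lemma \ref{lem:Existence}), I would expand $\ed\vp_P$ in the standard coframe. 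The proof of Lemma \ref{lem:Existence} already shows that $\vp_P$ represents a class in $\hb_{\C}$, i.e.\ $\ed\vp_P\in\mci_{\C}$, so the $\zeta\w\zetab$ component is absent—this is precisely where $P\in V_d$, i.e.\ Equation \eqref{eq:ED}, enters. The relations \eqref{eq:NoMixing} (namely $P_u=P_{u_i,\ol{u_j}}=0$), which force $P$ to split with no mixing of $u_j$ and $\ub_j$, ensure that no $\eta_i\w\etab_j$ terms appear, so that $\ed\vp_P$ is of type $(2,0)+(0,2)$ modulo $\eta_0$ and is already in the shape \eqref{eq:NormalForm}.

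To finish I would identify the components of $\ed\vp_P$ with those of $\Phi_P$: reading off its $\psi$--coefficient should return $P$, and its $\eta_0$--part should assemble into $\eta_0\w(-\tfrac12 J\ed P)$. Once $\ed\vp_P$ is seen to be in normal form with generating function $P$, Proposition \ref{prop:NormalForm}—which determines $\rho$ and every $B^{ij}$ uniquely from the generating function—forces $\ed\vp_P=\Phi_P$. The hard part will be the $\eta_i\w\eta_j$--components: extracting them from $\ed\vp_P$ produces iterated applications of $e_{-1}$, and one must check that the resulting binomial coefficients organize themselves into the closed formula \eqref{eq:BbyA}. This bookkeeping is exactly the content that a direct closedness computation would demand, but carrying it out on the $1$--form $\vp_P$ confines it to a single exterior derivative and lets $\ed^2=0$ supply the closedness of $\Phi_P$ automatically.
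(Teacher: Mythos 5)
Your reduction is set up correctly: the $d=0$ case is indeed disposed of in Section \ref{sec:Classical}, Cartan's formula (Lemma \ref{lem:ifPhiclosed}) shows the claim follows once $\Phi_P$ is closed, and your observation that $\ed\vp_P\in\mci_{\C}$ (via the computation in the proof of Lemma \ref{lem:Existence}, Equation \eqref{eq:ED}, and the fact that $\mci^{(\infty)}$ is a differential ideal) is sound. The gap is in the decisive step: you propose to expand $\ed\vp_P$ in the coframe, check that no $\zeta\w\eta_i$ ($i\geq 2$), $\zetab\w\eta_i$, or $\eta_i\w\etab_j$ terms survive, that the $\psi$--coefficient is $P$, and that the $\eta_i\w\eta_j$--coefficients reproduce \eqref{eq:BbyA} — but none of this is actually carried out; it is announced (``I would expand,'' ``one must check''). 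This is not routine bookkeeping one may defer. Since $\vp_P=\frac1d\,v\lhk\Phi_P$ contains all the ideal terms $q\rho$, $B^{ij}(\eta_i(v)\eta_j-\eta_j(v)\eta_i)$, etc., differentiating it reintroduces the derivatives of every $B^{ij}$ together with the structure equations $\ed\eta_j=-\eta_{j+1}\w\zeta+\tau^{j-1}\w\zetab$, so the verification you postpone is essentially equivalent to checking $\ed\Phi_P=0$ coefficient by coefficient — precisely the ``complicated calculation'' that the paper states (end of Section \ref{sec:FirstApproximation}) its Sections \ref{sec:Homogeneous}--\ref{sec:Existence} are designed to circumvent. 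As it stands, the hardest and only nontrivial part of the lemma is missing.

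The paper's own proof shows that no computation is needed, and it uses two ingredients your outline never invokes: the \emph{nontriviality} conclusion of Lemma \ref{lem:Existence} and the dimension bound of Corollary \ref{cor:dimVdbound}. Namely: since $P\neq 0$ gives a nontrivial class $[\vp_P]\in\hb_{\C}$, its image $[\ed\vp_P]$ under the isomorphism $\ed:\hb_{\C}\to H^{2}(\M{\infty},\mci_{\C})$ is a nontrivial class of differentiated conservation laws, whose unique normal-form representative is $\Phi_{P'}$ for some generating function $P'$ satisfying \eqref{eq:NoMixing} and \eqref{eq:ED} (Proposition \ref{prop:NormalForm}). Exterior differentiation preserves weighted degree, so $P'\in V_d$, whence $P'=cP$ with $c\neq 0$ by Corollary \ref{cor:dimVdbound} and nontriviality. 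Thus $\Phi_P=\frac1c\Phi_{P'}$ is closed, and Lemma \ref{lem:ifPhiclosed} finishes. If you want to keep your direct-computation route, you must actually execute the coframe expansion and the binomial-coefficient cancellations; otherwise, replace that step by the uniqueness-plus-dimension argument above.
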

\begin{proof}
The $d=0$ case was done explicitly in Section \ref{sec:Classical}, so assume $d \neq 0$.  If $P\in V_d$ is nonzero, then $[\vp_P]$ is a nontrivial class and so $[\ed \vp_P]=[\Phi_{P'}]$ for some other solution $P'$ to \eqref{eq:NoMixing} and \eqref{eq:ED}.  Weighted degree is preserved by exterior differentiation, so $\wt(P')=\wt(P)$ and hence $P'\in V_d$, which implies that $P'=c \cdot P$ for some constant $c$, by Corollary \ref{cor:dimVdbound}.  Since $\Phi_{P'}$ is closed, this implies that $\Phi_P$ is closed. Then by Lemma \ref{lem:ifPhiclosed} we reach the desired conclusion.
\end{proof}

\begin{cor}\label{cor:VdPhi}
If $P \in V_{k+1}$ then $\Phi_P$ is closed and $\Phi_P+\ol{\Phi_P}$ is a real element of $\mcc_{(k)}$.
\end{cor}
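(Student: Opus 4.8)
The plan is essentially to assemble the results established above; the one point that needs genuine attention is the identification of the level. First I would dispose of the closedness of $\Phi_P$. Since $P\in V_{k+1}$ is a solution of \eqref{eq:NoMixing} and \eqref{eq:ED} of nonzero weighted degree $d=k+1$, Lemma \ref{lem:dphi=Phi} applies and gives $\ed\vp_P=\Phi_P$; hence $\ed\Phi_P=\ed\ed\vp_P=0$, so $\Phi_P$ is closed (this is in fact already recorded inside the proof of that lemma). Conjugating, $\ol{\Phi_P}$ is closed as well, so $\Phi_P+\ol{\Phi_P}$ is a closed, real $2$--form.

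Next I would verify that $\Phi_P+\ol{\Phi_P}$ is a normal--form representative lying in the ideal. Recall from Section \ref{sec:Homogeneous} that $\Phi_P=\eta_0\w\rho+P\psi+\sum_{i<j}(B^{ij}(P)\eta_i\w\eta_j+\ol{B^{ij}(\ol P)}\etab_i\w\etab_j)$ with $\rho=-\frac{1}{2}J\ed P$. Because both $P\mapsto-\frac{1}{2}J\ed P$ and the coefficient map $P\mapsto B^{ij}(P)$ of \eqref{eq:BbyA} are $\C$--linear, adding $\Phi_P$ to its conjugate reproduces exactly the normal form \eqref{eq:NormalForm} of Proposition \ref{prop:NormalForm} with the real generating function $A=P+\ol P$. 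Every term appearing there ($\eta_0\w\rho$, $A\psi$, and $B^{ij}\eta_i\w\eta_j$) lies in $\mci^{(\infty)}$, so $\Phi_P+\ol{\Phi_P}\in\mci^{(\infty)}\cap\Om^2(\M{\infty},\R)$ and is in normal form.

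Finally I would pin down the level, which is where the real work lies. By Lemma \ref{lem:shapeofP}, $\wt(P)=k+1>0$ forces $P_z=0$ and makes $P$ a polynomial in $u_0,\dots,u_k$ with leading term a nonzero multiple of $u_k$, so $P$ is a function on $\M{k}$. A weighted--degree count then confines every coefficient of $\Phi_P$ to $\M{k}$: the only possible top--order contribution to $\rho=-\frac{1}{2}J\ed P$ is the term $P_{u_k}\om_{k+1}$ of $\del P$, and $P_{u_k}$ has weighted degree $0$, hence is constant, while each $B^{ij}(P)$ is a polynomial in the $u_i$ of weighted degree $k+1-i-j\le k-2$ and so involves only $u_0,\dots,u_{k-3}$. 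Thus $\Phi_P$, and therefore $\Phi_P+\ol{\Phi_P}$, is defined on $\M{k}$, and combined with the previous paragraph this gives $\Phi_P+\ol{\Phi_P}\in\mcc_{(k)}$. The step requiring the most care---the main obstacle---is exactly this one: one must make sure that the applications of $e_{-1}$ in \eqref{eq:BbyA}, read on $\M{\infty}$, do not smuggle in a dependence on $u_{k+1}$. The weighted--degree bounds are precisely what rule this out, and they are what produce the clean index shift between the degree $k+1$ of the generating function and the level $k$ of the resulting conservation law.
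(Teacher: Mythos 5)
Your proof is correct and follows essentially the route the paper intends: the paper states this corollary without proof as an immediate consequence of Lemma \ref{lem:dphi=Phi} (whose proof already records the closedness of $\Phi_P$) together with the normal-form structure of Proposition \ref{prop:NormalForm} and Lemma \ref{lem:shapeofP}. Your added care about the level --- checking via the weighted-degree bounds on $P_{u_k}$ and on $\wt(B^{ij})=k+1-i-j$ that no dependence on $u_{k+1}$ arises, so that $\Phi_P+\ol{\Phi_P}$ is genuinely defined on $\M{k}$ --- is exactly the detail the paper leaves implicit, and you resolve it correctly.
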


\begin{lem}\label{lem:Vdevenvanishes}
For even degree $d\neq 0$, $V_d=0$.
\end{lem}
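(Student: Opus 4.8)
The plan is to combine the explicit shape of elements of $V_d$ from Lemma \ref{lem:shapeofP} with the parity obstruction of Corollary \ref{cor:B1kvanishes}. It suffices to treat $d>0$: if $P\in V_d$ with $d<0$, then $\ol{P}$ solves $\mce(\ol{P})=\ol{\mce(P)}=0$ (using $[e_{-1},e_{\ol{-1}}]=0$ on $\M{\infty}$) and inherits the mixing conditions \eqref{eq:NoMixing}, so that $\ol{P}\in V_{-d}$; hence the vanishing of $V_d$ reduces to that of $V_{-d}$ with $-d>0$ even.

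So assume $d>0$ is even and, for contradiction, that some nonzero $P\in V_d$ exists. By Lemma \ref{lem:shapeofP} we may write $P=b\,u_{d-1}+\dots$ with $b\neq 0$, a polynomial in $u_0,\dots,u_{d-1}$; thus $P$ is defined on $\M{d-1}$, and $u_{d-1}$ is its highest variable, occurring linearly by Lemma \ref{lem:LinearInU}. Applying Corollary \ref{cor:VdPhi} with $k+1=d$, the form $\Phi_P+\ol{\Phi_P}$ is a real differentiated conservation law in $\mcc_{(k)}$ of level $k=d-1$, whose generating function is $A=P+\ol{P}$.

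Now comes the parity input. Since $d$ is even, $k=d-1$ is odd, so Corollary \ref{cor:B1kvanishes} yields $A_{u_k}=0$. Because $P$ depends only on the $u_j$ while $\ol{P}$ depends only on the $\ol{u}_j$, we have $A_{u_k}=P_{u_{d-1}}$, whence $P_{u_{d-1}}=0$. On the other hand $\wt(P_{u_{d-1}})=d-\wt(u_{d-1})=0$, and a weighted-homogeneous polynomial of degree $0$ in the positively-weighted variables $u_0,\dots,u_{d-2}$ is necessarily constant; since $u_{d-1}$ appears linearly, that constant is precisely the leading coefficient $b$. Hence $b=P_{u_{d-1}}=0$, contradicting $b\neq 0$, and therefore $V_d=0$.

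I expect the only delicate point to be the bookkeeping that identifies the generating function of $\Phi_P+\ol{\Phi_P}$ with $A=P+\ol{P}$ and reduces $A_{u_k}$ to $P_{u_{d-1}}$; once this is settled, the weighted-degree count makes the contradiction immediate. No computation beyond the already-established Lemmas \ref{lem:shapeofP}, \ref{lem:LinearInU} and Corollaries \ref{cor:VdPhi}, \ref{cor:B1kvanishes} is required.
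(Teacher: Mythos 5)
Your proposal is correct and takes essentially the same route as the paper: both reduce to $d>0$ by conjugation, apply the parity obstruction of Corollary \ref{cor:B1kvanishes} to the level-$(d-1)$ conservation law built from $P=b\,u_{d-1}+\ldots\in V_d$ (you via Corollary \ref{cor:VdPhi}, the paper via Lemmas \ref{lem:Existence} and \ref{lem:dphi=Phi} directly), and contradict the nonvanishing of the $u_{d-1}$--coefficient. The only cosmetic difference is that you phrase the contradiction through $A_{u_k}=0$ while the paper uses $B^{1,d-1}(P)=0$ together with the third item of Lemma \ref{lem:lemmaba}; these are equivalent statements.
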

\begin{proof}
For $d\neq 0$, complex conjugation gives an isomorphism $V_d\to V_{-d}$. Thus, it suffices to prove the lemma for positive $d$.

Suppose that $d=2n>0$ and $P=u_{d-1}+\ldots\in V_d$  is a normalized solution.  Then by Lemmas \ref{lem:Existence} and \ref{lem:dphi=Phi} the two--form 
\[
\Phi_P=\eta_0\w \rho_P + P\psi + \sum_{i,j} B^{ij}(P)\eta_i\w \eta_j\in \Omega^2_d(\M{d-1})
\] 
defines a weighted--homogeneous differentiated conservation law.   By Corollary \ref{cor:B1kvanishes} we can conclude that $B^{1,d-1}(P)=0$, which contradicts the third item of Lemma \ref{lem:lemmaba} due to the fact that, if such a $P$ exists, then $P=u_{d-1}+\ldots$.
\end{proof}

\begin{proof}[Proof of Thm \ref{thm:MainTheorem}]
The statements about $V_d$ are exactly Lemma \ref{lem:V0}, Corollary \ref{cor:dimVdbound}, and Lemma \ref{lem:Vdevenvanishes}.  By Corollary \ref{cor:VdPhi} the map $P \to \Phi_P$ is an isomorphism from $V_d$ to $\mcc_{d}$.  By definition the map $\mcc_d \to \mch^1_d$ is an isomorphism and Lemma \ref{lem:dphi=Phi} implies that its inverse is given by exterior differentiation.  The last two items are immediate consequences of the second item of the theorem.
\end{proof}

\section{Potentials satisfying linear second order ODEs}\label{sec:fuu=betaf}
So far, the only assumption on $f$ was that it does not satisfy a linear first--order ODE, i.e.~that $f$ and $f_u$ are linearly independent.\footnote{If $f_u=\beta f$ for some constant $\beta$, then \eqref{eq:fGordon} is the Liouville equation.  It is not hard to check that it has infinitely many classical conservation laws.  It is well known that the Liouville equation is linearizable.  With respect to its role as the Gauss equation for constant mean curvature surfaces with $\epsilon+\delta^2=0$ (see Equation \eqref{eq:Potentials}), the linearizability is equivalent to the existence of the Weierstrass representation.} The following theorem shows that $f$ has to satisfy a linear second--order ODE for higher-order conservation laws to exist.\footnote{It has been suggested to us that the classification of potentials that make equation \eqref{eq:fGordon} an `integrable system' has appeared repeatedly in the literature.  The only articles we have found so far that make such claims are that of Dodd and Bullough \cite{Dodd1977} and {\v Z}iber and {\v S}abat \cite{Ziber1979}. 

In \cite{Ziber1979} they list the equations of the form $u_{xt}=f(u)$ that admit a nontrivial Lie--B\"acklund transformation group. As the article only appears in Russian, we have not been able to study their method.  The article \cite{Ibragimov1980} by Ibragimov and {\v S}abat appears to use a similar method though, and from it we can see that the existence of a nontrivial Lie-B\"acklund group is equivalent to the existence of solutions to the linearized equation (see Section \ref{sec:Conclusion} of the current article for more on this).  We do not know if in \cite{Ziber1979} they prove that there are infinitely many non-trivial solutions or if they only show that for the equations in question some non-trivial solutions do exist.  We should also warn the reader that in his review of \cite{Ibragimov1980} on MathSciNet, Vinogradov claims that the proofs in the article have gaps, as does the theory on which it is based.  

Our work is much closer to the approach in \cite{Dodd1977} where they look for the existence of polynomial conserved quantities, though we don't rely on either \cite{Dodd1977} or \cite{Ziber1979}.  %Furthermore, as we explain in Section \ref{sec:Conclusion}, we think that some of the claims in \cite{Dodd1977} are not accurate.
}

\begin{thm}\label{thm:SecondOrder}Assume that $f$ does not satisfy any linear second order ODE, i.e.~that $f,f_u$ and $f_{uu}$ are linearly independent over $\R$. Then $V_d=0$ for $|d|\geq 2$, i.e.~no higher-order conservation laws occur.
\end{thm}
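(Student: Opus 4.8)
The plan is to reduce to $d>0$ by complex conjugation, which induces an isomorphism $V_d\cong V_{-d}$, and to dispose of even $d$ by invoking Lemma \ref{lem:Vdevenvanishes} (which needs only that $f,f_u$ are independent). The remaining case is odd $d\geq 3$, and I would argue by contradiction. Suppose $P\in V_d$ is nonzero; by Lemma \ref{lem:shapeofP} I may normalize $P=u_{d-1}+Q$, where $Q$ is a weighted-homogeneous polynomial of weighted degree $d$ in $u_0,\dots,u_{d-2}$. Since the only single-variable monomial of weighted degree $d$ is $u_{d-1}$, the polynomial $Q$ carries no linear term, and by Lemma \ref{lem:LinearInU} the top variable $u_{d-1}$ occurs only in the displayed linear term, so $P_{u_{d-1}}=1$. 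Because $P$ depends only on the $u_j$ (Lemma \ref{lem:shapeofP}), the relation $\mce(P)=0$ reads
\[
\mce(P)=-\sum_{l\geq 1}T^l P_{u_{l-1}}-\sum_{l,i}T^l u_{i+1}P_{u_i u_l}+f_u P=0,
\]
using $e_{-1}P=\sum_i u_{i+1}P_{u_i}$ and the fact that $e_{\ol{-1}}$ acts on functions of the $u_j$ as $-\sum_l T^l\partial_{u_l}$.

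The technical heart is a bookkeeping statement about the $T^l$. Writing $T^l=\sum_{m\geq 0}c^l_m\,f^{(m)}$, where $f^{(m)}$ denotes the $m$-th $u$-derivative of $f$, I would show that each $c^l_m$ is a homogeneous polynomial in the $u_i$ of \emph{ordinary polynomial degree exactly} $m$ (and weighted degree $l$). This follows from $T^{l+1}=e_{-1}T^l$: each application of $e_{-1}$ either differentiates $f$ once more in $u$, which multiplies by $u_0$ and raises both $m$ and the polynomial degree by one, or merely shifts an index $u_i\mapsto u_{i+1}$, preserving both. The recursion $c^{l+1}_m=\bigl(\sum_i u_{i+1}\partial_{u_i}\bigr)c^l_m+u_0\,c^l_{m-1}$ makes this precise and shows in particular that every $c^l_m$ has nonnegative coefficients.

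The decisive step is to read off the coefficient of the monomial $u_0u_{d-2}$ (note $d-2\geq 1$, so this is a genuine product of distinct variables) in $\mce(P)=0$. This monomial has polynomial degree $2$, so in each of the three sums it can be produced by an $f^{(m)}$-contribution only if the accompanying polynomial factor $c^l_m\cdot(\mathrm{rest})$ has degree $2$; since $\deg c^l_m=m$, this forces $m\leq 2$. Hence the coefficient of $u_0u_{d-2}$ has the shape $\alpha f+\beta f_u+\gamma f_{uu}$, with \emph{no} higher derivatives of $f$ occurring — this is precisely why the second-order hypothesis is the correct one. I would then check $\gamma\neq 0$: a degree-$2$ $f_{uu}$-term can arise only from $-T^lP_{u_{l-1}}$ with $P_{u_{l-1}}$ a nonzero constant, which (as $u_{d-1}$ is the sole linear monomial of $P$) happens only for $l=d$, contributing $-c^d_2\,P_{u_{d-1}}=-c^d_2$; the $u_0u_{d-2}$ coefficient of $c^d_2$ is a positive integer (in fact $d$) by the recursion above. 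The second sum yields only terms of polynomial degree $\geq 3$ in $f_{uu}$, and $f_uP$ contributes no $f_{uu}$. Thus $\alpha f+\beta f_u-d\,f_{uu}=0$ is a nontrivial linear relation, contradicting the independence of $f,f_u,f_{uu}$.

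The main obstacle is the degree bookkeeping. One must argue cleanly that the polynomial-degree-$2$ monomial $u_0u_{d-2}$ receives no contribution from any $f^{(m)}$ with $m\geq 3$, and that the degree-$2$ $f_{uu}$-contributions do not cancel but collect into the single nonzero coefficient $-d$. Proving the exact polynomial-degree statement $\deg c^l_m=m$ and tracking which summands of the three sums can reach polynomial degree $2$ is where all the care lies; once that is established, the hypothesis that $f,f_u,f_{uu}$ are linearly independent produces the contradiction at once, and the case $d\leq -2$ follows by conjugation.
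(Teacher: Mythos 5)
Your proposal is correct and follows essentially the same route as the paper's own proof: after normalizing $P=u_{d-1}+\dots$ via Lemma \ref{lem:shapeofP} and Lemma \ref{lem:LinearInU}, both arguments extract the coefficient of the monomial $u_0u_{d-2}$ in $\mce(P)=0$ and observe that the only $f_{uu}$-contribution is $\pm d\,f_{uu}$ coming from $T^d P_{u_{d-1}}$, yielding a nontrivial relation among $f,f_u,f_{uu}$. Your bookkeeping via $T^l=\sum_m c^l_m f^{(m)}$ with $\deg c^l_m=m$ is just a systematized version of the paper's direct expansion $T^j=u_{j-1}f_u+ju_{j-2}u_0f_{uu}+\dots$, and your even/odd split (invoking Lemma \ref{lem:Vdevenvanishes}) is harmless but unnecessary, since the monomial argument works for all $d\geq 2$.
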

\begin{proof}
It suffices to prove $V_d=0$ for $d\geq 2$. Assume that $V_d\neq 0$ for some $d\geq 2$, and let $P\in V_d$ be a nonzero element. By Lemma \ref{lem:shapeofP}, $P$ is a polynomial in the variables $u_j$ and can be normalized such that 
\[
P=u_{d-1}+cu_{d-2}u_0+\ldots
\] The polynomial $P$ satisfies \eqref{eq:ED}
\be \label{eq:edinnewlemma}
\sum_{j=0}^d\sum_{i=0}^{d-1} T^j \frac{\partial}{\partial u_j} \left(u_{i+1}P_{u_i}\right) = f_u P.
\ee
Recall from Lemma \ref{lem:tidentities} that $T^j=(e_{-1})^jf$. It follows that $T^0=f$, $T^1=u_0f_u$, $T^2=u_1f_u+u_0^2f_{uu}$ and for $j\geq 3$, 
\[
T^j=u_{j-1}f_u+ju_{j-2}u_0 f_{uu}+\text{terms without }u_{j-1}\text{ and }u_{j-2}.
\]
Therefore, the summands on the left hand side of  \eqref{eq:edinnewlemma} that involve $u_{d-2}$ are
\begin{center}
\begin{tabular}{lll}
$j=0,i=d-3$, if $d\geq 3$ & & $fu_{d-2}P_{u_{d-3},u_0}$\\
$j=1,i=d-3$, if $d\geq 4$ & & $f_uu_{d-2}u_0 P_{u_{d-3},u_1}$\\
$j=1,i=0$ & & $f_u c u_{d-2}u_0$\\
$j=d-1, i=d-2$, if $d\geq 3$ & & $f_u c u_{d-2}u_0$ \\
$j=d, i=d-1$ & & $f_{uu}d u_{d-2}u_0$.
\end{tabular}
\end{center}
It follows that the vanishing of the $u_{d-2}u_0$--coefficient of \eqref{eq:edinnewlemma} contradicts the assumption that $f_{uu}$ is linearly independent from $f$ and $f_u$. Therefore, $V_d=0$. The statement about conservation laws follows from Proposition \ref{prop:NormalForm}.
\end{proof}

On the other hand, in certain cases the upper bound for the dimensions of the spaces of higher-order conservation laws given in Theorem \ref{thm:MainTheorem} is sharp. A slight modification of Proposition 3.1 of \cite{Pinkall1989} provides nontrivial elements of $V_{2n+1}$ when $f_{uu}=\beta f$.  
\begin{lem}\label{lem:PSSolutions}
Suppose that $f_{uu}=\beta f$ with $\beta\neq 0$.  Make the following recursive definitions:
\begin{align*}
P^1&=u_0\\
\phi^i&=\begin{cases} (P^l)^2+2\sum_{j=1}^{l-1} \theta^{j,i-j} & {\rm if} \;\; i=2l-1\\
P^{l+1}P^l+\theta^{l,l}+2\sum_{j=1}^{l-1} \theta^{j,i-j} & {\rm if} \;\; i=2l\end{cases}\\
\theta^{l,m}&=P^l P^{m+1}-e_{-1}(P^{l})e_{-1}(P^m)+\frac{\beta}{4}\phi^l \phi^m \\
P^{i+1}&=e_{-1}e_{-1}P^i-\frac{\beta}{2}u_0 \phi^i.
\end{align*}
Then
\be\label{eq:phiequation} 
\begin{split}
e_{-1}\phi^i&=2u_0 e_{-1}P^i\\
e_{\ol{-1}}\phi^i&=-2f P^i
\end{split}
\ee
and 
\be\label{eq:ed2}
e_{\ol{-1}}e_{-1}P^i=-f_u P^i,
\ee
so $P^i\in V_{2i+1}$.
\end{lem}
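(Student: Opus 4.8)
The plan is to prove the two identities in \eqref{eq:phiequation} together with \eqref{eq:ed2} by a single induction on $i$, and then to read off the membership $P^i\in V_d$ from polynomiality and weighted-degree considerations. Before starting the induction I would record the algebraic infrastructure. An easy induction on the recursion shows that each $P^i$, $\phi^i$ and $\theta^{l,m}$ is a polynomial in the variables $u_j$ alone: $e_{-1}$ sends such polynomials to such polynomials ($e_{-1}u_j=u_{j+1}$), and the defining formulas only combine these by products and the constant $\beta$. Consequently the algebraic constraints $P^i_u=P^i_{u_a,\ol{u_b}}=0$ of \eqref{eq:NoMixing} hold automatically, and a weighted-degree bookkeeping (using $\wt(u_j)=j+1$ and that $e_{-1}$ raises $\wt$ by one) gives $\wt(P^i)=2i-1$, $\wt(\phi^i)=2i$, and $\wt(\theta^{l,m})=2(l+m)$; in particular $P^i$ is weighted-homogeneous of the degree needed to lie in the relevant $V_d$ (the count yields the odd value $d=2i-1$).

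The second piece of infrastructure is the action of the two total derivatives. Both $e_{-1}$ and $e_{\ol{-1}}$ are derivations with $e_{-1}u_j=u_{j+1}$ and $e_{\ol{-1}}u_j=-T^j$ (Lemma \ref{lem:tidentities}); crucially, the hypothesis $f_{uu}=\beta f$ enters through $e_{-1}f=u_0f_u$ and $e_{-1}f_u=u_0f_{uu}=\beta u_0 f$, so that the span of $\{f,f_u\}$ is closed under $e_{-1}$. I would also use that on $\M{\infty}$ the commutator vanishes, $[e_{-1},e_{\ol{-1}}]=0$ (the limiting form of \eqref{eq:Commutator}).

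Before the induction proper I would isolate two reductions that make \eqref{eq:ed2} a free consequence of \eqref{eq:phiequation}. Applying $e_{\ol{-1}}$ to the first equation of \eqref{eq:phiequation} and $e_{-1}$ to the second, subtracting, and using $[e_{-1},e_{\ol{-1}}]=0$ together with $e_{\ol{-1}}u_0=-f$ and $e_{-1}f=u_0f_u$, the terms $-2f\,e_{-1}P^i$ cancel and one is left with $u_0\,e_{\ol{-1}}e_{-1}P^i=-u_0 f_u P^i$; cancelling the coordinate $u_0$ (a nonzerodivisor) yields exactly \eqref{eq:ed2}. A parallel manipulation, starting from $P^{i+1}=e_{-1}e_{-1}P^i-\tfrac{\beta}{2}u_0\phi^i$ and using \eqref{eq:ed2}, \eqref{eq:phiequation} and $e_{-1}f_u=\beta u_0 f$, produces the auxiliary identity $e_{\ol{-1}}P^{i+1}=-f_u\,e_{-1}P^i+\tfrac{\beta}{2}f\phi^i$, which I will need when differentiating the $\theta$'s. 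Thus the genuine content reduces to the two equations of \eqref{eq:phiequation}.

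For these I would run a strong induction on $i$, the engine being two clean one-step identities for $\theta$, namely
\[
e_{-1}\theta^{l,m}=P^l\,e_{-1}P^{m+1}-P^{l+1}\,e_{-1}P^m,\qquad
e_{\ol{-1}}\theta^{l,m}=(e_{\ol{-1}}P^l)\,P^{m+1}-(e_{\ol{-1}}P^{l+1})\,P^m,
\]
each derived from the definition of $\theta^{l,m}$, the recursion $e_{-1}e_{-1}P^l=P^{l+1}+\tfrac{\beta}{2}u_0\phi^l$, and the inductive hypotheses (the first uses the first equation of \eqref{eq:phiequation} at levels $l,m$; the second uses \eqref{eq:ed2}, the second equation of \eqref{eq:phiequation}, and the auxiliary identity). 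Substituting these into $e_{-1}\phi^i$ and $e_{\ol{-1}}\phi^i$, with $\phi^i$ given by its two defining cases, the sums $2\sum_j\theta^{j,i-j}$ \emph{telescope}: all interior terms cancel and only the extreme terms survive (in the even case the extra piece $P^{l+1}P^l+\theta^{l,l}$ also combines cleanly), collapsing $e_{-1}\phi^i$ to $2P^1\,e_{-1}P^i=2u_0\,e_{-1}P^i$ and $e_{\ol{-1}}\phi^i$ to $2(e_{\ol{-1}}P^1)P^i=-2fP^i$, which are precisely \eqref{eq:phiequation}. The base case $i=1$ (where $P^1=u_0$, $\phi^1=u_0^2$) is a direct check. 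The main obstacle is organizational rather than conceptual: verifying the two $\theta$-identities and keeping the nested recursion consistent, in particular handling the odd/even split in the definition of $\phi^i$ and confirming that the telescoping leaves exactly the single boundary term and no residue. Granting \eqref{eq:phiequation} and \eqref{eq:ed2}, the membership $P^i\in V_{2i-1}$ then follows from the polynomiality and weighted-degree facts recorded at the outset.
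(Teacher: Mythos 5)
Your proof is correct and takes essentially the same route as the paper: the paper's (very terse) proof consists precisely of your auxiliary identity $e_{\ol{-1}}P^{l+1}=\tfrac{\beta}{2}f\phi^l-f_u e_{-1}P^l$ and your two formulas for $e_{-1}\theta^{l,m}$ and $e_{\ol{-1}}\theta^{l,m}$, followed by the assertion that induction then gives \eqref{eq:phiequation} and, from it, \eqref{eq:ed2}; your telescoping computation and the commutator/cancel-$u_0$ derivation of \eqref{eq:ed2} are exactly the details the paper suppresses. One remark: your degree count $\wt(P^i)=2i-1$ is the correct one for the recursion as stated (with $P^1=u_0$), so your conclusion $P^i\in V_{2i-1}$ is right, and the lemma's ``$P^i\in V_{2i+1}$'' is an off-by-one indexing slip (the subsequent Example reindexes the $P$'s by weighted degree); this is harmless, since either reading yields a nonzero element of $V_d$ for every positive odd $d$, which is all that Corollary \ref{cor:dimVdspecial} requires.
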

\begin{proof}
We calculate that
\begin{align*}
e_{\ol{-1}}P^{l+1}&=\frac{\beta}{2}f \phi^l-f_ue_{-1}P^l\\
e_{\ol{-1}}\theta^{l,m}&=e_{\ol{-1}}(P^l)P^{m+1}-P^me_{\ol{-1}}P^{l+1}\\
e_{-1}\theta^{l,m}&=P^l e_{-1}P^{m+1}-e_{-1}(P^m)P^{l+1}.
\end{align*}
Using these and induction we can verify the conditions in \eqref{eq:phiequation}.  Using this we can verify that $P^i$ satisfies \eqref{eq:ed2}.   
\end{proof}

\begin{cor}\label{cor:dimVdspecial}
Suppose that $f_{uu}=\beta f$ with $\beta \neq 0$ and that $f$ does not satisfy a first--order ODE.  Then $\dim V_d=1$ for all odd integers $d$.
\end{cor}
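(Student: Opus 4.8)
The plan is to upgrade the inequality $\dim_\C V_d \le 1$ to an equality by producing an explicit nonzero element in each odd $V_d$. Since $f$ does not satisfy a first--order ODE, $f$ and $f_u$ are linearly independent, so Corollary \ref{cor:dimVdbound} applies and gives $\dim_\C V_d \le 1$ for every $d$. It therefore suffices to show $V_d \ne 0$ for each odd $d$, and then the equality $\dim V_d = 1$ follows at once.

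The existence of nonzero solutions is where the hypothesis $f_{uu}=\beta f$ enters. For each positive integer $i$, Lemma \ref{lem:PSSolutions} produces a function $P^i$ with $e_{\ol{-1}}e_{-1}P^i = -f_u P^i$, i.e.\ satisfying \eqref{eq:ed2}; since it is a polynomial in the $u_j$ it automatically meets the conditions defining membership in $V_d$, and as $i$ varies its weighted degrees run through all positive odd integers (the degrees $d=\pm 1$ being in any case already occupied by $u_0$ and $\ub_0$). For the negative odd degrees I would invoke complex conjugation: because $f_u$ is real and $[e_{-1},e_{\ol{-1}}]=0$ on $\M{\infty}$, the operator $\mce$ commutes with conjugation, and $\wt(\ol{P}) = -\wt(P)$, so conjugation is an isomorphism $V_d \to V_{-d}$ (as already noted in the proof of Lemma \ref{lem:Vdevenvanishes}). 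Thus $\ol{P^i}$ furnishes the required nonzero element of each negative odd $V_d$.

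The one genuine point to verify is that the $P^i$ do not vanish, since Lemma \ref{lem:PSSolutions} asserts only that they solve \eqref{eq:ed2} and a priori the recursion could collapse to zero. I would settle this by tracking the highest--weight variable and proving by induction that $P^i = u_{d-1} + (\text{terms in strictly lower } u_j)$, where $d = \wt(P^i)$. The base case is $P^1 = u_0$. For the inductive step, write $P^{i+1} = e_{-1}e_{-1}P^i - \frac{\beta}{2} u_0 \phi^i$; the operator $e_{-1}$ raises weighted degree by one and sends the leading term $u_{d-1}$ to $u_d$, so $e_{-1}e_{-1}P^i$ has leading term $u_{d+1}$. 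On the other hand every monomial of $\phi^i$ is, by its recursive definition, a product of at least two factors of positive weighted degree, so no single variable occurring in $\phi^i$ can carry its full weighted degree; hence the top variable of $u_0\phi^i$ lies strictly below $u_{d+1}$. The leading term of $e_{-1}e_{-1}P^i$ therefore survives, giving $P^{i+1} = u_{d+1} + \cdots \ne 0$ in the claimed normal form. The main obstacle is precisely this leading--term bookkeeping: one must be sure that the nonlinear correction $\frac{\beta}{2}u_0\phi^i$ never reaches the top variable produced by $e_{-1}e_{-1}P^i$, and it is here that the explicit weighted degrees of the auxiliary quantities $\phi^i$ and $\theta^{l,m}$ must be accounted for.
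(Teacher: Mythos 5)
Your proposal is correct and takes essentially the same route as the paper, which obtains the corollary immediately by combining the upper bound of Corollary \ref{cor:dimVdbound} with the existence of the solutions $P^i$ from Lemma \ref{lem:PSSolutions}, using complex conjugation for the negative odd degrees. Your leading--term induction showing $P^i=u_{d-1}+\cdots\neq 0$ fills in a detail the paper leaves implicit (it is recorded only in the normalization remark $P^i=u_{2i}+\ldots$ of the Example following the corollary).
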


\begin{exam}
Note that the generators $P^i$ of $V_{2i+1}$ are normalized so that $P^i=u_{2i}+\ldots$. The first four of them are
\begin{align*}
P^1&=u_0\\
P^3&=u_2-\frac{1}{2} \beta u_0^3\\
P^5&=u_4-\frac52 \beta u_2 u_0^2 - \frac52 \beta u_1^2u_0 + \frac38 \beta^2 u_0^5\\
P^7&=u_6-\frac72 \beta u_4 u_0^2 - 14 \beta u_3u_1u_0 - \frac{21}{2} \beta u_2^2u_0 - \frac{35}{2} \beta u_2^2u_1^2+\frac{35}{8}\beta^2 u_2 u_0^4 + \frac{35}{4} \beta^2 u_1^2 u_0^3 \\
&\qquad\,\,- \frac{5}{16}\beta^3 u_0^7.
\end{align*}
\end{exam}

\begin{exam} In the case that $f_{uu}=\alpha f_u + 2\alpha^2 f$ with $\alpha\neq 0$ a coordinate change transforms \eqref{eq:fGordon} into the Tzitzeica equation $u_{z\bar{z}}=e^u-e^{-2u}$. For $f(u)=e^u-e^{-2u}$, the first $V_{2i+1}$ are as follows:
\begin{align*}
&\dim V_1=1,\qquad u_0\in V_1\\
&\dim V_3=0\\
&\dim V_5=1,\qquad u_4+5u_2u_1-5u_2u_0^2-5u_1^2u_0+u_0^5\in V_5\\
&\dim V_7=1,\qquad u_6+7u_4u_1-7u_4u_0^2+14u_3u_2-28u_3u_1u_0-21u_2^2u_0\\  &\qquad\qquad\qquad\quad\;\;-28u_2u_1^2-14u_2u_1u_0^2+14u_2u_0^4-\frac{28}{3}u_1^3u_0+28u_1^2u_0^3-\frac43 u_0^7\in V_7.
\end{align*}
Of course it would be nice to have a recursion for generators of $V_d$ in this case as well.  Such a recursion almost certainly exists and should be derivable using the work of Guthrie \cite{Guthrie1994}.  However, Dodd and Bullough claim to that there are only finitely many polynomial conserved quantities in this case in \cite{Dodd1977}.    This would be strange since both families of potentials discussed in this section are obtained from primitive maps into $k$-symmetric spaces and thus should have similar theories.  An existence result and method for calculating generators may also be derivable from formal Killing fields by using the approach of Terng and Uhlenbeck in \cite{Terng2000}.   It isn't clear if the remarkable recursive formula presented in \cite{Pinkall1989} and copied here in Lemma \ref{lem:PSSolutions} has an analogue for the Tzitzeica equation or for the other systems of PDEs associated to primitive maps.   
\end{exam}

\begin{rem}
We believe that $f$ must satisfy either $f_{uu}=\beta f$ or $f_{uu}=\alpha f_u + 2\alpha^2 f$ if higher-order conservation laws exist.  It is not hard to show that for there to exist new conservation laws in normal form at the second prolongation,  then $f$ must satisfy $f_{uu}=\beta f$, and for there to exist new conservation laws in normal form at the fourth prolongation then $f$ must satisfy either $f_{uu}=\beta f$ or $f_{uu}=\alpha f_u + 2 \alpha^2 f$.  We do not have a proof that $f(u)$ must satisfy one of these two second-order ODE's in order for higher-order conservation laws to appear at any level.
\end{rem}

\section{Generalized symmetries}\label{sec:Symmetries}
Noether's theorem can be formulated as an isomorphism between the space of proper conservation laws (viewed as elements of the characteristic cohomology) and the space of proper generalized symmetries \cite{Bryant2003}.  See, for example, \cite{Olver1993,Shadwick1980} for related formulations of Noether's theorem. In order to discuss this for the system at hand, we begin by introducing the appropriate class of generalized symmetries. In Lemma \ref{lem:Symmetries} we prove a weaker version of Noether's theorem which has appeared previously using other machinery -- for example, see \cite{Shadwick1980a}.  We end by discussing how generalized symmetries relate to the Jacobi fields of Pinkall and Sterling \cite{Pinkall1989}. 

It is most convenient to study symmetries on $\M{\infty}$.  There we have the following
\begin{defn}
A real vector field $v$ on $\M{\infty}$ is a {\bf generalized symmetry of order $r$} for $(\M{\infty},\mci^{(\infty)})$ if $\mcl_v(\mci^{(l)}) \subset \mci^{(l+r)}$ for all $l \geq 0$.  A {\bf trivial generalized symmetry} for $(\M{\infty},\mci^{(\infty)})$ is a generalized symmetry $v$ that satisfies $v \lhk \I{k}=0$.  
\end{defn}
A natural candidate for a trivial generalized symmetry is $\Re(e_{-1})$ or $\Im(e_{-1})$ since $e_{-1} \lhk \I{\infty}=0$.  We calculate that 
\begin{align*}
 \mcl_{e_{-1}}\eta_l&=\eta_{l+1}\\
\mcl_{e_{-1}}\eta_l&=-\tau^{l-1},
\end{align*}
showing that $e_{-1}$ is an order $1$ generalized symmetry of $(\M{\infty},\mci^{(\infty)})$.  By the observation above it is trivial.  In fact the same is true for $\Re(Q e_{-1})$ for any complex valued function $Q$ on $\M{\infty}$.

\begin{defn}
A {\bf proper} generalized symmetry is a generalized symmetry $v$ also satisfying $v \lhk \zeta=0$.
\end{defn}
In \cite{Bryant2003} the proper generalized symmetries are realized as a quotient of the space of all generalized symmetries.  Using our specified coframe allows us to recognize them as a subspace rather than a quotient. The following lemma has been proven previously in many other contexts.  

\begin{lem}\label{lem:Symmetries}
Let $v$ be a (real) vector field on $\M{\infty}$ such that $\zeta(v)=0$ and let $g=\eta_0(v)$.  Then $v$ is a generalized symmetry of order one of $(\M{\infty},\mci^{(\infty)})$ if and only if $\eta_i(v)=(e_{-1})^{i}(g)$ and $g$ is a solution to Equation \eqref{eq:ED}.
\end{lem}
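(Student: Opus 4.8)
The plan is to characterize generalized symmetries $v$ with $\zeta(v)=0$ by directly computing the action of $\mcl_v$ on the generators $\eta_i$ of the ideal and extracting the conditions under which $\mcl_v \I{(l)} \subset \I{(l+1)}$. Since $v$ is real with $\zeta(v)=\zetab(v)=0$, we may expand it in the dual frame as
\[
v = g\,e_0 + \sum_{i\geq 1}\left(h_i\,e_i + \ol{h_i}\,e_{\ol{i}}\right),
\]
where $g=\eta_0(v)$ is real and $h_i=\eta_i(v)$. The strategy is to use Cartan's formula $\mcl_v\alpha = v\lhk\ed\alpha + \ed(v\lhk\alpha)$ together with the structure equations from the Proposition in Section \ref{sec:EDS}. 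Applying this to $\eta_0$ and demanding that the result lie in the ideal shifted by one will force $h_1 = e_{-1}(g)$; applying it recursively to each $\eta_i$ and insisting membership in $\mci^{(l+1)}$ will propagate the relation $h_{i+1}=e_{-1}(h_i)$, yielding $\eta_i(v)=(e_{-1})^i(g)$ by induction.

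First I would compute $\mcl_v\eta_0$. Using $v\lhk\eta_0 = g$ and the structure equation $\ed\eta_0 = \zeta\w\eta_1+\zetab\w\etab_1$, Cartan's formula gives
\[
\mcl_v\eta_0 = \ed g + v\lhk(\zeta\w\eta_1+\zetab\w\etab_1) = \ed g - h_1\zeta - \ol{h_1}\zetab.
\]
For this to lie in $\I{(1)}$, the coefficients of $\zeta$ and $\zetab$ must vanish, which (writing $\ed g$ in the coframe and using $\zeta(v)=0$) identifies $h_1 = e_{-1}(g)$ and its conjugate. Next I would treat $\mcl_v\eta_i$ for $i\geq 1$ using $\ed\eta_i = -\eta_{i+1}\w\zeta + \tau^{i-1}\w\zetab$; the $\tau^{i-1}$ term, being built from the $\eta_j$, already lies in the ideal, so the genuinely new constraint comes from the $\zeta$-coefficient and forces $h_{i+1}=e_{-1}(h_i)$. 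This gives the recursion establishing $\eta_i(v)=(e_{-1})^i(g)$ for all $i$.

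The final and most delicate step is to show that the order-one symmetry condition is equivalent to $g$ satisfying $\mce(g)=e_{\ol{-1}}e_{-1}g + f_u g = 0$, Equation \eqref{eq:ED}. The plan is to isolate the one remaining obstruction: after the $\eta_i(v)=(e_{-1})^i(g)$ relations are imposed, the only structure equation whose $\mcl_v$-image has not yet been forced into the shifted ideal is the one producing a $\zeta\w\zetab$ term. I expect this to be the main obstacle, since it requires carefully computing that coefficient and recognizing it as $\mce(g)$ up to a nonzero factor; one must use the commutator $[e_{-1},e_{\ol{-1}}]$ acting on functions on $\M{\infty}$ (which vanishes, as noted after Equation \eqref{eq:ED}) to symmetrize the second-order terms. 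The cleanest route is to examine $\mcl_v\eta_1$ (or equivalently the consistency of the recursion at the lowest level), whose $\zetab$-component, after substituting $\eta_1(v)=e_{-1}(g)$ and $\eta_0(v)=g$, yields precisely a multiple of $e_{\ol{-1}}e_{-1}g + f_u g$; its vanishing is then both necessary and sufficient. The converse direction follows by reversing these computations: assuming $\eta_i(v)=(e_{-1})^i(g)$ and $\mce(g)=0$, each $\mcl_v\eta_i$ lands in $\mci^{(i+1)}$, so $v$ is a generalized symmetry of order one.
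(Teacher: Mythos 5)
Your strategy coincides with the paper's --- Cartan's formula plus the structure equations, then reading off $\zeta$- and $\zetab$-coefficients --- and your treatment of $\mcl_v\eta_0$ and of the $\zeta$-coefficient recursion is correct. But your second paragraph contains a concrete error that then leaves half of the lemma unproven. You assert that in
\[
\mcl_v\eta_i=\ed\bigl(\eta_i(v)\bigr)+v\lhk\bigl(-\eta_{i+1}\w\zeta+\tau^{i-1}\w\zetab\bigr)
\]
the contribution of $\tau^{i-1}\w\zetab$ ``already lies in the ideal.'' It does not: since $\zetab(v)=0$, one has $v\lhk(\tau^{i-1}\w\zetab)=\tau^{i-1}(v)\,\zetab$, a function times $\zetab$. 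Hence every $\eta_i$ with $i\geq 1$ imposes, besides the $\zeta$-coefficient recursion $h_{i+1}=e_{-1}(h_i)$, the $\zetab$-coefficient condition
\[
e_{\ol{-1}}(h_i)+\tau^{i-1}(v)=e_{\ol{-1}}(h_i)+\sum_{j=0}^{i-1}\binom{i-1}{j}T^j_u\,h_{i-1-j}=0.
\]
Read literally, your paragraph two would yield no PDE on $g$ at all. Your third paragraph partially repairs this by extracting $\mce(g)=0$ from the $\zetab$-component of $\mcl_v\eta_1$ (note that no structure equation for the $\eta_i$ on $\M{\infty}$ produces a $\zeta\w\zetab$ term; the obstruction lives entirely in the $\zetab$-components just described), but that handles only $i=1$.

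What is missing is exactly the nontrivial half of the lemma: that $\mce(g)=0$ forces the $\zetab$-coefficient conditions for all $i\geq 2$, so that each $\mcl_v\eta_i$ genuinely lands in the ideal. ``Reversing the computations'' does not deliver this, because for $i\geq 2$ the displayed condition is not manifestly a consequence of $\mce(g)=0$; one must prove the identity
\[
e_{\ol{-1}}\bigl((e_{-1})^{i}g\bigr)+\sum_{j=0}^{i-1}\binom{i-1}{j}T^j_u\,(e_{-1})^{i-1-j}(g)=(e_{-1})^{i-1}\bigl(e_{\ol{-1}}e_{-1}g+f_ug\bigr),
\]
which is where the real work lies: it uses $[e_{-1},e_{\ol{-1}}]=0$ on $\M{\infty}$ (the reason the lemma is stated on the infinite prolongation), the fact that $T^j_u=(e_{-1})^j(f_u)$ (from the commutation argument in the proof of Lemma \ref{lem:tidentities}), and the Leibniz rule applied to $(e_{-1})^{i-1}(f_ug)$. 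This identity is precisely how the paper's proof concludes. Without it your argument establishes that $\mce(g)=0$ is necessary, but not that together with the recursion it is sufficient for $v$ to be a generalized symmetry of order one.
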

\begin{proof}
The $\zeta$-coefficient of $\mcl_v(\eta_0)$ is $v^1-e_{-1}(g)$.  Thus the condition that $\mcl_v(\eta_0) \in \I{2}$ implies that $v^1=e_{-1}(g)$.  In general we find that the $\zeta$-coefficient of $\mcl_v(\eta_l)$ is $v^{l+1}-e_{-1}^{l+1}(g)$.  This implies that  $\eta_i(v)=(e_{-1})^{i}(g)$ for all $i \geq 0$. 

Using this, the $\zetab$--coefficient of $\mcl_v(\eta_0)$ is $e_{\ol{-1}}e_{-1}g+f_u g$.  In general, the $\zetab$--coefficient of $\mcl_v(\eta_i)$ is 
\be\nonumber
e_{-1}v^i+\sum_{j=0}^{i-1}{i-1 \choose j}T^{i-1-j}_u v^j.
\ee 
Using $v^j=e_{-1}^{j}(g)$ and $T^i=(e_{-1})^i(f)$ this becomes $(e_{-1})^{i-1}(e_{\ol{-1}}e_{-1}g+f_u g)$.
\end{proof}

To state Noether's theorem in this context we need to recall a standard definition and introduce a refinement of the space of generalized symmetries.
\begin{defn}
If $v$ is a generalized symmetry, then $\eta_0(v)$ is its {\bf generating function}.
\end{defn}
\begin{defn}
Let $\hat \mcs \subset \mcs$ be the subspace of proper generalizaed symmetries $v$ whose generating functions satisfy \eqref{eq:NoMixing}.
\end{defn}
\begin{prop}\us{(}Noether's Theorem\us{)}
There is an isomorphism between $\hat \mcs$ and $\mcc$ given by sending the generating function of a generalized symmetry to the generating function of a conservation law.
\end{prop}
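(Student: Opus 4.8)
The plan is to observe that both $\hat{\mcs}$ and $\mcc$ are parametrized by one and the same space of generating functions, so that the asserted correspondence is simply the composition of the two parametrizations. Let $\mcg$ denote the space of real--valued functions on $\M{\infty}$ satisfying both \eqref{eq:NoMixing} and \eqref{eq:ED}. Every such function is automatically real in the two situations of interest: the generating function of a real symmetry is $\eta_0(v)$ with $\eta_0$ real, while the generating function $A$ of a conservation law in normal form satisfies $\ol A=A$ by Definition \ref{def:normalform}. I would set up two linear generating--function maps, $\hat{\mcs}\to\mcg$ and $\mcc\to\mcg$, show that each is an isomorphism, and compose.

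For the symmetry side I would invoke Lemma \ref{lem:Symmetries} directly. A proper generalized symmetry of order one has $\zeta(v)=0$, and the lemma shows it is completely determined by its generating function $g=\eta_0(v)$ through $\eta_i(v)=(e_{-1})^i(g)$, with $g$ forced to solve \eqref{eq:ED}. Restricting to $\hat{\mcs}$ adds the requirement that $g$ solve \eqref{eq:NoMixing}, so $v\mapsto \eta_0(v)$ lands in $\mcg$ and is injective. For surjectivity, given any $g\in\mcg$ I would define a real vector field by $\zeta(v)=0$ and $\eta_i(v)=(e_{-1})^i(g)$ (with the conjugate components dictated by reality); Lemma \ref{lem:Symmetries} guarantees that this $v$ is an order--one proper generalized symmetry, and since its generating function $g$ satisfies \eqref{eq:NoMixing} it lies in $\hat{\mcs}$. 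Hence $\hat{\mcs}\to\mcg$ is a linear isomorphism.

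For the conservation--law side, Proposition \ref{prop:NormalForm} shows that every $\Phi\in\mcc$ has a real generating function $A\in\mcg$ and that $\rho$ and the $B^{ij}$ --- hence all of $\Phi$ --- are recovered from $A$ by \eqref{eq:rhobyA} and \eqref{eq:BbyA}; this makes $\Phi\mapsto A$ a well--defined injective linear map $\mcc\to\mcg$. Surjectivity is exactly the existence statement proved in Section \ref{sec:Existence}: both $\mcc$ and $\mcg$ split into weighted--homogeneous pieces (after complexification, the pieces of $\mcg$ being the spaces $V_d$), and Theorem \ref{thm:MainTheorem}(2) provides, for each $d$, the inverse isomorphism $V_d\to\mcc_d$, $P\mapsto\Phi_P$, whose generating function is again $P$ by Corollary \ref{cor:VdPhi} and Lemma \ref{lem:dphi=Phi}. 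Assembling over $d$ shows that $\mcc\to\mcg$ is an isomorphism.

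Composing $\hat{\mcs}\overset{\cong}{\to}\mcg\overset{\cong}{\leftarrow}\mcc$ yields the desired linear isomorphism $\hat{\mcs}\cong\mcc$, and by construction it sends the generating function $\eta_0(v)$ of a symmetry to the conservation law with the same generating function, as claimed. The one genuinely substantial ingredient is the surjectivity of $\mcc\to\mcg$, i.e.\ that every solution of \eqref{eq:NoMixing} and \eqref{eq:ED} actually integrates to a closed form; but this is precisely what Section \ref{sec:Existence} establishes, so here it is used as a black box. The remaining work is bookkeeping: checking that the two notions of generating function coincide as elements of $\mcg$, and that both maps respect the reality constraint and the weighted grading.
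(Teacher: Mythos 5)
Your proposal is correct and follows essentially the same route as the paper: the paper's own proof is a one-line appeal to the definitions, Lemma \ref{lem:Symmetries}, and Theorem \ref{thm:MainTheorem}, and your argument simply spells out what that appeal means --- identifying both $\hat\mcs$ and $\mcc$ with the common space of generating functions via Lemma \ref{lem:Symmetries} on the symmetry side and Proposition \ref{prop:NormalForm} together with the existence results of Section \ref{sec:Existence} on the conservation-law side. The added bookkeeping (reality of generating functions, the weighted-homogeneous decomposition into the $V_d$) is consistent with the paper and fills in details the authors left implicit.
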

\begin{proof}
This follows immediately from the definitions, Lemma \ref{lem:Symmetries}, and Theorem \ref{thm:MainTheorem}.
\end{proof}

The central equation to solve in order to produce either generalizaed symmetries or conservation laws is \eqref{eq:ED}.  This equation restricts to any integral manifold of $(M,\mci)$ defined by a solution $u(z,\zb)$ of \eqref{eq:fGordon} to be the linearization of \eqref{eq:fGordon}:
\be\label{eq:Linearization}
A_{z \zb}=f_u A.
\ee 
The Jacobi fields studied in \cite{Pinkall1989} are defined to be solutions to \eqref{eq:Linearization} and thus are generating functions for both proper generalizaed symmetries of $(\M{\infty},\mci^{(\infty)})$ and conservation laws.  This explains the appearance of the canonical Jacobi fields of \cite{Pinkall1989} as generating functions for conservation laws in Lemma \ref{lem:PSSolutions}.  The generating functions for conservation laws/generalized symmetries were produced by Olver \cite{Olver1977} using recursion operators.  However, his treatment is not complete because, in the case of the $\sin$-Gordon equation, he doesn't prove that the recursion operator can be applied indefinitely to generate the full infinite sequence.   More rigorous treatments have since been given by Guthrie \cite{Guthrie1994}, Dorfman \cite{Dorfman1993}, and Sanders and Wang \cite{Sanders2001}.  The methods in  \cite{Dorfman1993} and \cite{Sanders2001} are specifically for evolution equations of the form $u_t=K(u)$, where $K$ depends on $u$ and its derivatives with respect to the other independent variables.  The treatment in \cite{Guthrie1994} is more general. One can also obtain the conservation laws for the hyperbolic case with $ f(u)=-\frac{1}{4}\sin(u)$ using the minus one flow in the work of Terng and Uhlenbeck \cite{Terng2000}.  Presumably the conservation laws studied in the present article are equivalent to those derived by Ward \cite{Ward1995}, but this is not clear to us.

\section{Concluding Remarks}\label{sec:Conclusion}
We end this article with a number of observations.  We begin with some issues internal to the theory of characteristic cohomology.  

The spectral sequence machinery used in Section \ref{sec:FirstApproximation} to get a first approximation to the space of conservation laws is extremely useful.  Without it, one has a bewildering freedom in the choice of a representative which will not be easy to deal with.  However, as we found in what is probably the simplest nontrivial class of elliptic equations, the machinery of Section \ref{sec:FirstApproximation} and the calculations of Section \ref{sec:AlgebraicForm} still leave one with some very difficult equations to verify, even once the generating functions are found.  This suggests that the use of the gauge symmetry (Section \ref{sec:Homogeneous} and in particular Equation \eqref{eq:phiexplicit}) in order to produce a direct relationship between solutions to the linearized equation and undifferentiated conservation laws may prove extremely useful, if not essential, in proving the existence of conservation laws for more complicated EDS.  

We have begun exploring this for more complicated systems such as special Lagrangian $3$--folds in $\C^3$, special Legendrian $3$--folds in $\s{7}$, and the EDS for constant mean curvature surfaces in $3$--dimensional space forms.  In each of these cases, a gauge symmetry allows one to find a direct relationship between solutions of the linearized system and undifferentiated conservation laws.  However, one is still left with the formidable challenge of finding solutions to the linearized equation.  For surface geometries, recursion relations for Killing fields appear to be useful, but for higher dimensional submanifold geometries there is no theory of formal/polynomial Killing fields.  It is unclear how to produce canonical solutions to the linearized equation for these higher dimensional systems.  Adapting the theory of recursion operators \cite{Dorfman1993, Guthrie1994} to this context seems essential to developing a complete theory of exterior differential systems with infinitely many higher--order conservation laws.
 
A characteristic property of integrable equations is that they belong to a hierarchy of higher commuting flows \cite{Wilson1979,Terng2000}.  These higher commuting flows can be understood as a canonical sequence of solutions to the linearization of the original equation.  As described by Mukai--Hidano and Ohnita \cite{Mukai2006}, the Killing fields of harmonic (or primitive) map systems are solutions to the linearization of the harmonic map equation.   The framework of characteristic cohomology and the work in \cite{Pinkall1989} suggest that these Killing fields are canonically defined objects on an appropriate jet space that one may restrict to any solution.  It is not clear how to prove this in general  though.  It would be particularly interesting to develop an approach that could work for integral manifolds with any topology.  Integrable systems approaches to harmonic maps with higher genus domains have begun to appear \cite{Mukai2006,Gerding2009}.  Though at present the only approach to higher genus surface geometries (that don't have a Weierstrass representation) that has born fruit has been through gluing constructions using geometric analysis \cite{Kapouleas1990,Haskins2007}. 

Pinkall and Sterling \cite{Pinkall1989} use the canonical Jacobi fields to define a notion of finite type solution.  In the context of harmonic or primitive maps into homogeneous spaces this has been generalized using the notion of formal and polynomial Killing fields \cite{Burstall1993}.  One can use conservation laws to define a notion of finite type solutions which, in the case at hand, recovers the notion defined by Pinkall and Sterling.  We will expand upon this and the relationship between formal/polynomial Killing fields and conservation laws in a forthcoming article. 
\bibliographystyle{amsplain}
\bibliography{bibliography}
\end{document}